\def\R{\mathbb R}
\def\T{\mathbb T}
\def\Z{\mathbb Z}
\def\S{\mathbb S}
\numberwithin{equation}{section}
\newtheorem{theorem}{Theorem}
\newtheorem{lemma}[theorem]{Lemma}
\newtheorem{proposition}[theorem]{Proposition}
\newtheorem{definition}[theorem]{Definition}
\newtheorem{remark}[theorem]{Remark}
\numberwithin{theorem}{section}
\begin{document}

\title[Almost prescribing scalar curvature by mixed convex integration]{Almost prescribing scalar curvature\\ by mixed convex integration}

%%%%%%%%%%%%%%%%%%%%%%%%%%%%%%%%%%%%%%%%%%%%%%%%%%%%%%%%%%%%%%%%%%%

\author{F.~Aliouane}
\address{Université Mohammed Seddik Benyahia, LAOTI, Département de mathématiques, faculté des Sciences Exactes et Informatique, BP 98, Ouled Aissa, 18000, Jijel, Algeria}\email{f{\_}aliouane@univ-jijel.dz}

\author{L.~Rifford}
\address{Universit\'e C\^ote d'Azur, CNRS, Labo. J.-A. Dieudonn\'e,  UMR CNRS 7351, Parc Valrose, 06108 Nice Cedex 02, France \& AIMS Senegal, Km 2, Route de Joal, Mbour, Senegal} \email{ludovic.rifford@math.cnrs.fr}

\author{M.~Theillière}
\address{Université de Rennes, CNRS, IRMAR - UMR 6625, F-35000 Rennes, France}\email{melanie.theilliere@univ-rennes.fr}

\date{}

%\subjclass[2010]{Primary 53C17; 53A99; Secondary 32S65; 32B20; 28A75}

%%%%%%%%%%%%%%%%%%%%%%%%%%%%%%%%%%%%%%%%%%%%%%%%%%%%%%%%%%%%%%%%%%

\maketitle

\begin{abstract}
We introduce a method of mixed convex integration and demonstrate its suitability for solving a particular class of semilinear second-order partial differential relations. As an application, we provide a new proof of a result on scalar curvature originally established by Lohkamp.
\end{abstract}

%\tableofcontents

\section{Introduction}
Let $\pi: E \rightarrow M$ be a smooth fiber bundle, where $E$ is the total space and $M$ is the base manifold. A section $s:M\rightarrow E$ corresponds to a smooth map such that $\pi(s(x))=x$ for all $x\in M$. The jet bundle $J^k(E)$ of order $k$ is the bundle whose fiber over a point $x\in M$ consists of the equivalent classes of sections $s$ that agree up to order $k$ at $x$. A partial differential relation in this setting is a subset 
\[
\mathcal{R} \subset J^k(E),
\]
which imposes constraints on the $k$-jets of sections of $E$. A section $s:M \rightarrow E$ is a solution of the partial differential relation $\mathcal{R}$ if its $k$-jet $J^k_s$ satisfies
\[
J^k_s(x) \in \mathcal{R} \qquad \forall x \in M.
\]
This general framework encompasses a wide variety of partial differential relations arising in differential geometry. For further details, we refer the reader to Gromov's monograph \cite{gromov86}, as well as the books by Spring \cite{spring98} and Eliashberg and Mishachev \cite{em02}. Examples of first-order partial differential relations include various types of immersions between smooth manifolds, while second-order relations concern, for instance, free maps or Riemannian metrics satisfying specific curvature conditions. Several techniques have been developed to establish the existence of solutions to partial differential relations, sometimes at the cost of topological assumptions. Among these, the theory of convex integration -- developed by Gromov \cite{gromov73,gromov86}, following earlier work by Nash \cite{nash54} -- provides a method for resolving partial differential relations satisfying ampleness conditions. Nevertheless, some existence results are beyond the scope of currently available methods, such as those in Lohkamp's theory of negative Ricci or scalar curvature \cite{lohkamp94,lohkamp95,lohkamp99}. The aim of the present paper is to show how some of Lohkamp's results on scalar curvature can be recovered using a method of mixed convex integration.\\

Let $(M,g)$ be a smooth Riemannian manifold of dimension $n \geq 2$. In \cite{lohkamp95}, Lohkamp demonstrated that for $n\geq 3$, if $g_0$ is a smooth Riemannian metric on $M$, then for every $\alpha\in \R$ and every $\epsilon>0$, there exists a smooth Riemannian metric $g_{\epsilon}$ on $M$ such that
\[
r(g_{\epsilon}) < \alpha \quad \mbox{and} \quad \left\| g_{\epsilon} - g_0 \right\|_{C^0_g} < \epsilon,
\]
where $ r(g_{\epsilon})$ denotes the Ricci curvature of $g_{\epsilon}$ and $\|g_{\epsilon} - g_0\|_{C^0_g}$ is the $C^0$-norm of $g_{\epsilon}-g_0$ on $M$ with respect to the reference metric $g$. This result improved a previous result  by the same author in~\cite{lohkamp94}, which established the existence of smooth Riemannian metrics with negative Ricci curvature on any smooth manifold of dimension at least $3$. It is worth noting that, by the Gauss-Bonnet theorem, this result does not extend to dimension $2$. Lohkamp's proof is highly technical. It relies on a covering argument that superposes many metrics on $M$ derived from a perturbation of the Euclidean metric in $\R^n$, which is identical to the Euclidean metric outside the unit ball but exhibits negative Ricci curvature within the ball. Subsequently, Lohkamp refined his result in the case of scalar curvature. In \cite{lohkamp99}, he demonstrated the following result:

\begin{theorem}\label{THMScalar}
Let $(M,g)$ be a smooth Riemannian manifold of dimension $n\geq 3$, then for every smooth Riemannian metric $g_0$ on $M$, every smooth function $k: M \rightarrow (0,+\infty)$, and every $\epsilon >0$,  there exists a smooth Riemannian metric $g_{\epsilon}$ on $M$ satisfying 
\[
\mbox{\rm Scal}^{g_{0}}-k - \epsilon < \mbox{\rm Scal}^{g_{\epsilon}}<\mbox{\rm Scal}^{g_{0}}-k \quad \mbox{and} \quad \left\| g_{\epsilon} -g_0\right\|_{C^0_g} < \epsilon.
\]
\end{theorem}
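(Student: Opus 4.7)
The plan is to derive Theorem \ref{THMScalar} from the general mixed convex integration result that this paper develops for semilinear second-order partial differential relations. The first step is to identify a PDR on the space of Riemannian metrics whose solutions satisfy the desired double scalar curvature inequality, and to verify that this relation fits into the semilinear framework amenable to the paper's main technique.

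Concretely, I would write $g_{\epsilon} = g_0 + h$ for a symmetric $(0,2)$-tensor $h$ with $|h|_g < \epsilon$, and expand $\mbox{Scal}^{g_0+h}$ as an expression involving the partial derivatives of $h$ up to order two. The essential feature is that the dependence on the second derivatives of $h$ is affine, since the leading second-order piece is a linear combination of the type $\Delta_{g_0}(\mbox{tr}_{g_0} h) - \mbox{div}_{g_0}\mbox{div}_{g_0} h$, which is precisely the semilinear second-order structure that the paper's main tool is designed to handle. The PDR to be solved is
\[
\mathcal{R} = \bigl\{ j^2_x h \,:\, \mbox{Scal}^{g_0}(x) - k(x) - \epsilon < \mbox{Scal}^{g_0+h}(x) < \mbox{Scal}^{g_0}(x) - k(x), \ |h|_g(x) < \epsilon \bigr\},
\]
and because the admissible scalar curvature window is a nonempty open interval in $\R$ at each point, the fibers of $\mathcal{R}$ over the first-order jet should carry the convex-hull / ampleness-type property required to run the mixed convex integration procedure.

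The second step is to exhibit a formal solution, that is, a (non necessarily holonomic) section of $J^2(S^2_+T^*M)$ taking values in $\mathcal{R}$. Taking the zero-th and first order parts equal to $0$ and choosing a pointwise second-derivative datum in the affine preimage of a prescribed value close to $-k(x)-\epsilon/2$ produces such a section: the affinity of the scalar curvature map in the second jet, combined with a standard partition of unity, yields a smooth formal section lowering the symbolic scalar curvature by approximately $k$ everywhere. The third, central step is to convert this formal section into an honest smooth solution by invoking the mixed convex integration theorem of the paper. The output should be a genuine symmetric tensor $h$ whose $C^0$-norm stays below $\epsilon$ and whose second derivatives, averaged along carefully chosen rapid oscillation directions, reproduce the prescribed scalar curvature drop.

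The main obstacle, and the reason why the classical convex integration of \cite{gromov86} does not suffice here, is that $\mbox{Scal}^{g_0+h}$ contains genuinely nonlinear (quadratic) terms in the first derivatives of $h$; such terms generate unwanted and potentially positive contributions whenever one creates large second derivatives through high-frequency oscillations, and these contributions could push $\mbox{Scal}^{g_\epsilon}$ outside the target open window. The \emph{mixed} aspect of the method is precisely what must tame these lower-order perturbations: it presumably superposes the usual convex integration averaging (which handles the semilinear second-order part) with a correction or secondary oscillation that absorbs the quadratic gradient terms, thereby ensuring the resulting metric lies strictly in the prescribed interval and not only in an averaged sense. Once this local conversion step is established, Theorem \ref{THMScalar} follows by a standard covering and iteration argument, exploiting the openness of $\mathcal{R}$ to preserve the strict double inequality under finitely many successive perturbations.
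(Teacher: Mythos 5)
Your proposal correctly identifies the high-level strategy of the paper -- re-express the scalar curvature constraint as a semilinear second-order partial differential relation and solve it by mixed convex integration -- and your diagnosis of the obstruction (quadratic gradient terms created by high-frequency oscillations) is accurate. However, there are two genuine gaps, plus a structural difference worth flagging.

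\textbf{First gap: the convexity condition is not automatic.} You claim that because the target scalar curvature window is a nonempty open interval, the relation ``should carry the convex-hull / ampleness-type property required to run the mixed convex integration procedure.'' This does not follow from openness. As explained in Remark~\ref{REMInterpretation}, in the relevant coordinates the relation reduces to a level set $\{H = F\}$ of the \emph{convex} function $H(v_1,w_1,w_2)=2(w_1+w_2+v_1^2)$, and the origin lies in the convex hull of this level set if and only if $F \geq 0$. The theorem asks to \emph{lower} the scalar curvature by $k+\bar\epsilon$ with $k>0$, which gives $F = \tilde k + \bar\epsilon >0$; if one tried to raise the scalar curvature by the same amount, the convex-hull condition would fail even though the relation would still be open. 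So the positivity of $k$ is an essential geometric hypothesis, and your argument does not account for why it is used.

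\textbf{Second gap: the covering/iteration step is not standard and is where most of the work lives.} You state that the theorem follows from the local conversion ``by a standard covering and iteration argument.'' This is not the case here. The mixed corrugation process in Proposition~\ref{PROPmixedapp} is formulated on a torus (or thick torus) -- the construction needs a distinguished periodic direction along which to oscillate -- and patching such perturbations across overlapping charts is exactly where naive covering arguments break down, since the high-frequency oscillations produced in neighboring charts would generically collide. Section~\ref{SEC4} resolves this by a triangulation, an induction on simplex dimension with a cascade of thickness parameters $\alpha > (\beta+\alpha)/2 > (3\beta+\alpha)/4 > \beta$, a codimension-2 singular set $\Sigma^k\subset D^k$ built in Lemma~\ref{LEMToricDiscs}, and the thick-torus statement (Proposition~\ref{PROPThickTori}) that allows the metric to be frozen outside a compact set. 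Without this architecture there is no way to ensure the strict global double inequality survives the iteration.

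\textbf{A structural difference.} You parametrize the perturbation as $g_\epsilon = g_0 + h$ with $h$ a general symmetric $(0,2)$-tensor. The paper instead perturbs by conjugating a diagonal factor $D=\mathrm{diag}(1,e^{2h_2},e^{2h_3},1,\ldots,1)$ by a $g_0$-orthonormal frame $P_x$, with the explicit requirement that $X^1$ be aligned with $e_1$ (see Propositions~\ref{PROPTnGen} and~\ref{PROPThickTori}). This is not a cosmetic choice: it reduces the unknown to two scalar functions $(h_2,h_3)$ and forces the symbol $L(\sigma_0) = -2\alpha_{\sigma_0}^2(1,1)$, so that condition~(L2) of Proposition~\ref{PROPmixedapp} -- that the loop $\gamma_x$ land in $\ker L$ -- becomes the simple algebraic constraint $(\gamma_x)_1+(\gamma_x)_2=0$, solvable with a single cosine. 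With a general $h$ you would first have to identify a compatible kernel direction for $L$ and prove the analogue of Lemma~\ref{PROPTnGenLEM}; this is doable in principle but is a nontrivial step your proposal leaves implicit.
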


It is important to note that the results of Lohkamp in \cite{lohkamp95,lohkamp99} are, in fact, stronger than those stated here. For instance,  in \cite{lohkamp99}, he showed that the new metric $g_{\epsilon}$ can be chosen to coincide with $g_0$ on a closed subset of $M$ while maintaining the same control on its scalar curvature as described in Theorem \ref{THMScalar} outside that set. The proof proceeds by superposing several small deformations of the initial metric, utilizing a suitable covering of the manifold. \\

The purpose of this paper is to develop a method of mixed convex integration and use it to provide a new proof of Theorem \ref{THMScalar}. The partial differential relation given by the scalar curvature does not fit within the general framework of convex integration mentioned above. Specifically,  it is not ample, as the scalar curvature tensor is affine in second-order partial derivatives. Our proof of Theorem \ref{THMScalar} exploits this property, along with the specific structure of first-order terms. It follows from a general method (see Proposition \ref{PROPmixedapp}) that allows solving a particular class of semilinear second-order partial differential relations. This mixed convex integration method relies on the corrugation process introduced by the third author in \cite{theilliere22}, which provides an alternative formula to those previously used in convex integration. It allows the perturbation to be split into several terms, each of which has a more easily controlled effect on the partial derivatives of the perturbation (see Proposition \ref{PROPmixed}). \\

The paper is organized as follows: Section \ref{SECmixed} introduces the general method of mixed convex integration and demonstrates its application for solving a particular class of second-order semilinear partial differential relations. Section \ref{SECAlmost} is dedicated to proving  Theorem \ref{THMScalar} in the case of a torus. For pedagogical clarity, we first address the perturbation of the flat metric on the torus, then explain how to handle a general metric on the torus. Section \ref{SEC4} provides the proof of Theorem \ref{THMScalar} in its general form. Finally, the Appendix includes the formulas of Ricci and scalar curvature in local coordinates, along with the proof of a technical result. \\

\medskip
\noindent
\textbf{Acknowledgment:} The third author was partially supported by the Luxembourg National Research Fund (FNR) O21/16309996/HypSTER, and then benefits from the support of the French government “Investissements d’Avenir” program integrated to France 2030, bearing the following reference ANR-11-LABX-0020-01.

\section{Mixed convex integration}\label{SECmixed}

The purpose of this section is to introduce a method of mixed convex integration taylored to solving a particular class of semilinear second-order partial differential relations. Instead of considering jet bundles between general manifolds, we focus on jets bundles from $\R^n$ to $\R^m$, allowing certain periodicity properties. This level of generality is sufficient for our application to Theorem \ref{THMScalar}. Throughout this section, we fix two integers $n$ and $m$ with $n,m\geq 1$.

\subsection{A few notations}

Given an integer $d$ such that $0\leq d\leq n$, and a set $K \subset \R^{n-d}$, we denote by $C^{\infty}_{per,d}(K \times \R^{d},\R^m)$ the set of smooth functions $f:K \times \R^{d} \rightarrow \R^m$ satisfying 
\[
f\left(x_1, \ldots, x_{n-d}, x_{n-d+1}+k_1, \ldots x_n+k_d\right) = f\left(x_1,  \ldots x_n\right) \qquad \forall x \in \R^n, \quad \forall k \in \Z^d,
\]
where $x=(x_1, \ldots, x_n)$ and $k=(k_1,\ldots, k_d)$. If $K$ is not open, a function $f:K \times \R^{d} \rightarrow \R^m$ is smooth if it extends smoothly to an open neighborhood of $K$. When $d=0$, the space $C^{\infty}_{per,d}(K \times \R^{d},\R^m)$ coincides with the set $C^{\infty}(K,\R^m)$ of smooth functions from $K \subset \R^n$ to $\R^m$. When $d=n$, it coincides with $C^{\infty}_{per}(\R^{n},\R^m)$, the set of smooth functions from $\R^n$ to $\R^m$ which are $\Z^n$-periodic. In all cases, $C^{\infty}_{per,d}(K \times \R^{d},\R^m)$ consists of smooth functions defined on a subset of $\R^n$ with coordinates $(x_1,\ldots,x_n)$ to $\R^m$ with coordinates $(y_1, \ldots,y_m)$. For any smooth function $f$ defined on a subset of $\R^n$ and valued in $\R^m$, we denote by $\partial_if$ and $\partial_{ij}^2f$ (with $i,j=1, \ldots,n$) the first and second-order partial derivatives of $f$ with respect to $x_i$ and $x_i,x_j$, respectively. If $i=j$ we sometimes use the notation $\partial^2_if$.

\subsection{Mixed corrugation processes}

Recall that $C^{\infty}_{per}(\R, \R^{m})$ denotes the set of smooth $1$-periodic functions from $\R$ to $\R^m$.

\begin{definition}[Integral-loop operator]
The integral-loop operator 
\[
\mbox{\rm Int} \, : \, C^{\infty}_{per}(\R, \R^{m}) \, \longrightarrow \, C^{\infty}_{per}(\R, \R^{m})
\]
is defined for every $\gamma \in C^{\infty}_{per}(\R, \R^{m})$ by
\[
\mbox{\rm Int} (\gamma)(t) := \int_0^t \left( \gamma(s)-\overline{\gamma}\right) \, ds \quad \forall t \in \R \quad \mbox{with} \quad \overline{\gamma} := \int_0^1 \gamma(s) \, ds,
\]
and for every positive integer $n$ we denote by $\mbox{\rm Int}^n= \mbox{\rm Int} \circ \cdots \circ \mbox{\rm Int}$ the mapping corresponding to $n$ iterations of $\mbox{\rm Int}$.
\end{definition}

The integral-loop operator defines a linear operator from the vector space $C^{\infty}_{per}(\R, \R^{m})$ into itself, it is the building block of corrugation processes, the deformation technique used in convex integration. \\

Fix an integer $d$ with $0\leq d\leq n$ and a compact set $K \subset \R^{n-d}$, and define $E:=K\times \R^d$. Given $f\in C^{\infty}_{per,d}(E,\R^m)$ and $\{\gamma_x\}_{x\in E}$ a well-adapted family of loops valued in $\R^m$, that is, of the form $\{\gamma_x:=\gamma(x,\cdot)\}_{x\in E}$ for some $\gamma \in C^{\infty}_{per,d+1}(K\times \R^{d+1},\R^m)$, we define, for any positive integers $N, b$, and any $i\in \{1, \ldots, n\}$, the smooth function $F\in C^{\infty}_{per,d}(E,\R^m)$ by 
\[
F(x) := f(x) + \frac{1}{N^b} \, \mbox{Int}^b \left( \gamma_x\right) \left( Nx_i \right) \qquad \forall x =\left( x_1, \ldots, x_n \right) \in K.
\]
The function $F$, said to be obtained by a corrugation process from $f$ for $\partial_i^b$, tends to $f$ in $C^0$-topology as $N \rightarrow +\infty$, its derivatives of order strictly less than $b$ tend to the corresponding derivatives of $f$, and the derivative of order $b$ along the $x_i$ variable are asymptotically controled by that of $f$ and $\gamma$ (see Proposition \ref{PROPmixed} (i)-(iii) below with $\delta_x\equiv 0$). We now combine corrugation processes at orders one and two. The following result describes the properties of smooth functions obtained through this mixed corrugation process. 

\begin{proposition}\label{PROPmixed}
Let $f\in C^{\infty}_{per,d}(E,\R^m)$ and $ \{\gamma_x\}_{x\in E}$, $\{\delta_x\}_{x\in E}$ be two well-adapted smooth families of loops valued in $\R^m$. For any positive integer $N$ and any $i\in \{1, \ldots, n\}$, define the function $F\in C^{\infty}_{per,d}(E,\R^m)$ by
\begin{eqnarray}\label{PROPmixedEQ}
\qquad F(x) := f(x) + \frac{1}{N} \, \mbox{\rm Int} \left( \gamma_x\right) \left( Nx_i \right) + \frac{1}{N^2} \, \mbox{\rm Int}^2 \left( \delta_x\right) \left( Nx_i \right) \qquad \forall x =\left( x_1, \ldots, x_n \right) \in E.
\end{eqnarray}
Then, the following properties are satisfied:
\begin{itemize}
\item[(i)] $\|F-f\|_{C^0} = O(1/N)$ (as $N \rightarrow + \infty$).
\item[(ii)] For all $j \in \{1, \ldots,n \}$ with $j\neq i$, $\|\partial_{j}F-\partial_{j}f\|_{C^0} = O(1/N)$.
\item[(iii)] For all $j,k \in \{1, \ldots,n \}$ with $j,k\neq i$, $\|\partial_{jk}^2F-\partial_{jk}^2f\|_{C^0} = O(1/N)$.
\end{itemize}
Moreover, the following properties are satisfied for any $x\in E$, with $O(1/N)$ that is uniform with respect to $x\in E$:
\begin{itemize}
\item[(iv)] $\partial_iF(x)=\partial_if(x)+ \gamma_x(Nx_i) - \overline{\gamma_x} +O(1/N)$.
\item[(v)] $\partial_i^2 F(x)=\partial_i^2 f(x)+ 2 \left( \partial_i \gamma_x(Nx_i) - \overline{(\partial_i \gamma_x)}\right) + N \dot{\gamma}_x(Nx_i) + \delta_x(Nx_i) - \overline{\delta_x} + O(1/N)$, \\
where $\dot{\gamma}_x$ stands for  the derivative of $\gamma_x$ with respect to the periodic parameter of the loop.
\item[(vi)] For all $j \in \{1, \ldots,n \}$ with $j\neq i$, $\partial_{ij}^2 F(x)=\partial_{ij}^2 f(x)+  \partial_j \gamma_x(Nx_i) - \overline{(\partial_j \gamma_x)}  + O(1/N)$.
\end{itemize}
\end{proposition}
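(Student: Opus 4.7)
The plan is to introduce the shorthand $\Gamma(x,t):=\mbox{Int}(\gamma_x)(t)$ and $\Delta(x,t):=\mbox{Int}^2(\delta_x)(t)$, so that \eqref{PROPmixedEQ} reads $F(x)=f(x)+\frac{1}{N}\Gamma(x,Nx_i)+\frac{1}{N^2}\Delta(x,Nx_i)$. The whole proof is then a careful bookkeeping exercise combining the chain rule with three identities coming straight from the definition of $\mbox{Int}$: $\partial_t\Gamma(x,t)=\gamma_x(t)-\overline{\gamma_x}$, $\partial_t\Delta(x,t)=\mbox{Int}(\delta_x)(t)-\overline{\mbox{Int}(\delta_x)}$, and $\partial_t^2\Delta(x,t)=\delta_x(t)-\overline{\delta_x}$. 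Using that $\mbox{Int}(\gamma)$ is $1$-periodic in $t$ (a direct consequence of subtracting the mean), the compactness of $K$ together with the smoothness and periodicity of $\gamma,\delta$ ensures that $\Gamma,\Delta$ and all their partial derivatives in either slot are uniformly bounded on $E\times\R$; I will use this freely.

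For items (i)--(iii), no differentiation is ever applied in the $x_i$-direction, so the chain rule only brings out bounded quantities multiplied by the prefactors $1/N$ or $1/N^2$. Every such contribution is $O(1/N)$ in $C^0$-norm, which proves the three statements.

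For (iv)--(vi), the chain rule produces factors of $N$ that partially cancel those prefactors. Writing $\partial_1,\partial_2$ for the partial derivatives of $\Gamma,\Delta$ in their first and second slots, one has $\partial_i[G(x,Nx_i)]=\partial_1G(x,Nx_i)+N\,\partial_2G(x,Nx_i)$ for $G\in\{\Gamma,\Delta\}$. A single application of this rule yields
\[
\partial_iF(x)=\partial_if(x)+\frac{1}{N}\partial_1\Gamma+\partial_2\Gamma+\frac{1}{N^2}\partial_1\Delta+\frac{1}{N}\partial_2\Delta,
\]
where every term on the right is evaluated at $(x,Nx_i)$. The identity $\partial_2\Gamma=\gamma_x-\overline{\gamma_x}$ isolates the main term of (iv), the remaining three being $O(1/N)$ by the uniform bounds. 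Applying a further $\partial_j$ with $j\neq i$ only differentiates in the first slot (no extra factor of $N$ appears), and the unique surviving contribution of size $O(1)$ is $\partial_j\partial_2\Gamma=\partial_j\gamma_x-\overline{\partial_j\gamma_x}$, which proves (vi).

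For (v) one applies $\partial_i$ to the formula above and tracks how each summand $\frac{1}{N^a}\partial_1^b\partial_2^cG(x,Nx_i)$ splits into $\frac{1}{N^a}\partial_1^{b+1}\partial_2^cG+\frac{1}{N^{a-1}}\partial_1^b\partial_2^{c+1}G$. A direct count of powers of $N$ shows that the only contributions of order at least $O(1)$ are: differentiating $\partial_2\Gamma$ via $\partial_2$, which produces $N\dot\gamma_x(Nx_i)$; differentiating $\frac{1}{N}\partial_1\Gamma$ via $\partial_2$ and $\partial_2\Gamma$ via $\partial_1$, which combine into $2(\partial_i\gamma_x-\overline{\partial_i\gamma_x})$; and differentiating $\frac{1}{N}\partial_2\Delta$ via $\partial_2$, which equals $\partial_2^2\Delta=\delta_x-\overline{\delta_x}$. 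All other terms are $O(1/N)$ by the uniform bounds. The main obstacle in the proof is precisely this bookkeeping: one must verify that the expected cancellations from the $1/N^a$ prefactors do take place except for exactly the listed surviving terms, and that every remainder is $O(1/N)$ uniformly on $E$, which follows from the compactness of $K$ and the smoothness and periodicity of the data.
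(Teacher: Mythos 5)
Your proof is correct and follows essentially the same approach as the paper: express $F$ via the auxiliary functions $\mbox{Int}(\gamma_x)(Nx_i)$ and $\mbox{Int}^2(\delta_x)(Nx_i)$, apply the chain rule observing that each differentiation in the $x_i$-direction via the second slot produces an extra factor of $N$, use the identities $\partial_t\mbox{Int}(\alpha)(t)=\alpha(t)-\overline{\alpha}$ repeatedly, and invoke uniform boundedness (from compactness of $K$ and periodicity) to control all remainders. The paper simply writes out the resulting explicit formulas for $\partial_i F$ and $\partial_i^2 F$ rather than organizing the computation through a power-of-$N$ count, but the content is the same.
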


\begin{proof}[Proof of Proposition \ref{PROPmixed}]
To prove (i), we observe that for every $x\in E$, we have 
\[
F(x) - f(x) = \frac{1}{N} \, \mbox{\rm Int} \left( \gamma_x\right) \left( Nx_i \right) + \frac{1}{N^2} \, \mbox{\rm Int}^2 \left( \delta_x\right) \left( Nx_i \right).
\]
By compactness and periodicity, the functions $(x,t) \in K \times \R^{d+1}\mapsto \gamma_x(t)$ and  $(x,t) \in K \times \R^{d+1} \mapsto \delta_x(t)$ are bounded. Consequently, the functions $(x,t) \in K \times \R^{d+1}\mapsto \mbox{Int}(\gamma_x)(t)$ and $(x,t) \in K \times \R^{d+1}\mapsto \mbox{Int}(\delta_x)(t)$ are also bounded. Thus, we deduce that $\|F-f\|_{C^0} = O(1/N)$.

To prove properties (ii)-(v), we note that, if $\{\alpha_x\}_{x\in K}$ is a smooth family of loops then 
\begin{eqnarray}\label{derivative_loop}
\partial_j \left( \mbox{Int} \left(\alpha_x\right) (t)\right) =  \mbox{Int} \left(\partial_j \alpha_x\right) (t) 
\qquad \mbox{and} \qquad 
\partial_{jk} \left( \mbox{Int} \left(\alpha_x\right) (t)\right) =  \mbox{Int} \left(\partial_{jk} \alpha_x\right) (t) 
\end{eqnarray}
$\forall x \in E, \, \forall t \in \R, \, \forall j,k\in \{1,\ldots,n\}$.

To prove (ii), we fix $j\in \{1,\ldots,n\}$ with $j\neq i$ , and evaluting $\mbox{Int}( \gamma_x)$ and $\mbox{Int}^2( \delta_x)$ in \eqref{derivative_loop} at $t=Nx_i$, whose derivative in $\partial_j$ is zero, we obtain 
\[
\partial_j F(x) - \partial_j f(x) = \frac{1}{N} \, \mbox{\rm Int} \left(\partial_j\gamma_x\right) \left( Nx_i \right) + \frac{1}{N^2} \, \mathrm{Int}^2 \left(\partial_j \delta_x\right) \left( Nx_i \right) \qquad \forall x \in E.
\]
Since the functions $(x,t) \mapsto \mbox{Int}( \partial_j \gamma_x)(t)$ and $(x,t) \mapsto \mbox{Int}^2 \left( \partial_j \delta_x\right)(t)$ are bounded over $K\times \R^{d+1}$, by the same argument as in (i), we infer that $\|\partial_{j}F-\partial_{j}f\|_{C^0} = O(1/N)$.

To prove (iii), we fix $j,k \in \{1, \ldots,n \}$ with $j,k\neq i$. In the same way as in (ii), as $j,k\neq i$, we have for every $x\in E$
\[
\partial_{jk}^2 F(x) - \partial_{jk}^2 f(x) = \frac{1}{N} \, \mbox{\rm Int} \left(\partial_{jk}^2 \gamma_x\right) \left( Nx_i \right) + \frac{1}{N^2} \, \mathrm{Int}^2 \left(\partial_{jk}^2 \delta_x\right) \left( Nx_i \right) \qquad \forall x \in E.
\]
We conclude as before. 

To prove (iv) and (v), we use again the formula \eqref{derivative_loop}, but this time, as we consider $t=Nx_i$, and take derivatives in $\partial_i$, we have an additional term. We obtain for every $x\in E$, 
\begin{multline*}
\partial_i F(x)
= \partial_i f(x) + \gamma_x(Nx_i)-\overline{\gamma_x} \\
+\frac{1}{N} \, \mbox{Int} \left( \partial_i\gamma_x\right) \left( Nx_i \right) + \frac{1}{N} \left( \mbox{Int} \left(\delta_x\right) \left( Nx_i \right) - \overline{\mbox{Int} (\delta_x)} \right) + \frac{1}{N^2} \, \mbox{Int}^2 \left(\partial_i \delta_x\right) \left( Nx_i \right)
\end{multline*}
and
\begin{multline*}
\partial_i^2 F(x) = \partial_i^2 f(x) + N\dot{\gamma}_x \left(Nx_i\right) +2\left( \left(\partial_i\gamma_x\right)\left(Nx_i\right)-\overline{ \partial_i\gamma_x} \, \right) +\frac{1}{N} \, \mbox{Int} \left( \partial_i^2\gamma_x\right) \left( Nx_i \right)\\
 + \delta_x \left(Nx_i\right) - \overline{\delta_x} + \frac{2}{N}\left( \mbox{Int} \left(\partial_i\delta_x\right) \left(Nx_i\right) - \overline{\mbox{Int} \left(\partial_i\delta_x\right)} \right) + \frac{1}{N^2} \, \mbox{Int}^2 \left(\partial_i^2 \delta_x\right) \left( Nx_i \right).
\end{multline*}
We conclude as above. Finally, assertion (vi) follows by taking the partial derivative with respect to $x_j$ of the above formula for $\partial_i F$.
\end{proof}

\subsection{Semilinear second-order partial differential relations}\label{SECsemilinear}

As before, we fix an integer $d$ with $0\leq d\leq n$ and a compact set $K \subset \R^{n-d}$, and define $E:=K\times \R^d$.  A partial differential relation $\mathcal{R}$ in $J^2(E,\R^m)$ is a subset of
\[
J^2(E,\R^m) = E \times \R^m \times \left(\R^{m}\right)^n \times \left(\R^{m}\right)^{\frac{n(n+1)}{2}}
\]
 and a smooth function $f\in C^{\infty}_{per,d} (E,\R^m)$ is a solution of $\mathcal{R}$ if its $2$-jet satisfies
\[
J^2_f(x) :=  \left(x,f(x), \left(\partial_1 f(x), \cdots, \partial_nf(x)\right), \left(\partial_{ij}^2f(x)\right)_{1\leq i\leq j\leq n}\right) \in \mathcal{R} \qquad \forall x \in E.
\]
Our goal is to show how the mixed corrugation process introduced in the previous section can be used to solve a particular class of semilinear second-order partial differential relations. For simplicity, we focus on partial differential relations that are solvable by a mixed corrugation process along the $x_1$ variable.\\

To isolate the components of the $2$-jets corresponding to derivatives along the $x_1$ variable, we write the elements of $J^2(E,\R^m)$ as
\[
\sigma = \left(\sigma_0,\sigma^0, \sigma^1_1, \cdots, \sigma^1_n, \left(\sigma^2_{ij}\right)_{1\leq i\leq j\leq n}\right). 
\]
We denote by $\check{\sigma}$ the tuple obtained from $\sigma$ by removing the terms $\sigma_0$, $\sigma^1_1$ and $\sigma^2_{1j}$ for $j=1,\ldots,n$, and define $\check{J}^2(E,\R^m)$ as the set of all such tuples $\check{\sigma}$ with $\sigma \in J^2(E,\R^m)$. Finally, we use the notation $\sigma^2_{1*}$ to refer to the tuple of all components of the form $\sigma^2_{1j}$ with $j=2, \ldots,n$.

\begin{definition}\label{DEFRsemilinear}
A partial differential relation $\mathcal{R} \subset J^2(E,\R^m)$ is said to be semilinear in $x_1$ if there are two smooth maps $L:E \rightarrow (\R^m)^*$ and  $R : E \times \check{J}^2(E,\R^m) \times \R^m \times (\R^m)^{n-1} \rightarrow \R$ satisfying 
\[
\left\{
\begin{array}{l}
L\left(\sigma_0+ (0_{n-d},k)\right) = L\left(\sigma_0\right)\\
R\left(\sigma_0+ (0_{n-d},k), \check{\sigma}, \sigma_1^1, \sigma^2_{1*} \right) = R\left(\sigma_0, \check{\sigma}, \sigma_1^1, \sigma^2_{1*} \right)
\end{array}
\right.
\qquad \forall \left(0_{n-d},k\right)\in \{0\}\times \Z^d
\]
for all $\sigma_0 \in E$, $\check{\sigma}\in \check{J}^2(E,\R^m)$, $\sigma_1^1 \in \R^m$, and $\sigma^2_{1*} \in  (\R^m)^{n-1}$, such that
\[
\mathcal{R} := \Bigl\{  \sigma \in J^2(E,\R^m) \, \vert \,  L\left(\sigma_0\right) \cdot \sigma^2_{11} +   R\left(\sigma_0, \check{\sigma}, \sigma_1^1, \sigma^2_{1*} \right)=0 \Bigr\}.
\]
For every $\epsilon>0$, we call $\epsilon$-thickening of $\mathcal{R}$ the open partial differential relation  $\mathcal{R}_{\epsilon} \subset J^2(E, \R^m)$ defined by 
\[
\mathcal{R}_{\epsilon} := \Bigl\{  \sigma \in J^2(E, \R^m) \, \vert \, L\left(\sigma_0\right) \cdot \sigma^2_{11} +  R\left(\sigma_0, \check{\sigma}, \sigma_1^1 , \sigma^2_{1*}\right) \in (-\epsilon, \epsilon) \Bigr\}.
\]
\end{definition}

The reason for choosing to include $\check{\sigma}, \sigma_1^1 , \sigma^2_{1*}$ in the definition of $R$ is justified by the following result, whose proof follows from Proposition \ref{PROPmixed}.

\begin{proposition}\label{PROPmixedapp}
Let $\mathcal{R} \subset J^2(E, \R^m)$ be a semilinear (in $x_1$) partial differential relation as in Definition \ref{DEFRsemilinear}, and let $f \in C^{\infty}_{per,d}(E,\R^m)$. Assume that there are  two well-adapted smooth families of loops $\{\gamma_x\}_{x\in E}$ and $\{\delta_x\}_{x\in E}$ valued in $\R^m$ satisfying the following properties:
\begin{itemize}
\item[(L1)] For every $x\in E$, $\overline{\gamma}_x=\overline{\delta}_x=0$;
\item[(L2)] For every $x\in E$ and every $t\in \R$, $L(x) \cdot \gamma_x(t) = \partial_1L(x) \cdot \gamma_x(t)= 0$. 
\item[(L3)]  For every $x\in E$ and every $t\in \R$, 
\[
L (x) \cdot \left(\sigma_{11}^2 + \delta_x(t)\right) +  R\left(x,\check{\sigma}, \sigma_1^1(x) +\gamma_x(t), \sigma^2_{1*} +\partial_{*} \gamma_x(t)  \right) = 0,
\]
where $\sigma:= J^2_f(x)$ and $\partial_{*} \gamma_x(t)$ is the tuple $(\partial_2\gamma_x(t), \cdots, \partial_n\gamma_x(t))$.
\end{itemize}
then, for every $\epsilon >0$, the smooth function $F\in C^{\infty}_{per,d} (E, \R^m)$ given by formula (\ref{PROPmixedEQ}) satisfies the following properties for $N$ sufficiently large:
\begin{itemize}
\item[(a)] $\|F-f\|_{C^0} <\epsilon$;
\item[(b)] For every $x\in E$, $J^2_F(x) \in \mathcal{R}_{\epsilon}$.
\end{itemize}
\end{proposition}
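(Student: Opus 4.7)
The plan is to apply Proposition~\ref{PROPmixed} to read off the $2$-jet of $F$ at each $x\in E$, substitute it into the defining expression $L(x)\cdot \sigma^2_{11} + R(x,\check{\sigma},\sigma^1_1,\sigma^2_{1*})$ of $\mathcal{R}$, and show that conditions (L1)--(L3) force this expression to be $O(1/N)$ uniformly in $x$. Property (a) is then immediate from Proposition~\ref{PROPmixed}(i), and property (b) follows by choosing $N$ large.

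For (b), I first record the components of $J^2_F(x)$. Proposition~\ref{PROPmixed}(i)--(iii) show that $\check{J}^2_F(x)-\check{J}^2_f(x) = O(1/N)$. Condition (L1) gives $\overline{\gamma_x}=\overline{\delta_x}=0$, and for $j\neq 1$ interchanging $\partial_j$ with integration over the loop parameter yields $\overline{\partial_j \gamma_x}=0$; hence (iv), (v), (vi) simplify to
\begin{align*}
\partial_1 F(x) &= \partial_1 f(x) + \gamma_x(Nx_1) + O(1/N),\\
\partial^2_{1j} F(x) &= \partial^2_{1j} f(x) + \partial_j \gamma_x(Nx_1) + O(1/N) \quad (j\neq 1),\\
\partial_1^2 F(x) &= \partial_1^2 f(x) + 2\,\partial_1 \gamma_x(Nx_1) - 2\,\overline{\partial_1 \gamma_x} + N\,\dot{\gamma}_x(Nx_1) + \delta_x(Nx_1) + O(1/N).
\end{align*}

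The main obstacle is the apparently divergent term $N\,\dot{\gamma}_x(Nx_1)$ in the last line, which is precisely what condition (L2) is engineered to neutralize after pairing with $L(x)$: differentiating the identity $L(x)\cdot \gamma_x(t)\equiv 0$ with respect to $t$ gives $L(x)\cdot \dot{\gamma}_x(t)\equiv 0$, while differentiating it with respect to $x_1$ and using $\partial_1 L(x)\cdot \gamma_x(t)=0$ from (L2) gives $L(x)\cdot \partial_1 \gamma_x(t)\equiv 0$; averaging the latter over the loop also yields $L(x)\cdot \overline{\partial_1 \gamma_x}=0$. These identities collapse the $\partial_1^2 F$ contribution to
\[
L(x)\cdot \partial_1^2 F(x) = L(x)\cdot \partial_1^2 f(x) + L(x)\cdot \delta_x(Nx_1) + O(1/N).
\]

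For the nonlinear $R$ term, I would invoke uniform continuity: compactness of $K$, periodicity, and the smoothness of $f$, $\gamma$, $\delta$ force the argument of $R$ appearing in $J^2_F(x)$ to remain in a fixed compact subset of its domain, on which $R$ is uniformly Lipschitz. Hence
\[
R\bigl(x,\check{J}^2_F(x),\partial_1 F(x),\partial^2_{1*} F(x)\bigr) = R\bigl(x,\check{J}^2_f(x),\partial_1 f(x)+\gamma_x(Nx_1),\partial^2_{1*} f(x)+\partial_*\gamma_x(Nx_1)\bigr) + O(1/N).
\]
Adding this to the previous display and applying condition (L3) at $t=Nx_1$ with $\sigma=J^2_f(x)$ cancels the principal terms, leaving
\[
L(x)\cdot \partial_1^2 F(x) + R\bigl(x,\check{J}^2_F(x),\partial_1 F(x),\partial^2_{1*} F(x)\bigr) = O(1/N)
\]
uniformly in $x\in E$. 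For $N$ sufficiently large this quantity is bounded in absolute value by $\epsilon$, which yields $J^2_F(x)\in \mathcal{R}_\epsilon$ for every $x\in E$.
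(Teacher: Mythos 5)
Your proof is correct and follows essentially the same route as the paper: read the $2$-jet of $F$ off Proposition~\ref{PROPmixed}, use (L1) to drop the loop averages, differentiate (L2) in $t$ and $x_1$ to kill the $N\dot{\gamma}_x$ and $\partial_1\gamma_x$ contributions after pairing with $L(x)$, control the $R$-term by compactness/periodicity, and close with (L3) at $t = Nx_1$. The only cosmetic difference is that the paper discards $\overline{\partial_1\gamma_x}$ directly (it already vanishes by differentiating $\overline{\gamma_x}\equiv 0$ from (L1)), whereas you keep it and annihilate it via $L(x)\cdot\overline{\partial_1\gamma_x}=0$; both are fine.
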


\begin{proof}[Proof of Proposition \ref{PROPmixedapp}]
Let $\mathcal{R}$ and $f$ be as in the statement, and let $\{\gamma_x\}_{x\in E}$ and $\{\delta_x\}_{x\in E}$  be two well-adapted smooth families of loops valued in $\R^m$ satisfying (L1)-(L3). Assertion (a) follows from Proposition \ref{PROPmixed} (i). To prove (b), fix $x\in E$ and set 
\[
\sigma:= J_f^2(x) \quad \mbox{and} \quad \theta:= J_F^2(x). 
\]
Assertions (ii)-(iii) of Proposition \ref{PROPmixed} yield
\[
\check{\theta} = \check{\sigma} + O(1/N),
\]
where $O(1/N)$, here and throughout, is uniform with respect to $x \in E$. Moreover, assertions (iv)-(vi) of Proposition \ref{PROPmixed}, combined  with (L1), give
\begin{eqnarray*}
\left\{
\begin{array}{rcl}
\theta_1^1 = \partial_1F(x) & = & \sigma_1^1+ \gamma_x\left(Nx_1\right)  +O(1/N)\\
\theta_{11}^2 =\partial_1^2 F(x) & =& \sigma_{11}^2 + 2 \partial_1 \gamma_x\left(Nx_1\right) + N \dot{\gamma}_x \left(Nx_1\right) + \delta_x\left(Nx_1\right) + O(1/N)\\
\theta_{1j}^2  = \partial_{ij}^2 F(x) & = & \sigma_{1j}^2+  \partial_j \gamma_x(Nx_1)  + O(1/N), \, \forall j=2, \ldots,n.
\end{array}
\right.
\end{eqnarray*}
The derivation of the first equality of (L2) with respect to $t$ and $x_1$, along with its second term, gives
\[
L(x) \cdot  \dot{\gamma}_x (t) = L(x) \cdot  \partial_1 \gamma_x(t)  = 0 \qquad \forall t \in \R.
\]
As a result, by setting $\theta^2_{1*}:=(\theta^2_{12}, \cdots, \theta^2_{1n})$, we have
\begin{multline*}
L (x) \cdot  \theta_{11}^2 + R \left(x, \check{\theta}, \theta_1^1, \theta^2_{1*} \right) = L(x) \cdot  \left( \sigma_{11}^2 + \delta_x\left(Nx_1\right) + O(1/N) \right) \\
+ R \left(x, \check{\sigma} + O(1/N), \sigma_1^1+ \gamma_x\left(Nx_1\right)  +O(1/N), \sigma^2_{1*}+ \partial_{*} \gamma_x\left(Nx_1\right)  + O(1/N)\right).
\end{multline*} 
Next, observe that, by the periodicity properties of $L$ and the compactness of $K$, 
\[
L(x) \cdot \left( \sigma_{11}^2 + \delta_x\left(Nx_1\right) + O(1/N) \right) = L(x) \cdot \left( \sigma_{11}^2 + \delta_x\left(Nx_1\right)  \right) + O(1/N),
\]
and, by the periodicity properties of $f$, $\{\gamma_x\}_{x\in E}$, $\{\delta_x\}_{x\in E}$ and $R$, and the compactness of $K$, that
\begin{multline*}
 R \left(x, \check{\sigma} + O(1/N), \sigma_1^1+ \gamma_x\left(Nx_1\right)  +O(1/N),  \sigma^2_{1*}+ \partial_{*} \gamma_x\left(Nx_1\right) +O(1/N)\right) \\
 =  R \left(x, \check{\sigma}, \sigma_1^1+ \gamma_x\left(Nx_1\right),  \sigma^2_{1*}+ \partial_{*} \gamma_x\left(Nx_1\right) \right) + O(1/N).
\end{multline*}
By (L3), it follows that 
\[
L (x) \cdot \theta_{11}^2 + R \left(x, \check{\theta}, \theta_1^1, \theta^2_{1*} \right)=O(1/N),
\]
which completes the proof of (b).
\end{proof}

\subsection{Geometrical interpretation of Proposition~\ref{PROPmixedapp}}\label{subsection:GeomInterpretation}

The constructions of solutions in classical convex integration theory relies on a single family of loops that satisfy a specific property, which can be interpreted geometrically. Here, we provide the geometric interpretation related to the existence of the loops families $\{\gamma_x\}_{x\in E}$ and $\{\delta_x\}_{x \in E}$ from Proposition~\ref{PROPmixedapp}.\\

Let us consider the framework of Proposition~\ref{PROPmixedapp} and assume, in addition, that $\mathcal{R}$ does not depend upon $\sigma_{1*}^2$ and that $L$ is constant. The condition (L2) is equivalent to the requirement that, for any $x\in E$, the image of $\gamma_x$ lies within the vector subspace $\mbox{Ker}(L)\subset \R^m$. The condition (L3) involves finding two well-adapted smooth families of loops, $\{\gamma_x\}_{x\in E}$ and $\{\delta_x\}_{x\in E}$, such that for each $x\in E$, the map  $t\mapsto (\gamma_x(t), \delta_x(t))$ takes values in the set
\begin{eqnarray*}
\mathcal{R}_{\sigma, \partial_1, \partial^2_1} := \Bigl\{(v,w)\in \R^m \times \R^m \;|\; L \cdot \left(\sigma_{11}^2 + w\right)  +   R\left(x,\check{\sigma}, \sigma_1^1(x) + v \right) = 0 \Bigr\}.
\end{eqnarray*}
Furthermore, the condition (L1) imposes a constraint on the averages of the maps $t\in \R\mapsto (\gamma_x(t), \delta_x(t))$ for $x\in E$. In particular, the assumptions of Proposition~\ref{PROPmixedapp} cannot be satisfied if the convex hull of the set $(\mbox{Ker}(L) \times \R^m ) \cap \mathcal{R}_{\sigma, \partial_1, \partial^2_1} \subset \R^m \times \R^m$ does not contain the origin. This geometric interpretation will be discussed through a specific example in Remark~\ref{REMInterpretation}.

\section{Almost prescribing scalar curvature on tori}\label{SECAlmost}

The purpose of this section is to explain how techniques of mixed convex integration can be employed to construct smooth metrics with almost prescribed curvature on tori of dimension at least $3$. For clarity, we begin with the case of a perturbation of the flat metric on $\T^n$. We then address the case of a general metric on $\T^n$. Finally, we describe how to modified the proof of the latter to construct a metric with almost prescribed scalar curvature on a thick torus. This result will play a key role in the proof of Theorem \ref{THMScalar}.

\subsection{Perturbations of the flat metric}\label{subsection:perturb_flat_metric}

We prove Theorem \ref{THMScalar} here in the case where $g_0=g$ is the flat metric induced by the Euclidean metric on $\T^n=\R^n/\Z^n$. In this setting, we have $\mbox{Scal}(g_0)=0$.

\begin{proposition}\label{PROPTnFlat}
Assume that $n\geq 3$. Then, for every smooth function $k:\T^n \rightarrow (0,+\infty)$ and every $\epsilon >0$, there exists a smooth Riemannian metric $g_{\epsilon}$ on $\T^n$ satisfying 
\[
-k-\epsilon < \mbox{\rm Scal}^{g_{\epsilon}} < -k \quad \mbox{and} \quad \left\| g_{\epsilon}-g_0 \right\|_{C^0_g} < \epsilon.
\]
\end{proposition}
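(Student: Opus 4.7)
My plan is to apply Proposition~\ref{PROPmixedapp} with the reference section $f = g_0$ (the flat metric, viewed as a constant $\Z^n$-periodic smooth map $\R^n \to \R^m$, where $m = n(n+1)/2$, with values in the cone of positive-definite symmetric matrices), perturbing along the $x_1$ direction. The coordinate formula for scalar curvature recalled in the appendix is affine in the second partial derivatives $\partial^2 g$ with coefficients depending on $g$, and quadratic in the first partial derivatives $\partial g$. Freezing the coefficient of $\partial_1^2 g$ at $g_0$ produces a constant covector $L \in (\R^m)^*$, which a direct computation identifies with the linear form $H \mapsto -\sum_{k\geq 2} H_{kk}$ on symmetric matrices. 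The target partial differential relation is
\[
\mathcal{R} := \Bigl\{\sigma \in J^2(\T^n,\R^m) \,\Big|\, L \cdot \sigma^2_{11} + R\bigl(x, \check\sigma, \sigma^1_1, \sigma^2_{1*}\bigr) + k(x) + \tfrac{\epsilon}{2} = 0\Bigr\},
\]
where $R$ absorbs the remainder of the scalar curvature formula; this is semilinear in $x_1$ in the sense of Definition~\ref{DEFRsemilinear}, and its $(\epsilon/2)$-thickening will capture $\mbox{Scal}(g) \in (-k - \epsilon, -k)$.

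The loops are constructed as follows. In dimension $n \geq 3$, $\mbox{Ker}(L)$ contains the symmetric matrix $A := E_{22} - E_{33}$ (no analog exists for $n = 2$, consistent with the Gauss-Bonnet obstruction). Fix such an $A$, a smooth mean-zero $1$-periodic function $\phi : \R \to \R$ normalized so that $\overline{\phi^2} = 1$, and set
\[
\gamma_x(t) := \lambda(x)\, A\, \phi(t),
\]
for an amplitude $\lambda \in C^\infty(\T^n, (0,+\infty))$ to be tuned. Condition (L1) holds since $\overline{\phi} = 0$, and (L2) holds by $A \in \mbox{Ker}(L)$ and constancy of $L$. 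A direct computation combining the quadratic $\Gamma\Gamma$ contributions to Ricci (which yield $+\tfrac{1}{2}(\partial_1 h_{22})^2$ pointwise) with the leading expansion of $g^{-1}$ (which yields $h_{22}\,\partial_1^2 h_{22}$ pointwise), and using the identity $\overline{\,\mbox{Int}(\phi)\cdot\dot\phi\,} = -\overline{\phi^2}$, shows that the $t$-average of $R$, evaluated at the $2$-jet $\sigma = J^2_{g_0}(x)$ and at $(\gamma_x(t), \partial_*\gamma_x(t))$, equals $k(x) + \tfrac{\epsilon}{2} - \tfrac{1}{2}\lambda(x)^2$. Setting $\lambda(x)^2 := 2(k(x) + \epsilon/2)$ (smooth and positive since $k > 0$) makes this average vanish, and $\delta_x(t)$ can then be defined pointwise in $t$ as a scalar multiple of a fixed vector $v$ with $L \cdot v = 1$ that makes (L3) hold identically; by construction $\overline{\delta_x} = 0$, so (L1) holds for $\delta$ as well.

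Proposition~\ref{PROPmixedapp} then yields, for $N$ sufficiently large, a smooth $g_\epsilon := F$ that is $\epsilon$-close to $g_0$ in $C^0$ (hence positive definite, so a genuine Riemannian metric) with $2$-jet in $\mathcal{R}_{\epsilon/2}$, translating into the desired bound $-k - \epsilon < \mbox{Scal}^{g_\epsilon} < -k$. I expect the main obstacle to be the sign determination of the averaged quadratic form on $\mbox{Ker}(L)$: the $\Gamma\Gamma$ piece (positive) and the $g^{-1}$-expansion piece (negative) come with opposite signs, and one must verify that their combination is strictly negative on some nonzero element of $\mbox{Ker}(L)$. This is precisely where the hypothesis $n \geq 3$ is indispensable. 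A secondary technical point to handle carefully is the packaging into $R$ of the cross-term coming from the drift of the true $\partial_1^2 g$-coefficient away from the frozen $L$, so that the semilinear framework applies cleanly and the $(\epsilon/2)$-thickening faithfully encodes the targeted scalar curvature window.
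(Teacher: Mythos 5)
The loops you construct are essentially the right ones, and the averaged computation you carry out --- combining the quadratic $\Gamma\Gamma$ contribution with the $g^{-1}$-expansion contribution via $\overline{\mbox{Int}(\phi)\dot\phi} = -\overline{\phi^2}$, then tuning $\lambda$ so the total balances $k+\epsilon/2$ --- matches, after a change of variable, the paper's choice $(\gamma_x)_1(t) = \sqrt{\tilde{k}(x)+\bar\epsilon}\cos(2\pi t)$. The gap is in the invocation of Proposition~\ref{PROPmixedapp}. You take the unknown to be the full symmetric matrix $g_{ij}$ (so $m = n(n+1)/2$) and ``freeze'' the coefficient of $\partial_{11}^2 g_{ip}$ at its value for $g_0$ to obtain a constant $L$, intending to push the discrepancy into $R$. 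But the discrepancy $(L_g - L_{g_0})\cdot\sigma^2_{11}$ depends on $\sigma^2_{11}$, and Definition~\ref{DEFRsemilinear} requires $R$ to be a function only of $(\sigma_0, \check{\sigma}, \sigma^1_1, \sigma^2_{1*})$, none of which contain $\sigma^2_{11}$. So the relation you write is not the scalar curvature relation, and you cannot absorb the drift into $R$; nor can you discard it, because it is $O(1)$ under the corrugation ($g - g_0 = O(1/N)$ multiplied against the leading term $N\dot\gamma_x(Nx_1)$ of $\partial_{11}^2 g$). Condition (L3) of Proposition~\ref{PROPmixedapp}, as stated, does not see this $O(1)$ piece, so $J^2_F \in \mathcal{R}_{\epsilon/2}$ for the frozen relation does \emph{not} imply $\mbox{Scal}^{g_\epsilon}\in(-k-\epsilon,-k)$. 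The identity you invoke in the averaging step is precisely what quantifies the size of the dropped term; it shows the drift is not small, not that it vanishes.

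The paper sidesteps this entirely by a change of variable. It does not perturb $g_{ij}$ directly: it sets $\tilde{g}_{22} = e^{2h_2}$, $\tilde{g}_{33} = e^{2h_3}$, keeps every other entry of the flat metric equal to $\delta_{ij}$, and takes the unknown to be $F = (h_2, h_3) \in C^\infty_{per}(\R^n, \R^2)$, so $m = 2$. The diagonal-metric formula in Appendix~\ref{APPgdiag} then shows the coefficient of $\partial_1^2 h_2$ and of $\partial_1^2 h_3$ in $\mbox{Scal}^{\tilde g}$ is exactly $-2$, independent of both $h$ and $x$: the relation is genuinely semilinear with $L = -2(1,1)$ constant and $f \equiv 0$, and Proposition~\ref{PROPmixedapp} applies as stated. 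The paper then takes $(\gamma_x)_1 = -(\gamma_x)_2$ (the $\mbox{Ker}(L)$ direction, your $A$), a cosine loop of amplitude $\sqrt{\tilde{k}(x)+\bar\epsilon}$, $(\delta_x)_2 = 0$, and $(\delta_x)_1$ forced by (L3), with $\overline{(\delta_x)_1}=0$ from $\int_0^1\cos^2(2\pi t)\,dt = \tfrac12$. The substantive idea missing from your proposal is precisely this logarithmic reparametrization: it is what makes the $\partial_1^2$-coefficient constant, kills the drift, and places the relation inside the framework for which Proposition~\ref{PROPmixedapp} was actually proved.
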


\begin{proof}[Proof of Proposition \ref{PROPTnFlat}]
Let $k:\T^n \rightarrow (0,+\infty)$ be a smooth function, and let $\epsilon >0$ be fixed. Set $\bar{\epsilon}:= \epsilon/2$. Denote by $\tilde{g}_0$ and $\tilde{k}$ the lifts of $g_0$ and $k$ to $\R^n$ as $\Z^n$-periodic metrics and functions, respectively. We consider a perturbation $\tilde{g}$ of $\tilde{g}_0$ whose matrix $(\tilde{g}_{ij})$ has the form ($\delta_{ij}$ denotes the Kronecker symbol)
\[
\left\{
\begin{array}{l}
\tilde{g}_{ij}  = \left(\tilde{g}_0\right)_{ij} = \delta_{ij} \mbox{ for } (i,j) \in \{1,\ldots,n\}^2 \setminus \{(2,2),(3,3)\}\\
\tilde{g}_{ii}  = e^{2h_i} \left(\tilde{g}_0\right)_{ii}= e^{2h_i} \mbox{ for } (i,j) \in \{(2,2),(3,3)\},
\end{array}
\right.
\]
where $h_2, h_3: \R^n \rightarrow \R$ are two smooth $\Z^n$-periodic functions. The scalar curvature of $\tilde{g}$ is given by the expression (see Appendix \ref{APPgdiag})
\begin{multline*}
\mbox{Scal}^{\tilde{g}} =  - 2 \Bigl(   \partial_{1}^2 h_2  +   \partial_{1}^2 h_3  + \left( \partial_{1}h_2 \right)^2 +  \left( \partial_{1}h_3 \right)^2 + \partial_{1} h_2 \, \partial_1 h_3\Bigr)\\
 -2e^{-2h_2} \Bigl( \partial_2^2 h_3 -\partial_2h_2 \, \partial_2h_3 +(\partial_2h_3)^2\Bigr) -2e^{-2h_3} \Bigl( \partial_3^2 h_2 -\partial_3h_3 \, \partial_3h_2 +(\partial_3h_2)^2\Bigr)\\
 - 2 \sum_{i=4}^n \Bigl( \partial_i^2h_2+\partial_i^2h_3 + (\partial_ih_2)^2 + (\partial_ih_3)^2 - \partial_ih_2 \, \partial_ih_3 \Bigr).
\end{multline*}
Consequently, the desired result follows, by setting $h_2=F_1$ and $h_3=F_2$, if we show that there is a function $F=(F_1,F_2)\in C^{\infty}_{per}(\R^n,\R^2)$ such that 
\[
\left\| F\right\|_{C^0} < \bar{\epsilon} \quad \mbox{and} \quad  J^2_F(x) \in \mathcal{R}_{\bar{\epsilon}},
\]
where $\mathcal{R}\subset J^2(\R^n,\R^2)$ is the semilinear (in $x_1$) partial differential relation  defined as
\[
\mathcal{R} := \Bigl\{  \sigma \in J^2(\R^n,\R^2) \, \vert \,  L\cdot \sigma^2_{11} +   R\left(\sigma_0, \check{\sigma},\sigma_1^1,\sigma^2_{1*}\right)=0 \Bigr\}.
\]
with 
\begin{eqnarray}\label{18Avril_1}
L = -2(1,1)
\end{eqnarray}
and
\begin{multline}\label{18Avril_2}
R\left(\sigma_0, \check{\sigma}, \sigma_1^1, \sigma^2_{1*}\right) = -2 \Bigl(\left(\sigma^1_1\right)_1^2 + \left(\sigma^1_1\right)_2^2 + \left(\sigma^1_1\right)_1 \left(\sigma_1^1\right)_2 \Bigr)+ \tilde{k} \left(\sigma_0\right) + \bar{\epsilon}\\
-2e^{-2(\sigma^0)_1} \Bigl( \left(\sigma_{22}^2\right)_2 -  \left(\sigma^1_2\right)_1 \,  \left(\sigma^1_2\right)_2 +  \left(\sigma^1_2\right)_2^2\Bigr) -2e^{-2(\sigma^0)_2} \Bigl( \left(\sigma_{33}^2\right)_1 -  \left(\sigma^1_3\right)_2 \,  \left(\sigma^1_3\right)_1 +  \left(\sigma^1_3\right)_1^2\Bigr) \\
- 2 \sum_{i=4}^n \Bigl( \left(\sigma_{ii}^2\right)_1 + \left(\sigma_{ii}^2\right)_2  +  \left(\sigma^1_i\right)_1^2 +  \left(\sigma^1_i\right)_2^2 - \left(\sigma^1_i\right)_1 \,  \left(\sigma^1_i\right)_2\Bigr).
\end{multline}
To apply Proposition \ref{PROPmixedapp} with $f\equiv 0$, we need to construct two well-adapted ($\Z^n$-periodic in the $x$ variable) smooth families of loops $\{\gamma_x\}_{x\in \R^n}$ and $\{\delta_x\}_{x\in \R^n}$ valued in $\R^2$ such that, for every $x\in \R^n$, the following conditions are satisfied:
\begin{multline*}
\overline{\gamma}_x=\overline{\delta}_x=0, \quad (\gamma_x)_1 + (\gamma_x)_2 = 0, \\
\quad \mbox{and} \quad (\delta_x)_1 + (\delta_x)_2  = \frac{\tilde{k}(x)+\bar{\epsilon}}{2} - ((\gamma_x)_1)^2 - ((\gamma_x)_2)^2 -  ((\gamma_x)_1) ((\gamma_x)_2). 
\end{multline*}
From these conditions, we must have $(\gamma_x)_2 = -(\gamma_x)_1$. If we further impose $(\delta_x)_2=0$, then $(\delta_x)_1$ must be defined as 
\begin{eqnarray}\label{EQ24janv1}
 (\delta_x)_1 := \frac{\tilde{k}(x)+\bar{\epsilon}}{2} - ((\gamma_x)_1)^2,
 \end{eqnarray}
and it must satisfy $\overline{\left(\delta_x\right)_1 } =0$. Since $\int_0^1 \cos^2 (2\pi t) \, dt=1/2$, the function  
\[
(\gamma_x)_1(t) := \sqrt{\tilde{k}(x)+\bar{\epsilon}} \, \cos (2\pi t) \qquad \forall x  \in \T^n, \, \forall t \in \R
\]
ensures that $(\delta_x)_1$, as defined in (\ref{EQ24janv1}), satisfies $\overline{\left(\delta_x\right)_1 } =0$. We conclude the proof by considering the metric on $\T^n$ obtained from $\tilde{g}$ via the quotient operation and by applying Proposition~\ref{PROPmixed} with $N$ large enough.
\end{proof}

\begin{remark}\label{REMInterpretation}
Let us pursue the geometric interpretation initiated in Section \ref{subsection:GeomInterpretation}. The partial differential relation that appears in the above proof is given by 
\[
\mathcal{R} = \Bigl\{  \sigma \in J^2(\R^n,\R^2) \, \vert \,  L\cdot \sigma^2_{11} +   R\left(\sigma_0, \check{\sigma},\sigma_1^1,\sigma^2_{1*}\right)=0 \Bigr\}.
\]
where $L$ and $R$ are defined in equations (\ref{18Avril_1}) and (\ref{18Avril_2}). The existence of well-adapted ($\Z^n$-periodic in the $x$ variable) families of loops $\{\gamma_x\}_{x\in \R^n}$ and $\{\delta_x\}_{x\in \R^n}$, satisfying conditions (L1)-(L3) of Proposition \ref{PROPmixedapp} with $\sigma=0$ requires the existence, for each $x\in \R^n$, of a loop $(\gamma_1,\gamma_2,\delta_1,\delta_2)\in C^{\infty}_{per}(\R, \R^{4})$ taking values in the set $\mbox{\rm Ker}(L) \cap \mathcal{R}_{\sigma,\partial_1,\partial^2_1}$, where
\begin{eqnarray*}
\mbox{\rm Ker}(L) & = & \Bigl\{\left(v_1,v_2,w_1,w_2\right)\in \R^4 \, | \, v_1+v_2 =0\Bigr\}\\
\mathcal{R}_{\sigma,\partial_1,\partial^2_1} &=& \Bigl\{\left(v_1,v_2,w_1,w_2\right)\in \R^4 \, | \, -2[ w_1 + w_2 + v_1^2 + v_2^2 + v_1v_2 ] + \tilde{k}(x) + \bar{\epsilon} =0 \Bigr\},
\end{eqnarray*}
and whose average equals zero (condition $(L_1)$). To visualize it in the hyperplane $Ker(L)$,  set $v_1=-v_2$; then $\mathcal{R}_{\sigma,\partial_1,\partial^2_1}$ corresponds to the surface below: 

\begin{figure}[H]
\begin{center}
\includegraphics[width=6cm]{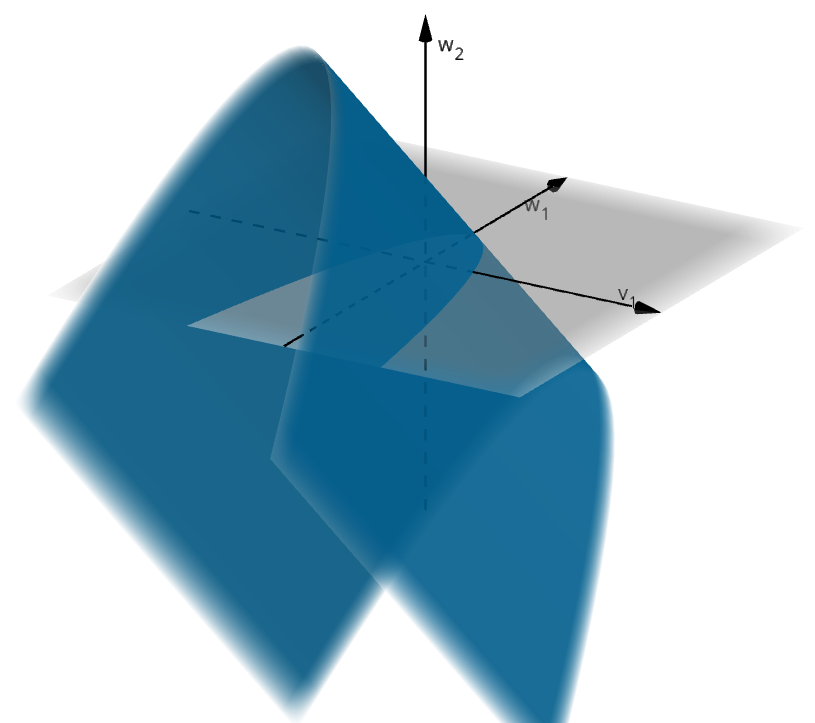}
\caption{The surface $2(w_1+w_2+v_1^2)=F $, where $F$ is a constant\label{fig1}}
\end{center}
\end{figure}

Since the function $H(v_1,w_1,w_2)=2(w_1+w_2+v_1^2)$ is convex, the point $(0,0,0)$ belongs to the convex hull of a level set $\{H=F\}$ for some $F\in \R$ if and only if $0=H(0,0,0)\leq F$. This implies that condition (L1) requires $\tilde{k}(x)+\epsilon$ to be nonnegative, which essentially explains why our perturbation method does not accomodate negative values of  $k$.  
\end{remark}

\subsection{Perturbations of a general metric on $\T^n$}

We prove Theorem \ref{THMScalar} here in the case of a general metric on $\T^n$. The main idea of the proof, aside from the use of Proposition \ref{PROPmixedapp}, is to consider a certain type of diagonal deformation of the metric, up to a diffeomorphism. The metric $g$ denotes the flat metric induced by the Euclidean metric on $\T^n=\R^n/\Z^n$.

\begin{proposition}\label{PROPTnGen}
Assume that $n\geq 3$. Then, for every smooth Riemannian metric $g_0$ on $\T^n$, every smooth function $k:\T^n \rightarrow (0,+\infty)$, and every $\epsilon >0$, there exists a smooth Riemannian metric $g_{\epsilon}$ satisfying 
\[
-k-\epsilon < \mbox{\rm Scal}^{g_{\epsilon}} - \mbox{\rm Scal}^{g_0}< -k \quad \mbox{and} \quad \left\| g_{\epsilon} -g_0\right\|_{C^0_g} < \epsilon.
\]
\end{proposition}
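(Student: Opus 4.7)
The plan is to parallel the proof of Proposition~\ref{PROPTnFlat}. I would lift $g_0$ to a $\Z^n$-periodic Riemannian metric $\tilde{g}_0$ on $\R^n$, construct a perturbation $\tilde{g}_\epsilon$ of $\tilde{g}_0$ depending on two auxiliary $\Z^n$-periodic scalar functions $h_2, h_3 \colon \R^n \to \R$, and then find these functions by applying Proposition~\ref{PROPmixedapp} with mixed corrugation along the $x_1$-direction.

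For the ansatz, I would set
\[
(\tilde{g}_\epsilon)_{ij} = (\tilde{g}_0)_{ij} \text{ for } (i,j) \notin \{(2,2),(3,3)\}, \qquad (\tilde{g}_\epsilon)_{ii} = e^{2h_i}\,(\tilde{g}_0)_{ii} \text{ for } i = 2, 3,
\]
which reduces to the flat-case ansatz when $\tilde{g}_0$ is the Euclidean metric. The remark about performing a diagonal deformation ``up to a diffeomorphism'' suggests that, if this ansatz produces a degenerate semilinear structure for some $g_0$, one first pre-composes $\tilde{g}_0$ with a $\Z^n$-equivariant diffeomorphism $\Phi$ of $\R^n$ chosen so that, in the new coordinates, the coefficient of $\partial_1^2 h_i$ in the scalar curvature expansion is nowhere vanishing (which essentially amounts to controlling $(\Phi^*\tilde{g}_0)^{11}$, automatically positive since the metric is positive definite).

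Next I would use the scalar-curvature formulas from the Appendix to expand $\mbox{Scal}^{\tilde{g}_\epsilon}-\mbox{Scal}^{\tilde{g}_0}$ in powers of the derivatives of $h_2, h_3$, and recast it in the semilinear-in-$x_1$ form
\[
\mbox{Scal}^{\tilde{g}_\epsilon}-\mbox{Scal}^{\tilde{g}_0} = L(x)\cdot (\partial_1^2 h_2, \partial_1^2 h_3) + R\bigl(x, \check{\sigma}, \sigma_1^1, \sigma^2_{1*}\bigr),
\]
with $L(x)$ proportional to $-2(\tilde{g}_0)^{11}(x)(1,1)$, and with all other contributions — cross derivatives $\partial_{1j}^2 h_i$ for $j\ne 1$, quadratic expressions in the first derivatives of $h_i$, and terms arising from the off-diagonal entries of $\tilde{g}_0$ — absorbed into the remainder $R$. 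Writing the equation $\mbox{Scal}^{\tilde{g}_\epsilon}-\mbox{Scal}^{\tilde{g}_0} + \tilde{k}(x) + \bar{\epsilon} = 0$ with $\bar{\epsilon} := \epsilon/2$ then defines the partial differential relation $\mathcal{R}$ required by Definition~\ref{DEFRsemilinear}. Following the flat-case recipe, I would choose the loop families $(\gamma_x)_2 = -(\gamma_x)_1$, $(\delta_x)_2 = 0$, $(\gamma_x)_1(t) = c(x)\cos(2\pi t)$ for a smooth positive function $c(x)$ depending on $\tilde{k}(x)$ and the components of $\tilde{g}_0(x)$ — positivity being ensured by the $+\bar{\epsilon}$ shift — and determine $(\delta_x)_1$ so that $\overline{(\delta_x)_1}=0$, verifying (L1)--(L3). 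Proposition~\ref{PROPmixedapp} then provides $(h_2, h_3)$, and taking $N$ large yields the required $g_\epsilon$ on $\T^n$ by passing to the quotient.

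The main obstacle I anticipate is verifying that the scalar-curvature expansion really does decompose into the semilinear-in-$x_1$ form of Definition~\ref{DEFRsemilinear} for a general $\tilde{g}_0$. The dangerous terms are the cross contributions coming from off-diagonal entries $(\tilde{g}_0)^{1j}$, which couple $\partial_1$-derivatives of $h_i$ to $\partial_j$-derivatives and produce expressions of the form $(\tilde g_0)^{1j}\partial_{1j}^2 h_i$ or $(\tilde g_0)^{1j}\partial_1 h_i \,\partial_j h_k$; these must be shown to feed only into the $\sigma^2_{1*}$ slot of $R$ and not contaminate the coefficient of $\partial_1^2 h_i$. It is precisely this verification that should motivate the preliminary diffeomorphism $\Phi$ (if one is needed), and it is likely to be the most technical step of the proof; once it is carried out, the construction of the loops and the passage to the limit $N \to +\infty$ proceed exactly as in the flat case.
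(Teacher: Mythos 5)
Your overall strategy — recast the scalar-curvature perturbation as a semilinear-in-$x_1$ relation and apply Proposition~\ref{PROPmixedapp} with $f\equiv 0$ — is correct, but the ansatz $(\tilde g_\epsilon)_{ii} = e^{2h_i}(\tilde g_0)_{ii}$ for $i=2,3$ has a genuine gap that your proposed fix does not reach. For a non-diagonal $\tilde g_0$, the coefficient of $\partial_1^2 h_2$ in $\mbox{Scal}^{\tilde g_\epsilon}$ comes from $\partial_{11}\tilde g_{\epsilon,22}$ with prefactor $(\tilde g_\epsilon^{12})^2 - \tilde g_\epsilon^{11}\tilde g_\epsilon^{22}$, which involves the inverse metric $\tilde g_\epsilon^{-1}$ and hence \emph{depends on} $h_2(x), h_3(x)$. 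In the notation of Definition~\ref{DEFRsemilinear}, $L$ must be a function of $\sigma_0$ alone, but your ansatz produces a coefficient of $\sigma^2_{11}$ that also depends on $\sigma^0$. This is fatal for Proposition~\ref{PROPmixedapp}: the term $N\dot\gamma_x(Nx_1)$ in $\partial_1^2 F$ must be killed by $L(x)$, and if $L$ also depends on $\sigma^0 = F(x)=O(1/N)$, one gets $L(x,F(x))\cdot N\dot\gamma_x = L(x,0)\cdot N\dot\gamma_x + O(1/N)\cdot N\dot\gamma_x = 0 + O(1)$ — a nonvanishing error as $N\to\infty$. Moreover $\mathrm{Ker}(L)$ would then vary with the unknown, undermining the choice $(\gamma_x)_2=-(\gamma_x)_1$. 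The obstruction you flag (off-diagonal entries producing cross terms $\partial_{1j}^2 h_i$) is in fact harmless, since those land in $\sigma^2_{1*}$, which $R$ is allowed to see; the one you do not flag — the $h$-dependence of the $\sigma^2_{11}$ coefficient — is the real issue, and your proposed $\Z^n$-equivariant diffeomorphism $\Phi$ would cure it only if $\Phi^*\tilde g_0$ were made globally diagonal, which is impossible in general on $\T^n$ for $n\geq 4$.

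The paper's ansatz avoids this by working in a moving frame rather than in coordinates. It fixes a $\Z^n$-periodic $\tilde g_0$-orthonormal frame $X^1,\ldots,X^n$ with $\alpha_x:=\langle X^1(x),e_1\rangle>0$ and $\langle X^j(x),e_1\rangle=0$ for $j\geq 2$, sets $P_x=(X^1(x),\ldots,X^n(x))$, and defines $\tilde g_x = Q_x^{\mathsf{tr}}D(x)Q_x$ with $Q_x=P_x^{-1}$ and $D=\mathrm{diag}(1,e^{2h_2},e^{2h_3},1,\ldots,1)$. This is a pointwise linear conjugation, not a change of coordinates; pulling back by the affine map $\Psi(z)=x+P_xz$ makes the perturbed metric exactly diagonal $D(x)$ at $z=0$, so the inverse is explicit and independent of $h$, and Lemma~\ref{PROPTnGenLEM} yields $L(x)=-2\alpha_x^2(1,1)$ with $\alpha_x$ determined by $\tilde g_0$ alone. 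That is what makes the relation genuinely semilinear in the sense of Definition~\ref{DEFRsemilinear}; your coordinate-wise diagonal rescaling does not achieve it.
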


\begin{proof}[Proof of Proposition \ref{PROPTnGen}]
Let $g_0$ be a smooth Riemannian metric on $\T^n$, $k:\T^n \rightarrow (0,+\infty)$ be a smooth function, and let $\epsilon >0$ be fixed. 
Denote by $\tilde{g}_0$ and $\tilde{k}$ the lifts of $g_0$ and $k$ to $\R^n$ as $\Z^n$-periodic metrics and functions, respectively. We consider a $\Z^n$-periodic smooth orthonormal family of vector fields $X^1, \ldots, X^n$ on $\R^n$, that is, satisfying ($\delta_{ij}$ stands for the Kronecker symbol)
\[
\left(\tilde{g}_0\right)_x \left( X^{i},X^j\right) = \delta_{ij} \qquad \forall x \in \R^n,
\]
and we assume that 
\begin{eqnarray*}
\alpha_x:=\langle X^1(x),e_1\rangle  >0 \quad \mbox{and} \quad \langle X^2(x),e_1\rangle= \cdots = \langle X^n(x),e_1\rangle=0 \qquad \forall x \in \R^n,
\end{eqnarray*}
where $e_1, \ldots, e_n$ and $\langle \cdot, \cdot \rangle$ stand respectively for the canonical basis and the canonical scalar product in $\R^n$. Such a family can easily be constructed by applying the Gram-Schmidt process starting from the canonical orthonormal family for the Euclidean metric. Then, for every $x\in \R^n$ we denote by $P_x=((P_x)_{ij})$ the $n\times n$ matrix with columns $X^1(x), \ldots, X^n(x)$ which satisfies
\begin{eqnarray}\label{5fev1}
P_x^\mathsf{tr}\left(\tilde{g}_0\right)_xP_x=\mbox{Id}_n 
\end{eqnarray}
and
\begin{eqnarray}\label{5fev2}
 \left(P_x\right)_{1,j} = \delta_{1j} \alpha_x \quad \forall j=1, \ldots,n,
\end{eqnarray}
and we define a perturbed metric $\tilde{g}$ on $\R^n$ by 
\begin{eqnarray}\label{gtildeformula}
\tilde{g}_x = Q_x^{\mathsf{tr}}D(x)Q_x \quad \mbox{with} \quad Q_x:=P_x^{-1}   \qquad \forall x \in \R^n,
\end{eqnarray}
where $D$ is a $\Z^n$-periodic $n\times n$ diagonal matrix with coefficients $(1, e^{2h_2}, e^{2h_3},1, \ldots,1)$ with $h_2, h_3 : \R^n \rightarrow \R$ two smooth functions to be chosen later. The proof of the following lemma is postponed to Appendix \ref{PROPTnGenLEMProof}. It relies on the formulas for scalar curvature provided in Proposition \ref{PROPgorthonormal}, which appear in Appendix \ref{APPPertMetric}, and (\ref{5fev1})-(\ref{gtildeformula}). In the statement, $\check{J}_{h}^2(x)$ represents the $2$-jet of the function $h=(h_1,h_2)$ with the components $x$, $\partial^1_1h$ and $\partial^2_{1j}h$ for $j=1,\ldots,n$ removed, as was done in Section \ref{SECsemilinear}. Additionally, we denote by $0$ the elements of $\check{J}^2(\T^n, \R^2)$ for which all components are zero. 

\begin{lemma}\label{PROPTnGenLEM}
For every $x\in \R^n$, we have 
\begin{multline*}
\mbox{\rm Scal}^{\tilde{g}}_{x} = 
\mbox{\rm Scal}^{\tilde{g}_0}_{x}  - 2 \alpha_x^2  \Bigl( \partial_{1}^2 h_2(x) + \partial_{1}^2 h_3(x)\Bigr) -4 \alpha_x \sum_{j=2}^n  \left(P_x\right)_{j,1} \Bigl( \partial^2_{1j} h_2(x) + \partial^2_{1j} h_3(x)\Bigr)\\ 
- 2 \alpha_x^2   \Bigl(  \left(\partial_{1} h_2(x) \right)^2 + \left(\partial_{1} h_3(x) \right)^2 +  \partial_1 h_{2}(x)  \partial_{1} h_{3}(x)\Bigr)\\
 + \Psi_2 \left(x, \check{J}^2_{h}(x)\right) \, \partial_1h_2(x) + \Psi_3 \left(x, \check{J}^2_{h}(x)\right) \, \partial_1h_3(x) + \Psi \left( x, \check{J}^2_{h}(x)\right),
\end{multline*}
where $ \Psi, \Psi_{2}, \Psi_{3}: \R^n \times \check{J}^2(\T^n, \R^2) \rightarrow \R$ are smooth functions and $\Psi (x,0)=0$ for all $x\in \R^n$. 
\end{lemma}

Set $\bar{\epsilon}:=\epsilon/2$. The desired result follows, by setting $h_2=F_1$ and $h_3=F_2$, if we show that there exists a smooth function $F=(F_1,F_2)\in C^{\infty}_{per}(\R^n, \R^2)$ such that 
\[
\left\| F\right\|_{C^0} < \bar{\epsilon} \quad \mbox{and} \quad  J^2_F(x) \in \mathcal{R}_{\bar{\epsilon}},
\]
where $\mathcal{R}\subset J^2(\T^n,\R^2)$ is the semilinear (in $x_1$) partial differential relation  defined by 
\[
\mathcal{R} := \Bigl\{  \sigma \in J^2(\T^n,\R^2) \, \vert \,  L\left(\sigma_0\right) \cdot \sigma_{11}^2 +   R\left(\sigma_0,\check{\sigma},\sigma_1^1, \sigma^2_{1*}\right)=0 \Bigr\}.
\]
with 
\[
L \left(\sigma_0\right)= -2 \alpha_{\sigma_0}^2(1,1)
\]
and
\begin{multline*}
R\left(\sigma_0, \check{\sigma},\sigma^1_1, \sigma^2_{1*}\right) =  - 2\alpha_{\sigma_0}^2 \Bigl( \left(\sigma^1_1\right)_1^2 + \left(\sigma^1_1\right)_2^2 + \left(\sigma^1_1\right)_1 \left(\sigma^1_1\right)_2\Bigr)\\
- 4\alpha_{\sigma_0} \sum_{j=2}^n  \left(P_x\right)_{j,1}  \Bigl( \left(\sigma^2_{1j}\right)_1 + \left( \sigma^2_{1j}\right)_2 \Bigr)\\
+ \Psi \left(\check{\sigma}\right) + \Psi_{2}  \left(\check{\sigma}\right)  \,  \left(\sigma^1_1\right)_1 + \Psi_3 \left(\check{\sigma}\right)  \,  \left(\sigma^1_1\right)_2 + \tilde{k}\left(\sigma_0\right) + \bar{\epsilon}.
\end{multline*}
To apply Proposition \ref{PROPmixedapp} with $f\equiv 0$, we need to construct two well-adapted ($\Z^n$-periodic in the $x$ variable) smooth families of loops $\{\gamma_x\}_{x\in \R^n}$ and $\{\delta_x\}_{x\in \R^n}$ valued in $\R^2$, such that, for every $x\in \R^n$, the following conditions hold:
\[
\overline{\gamma}_x=\overline{\delta}_x=0, \quad (\gamma_x)_1 + (\gamma_x)_2 = 0,
\]
and
\begin{multline*}
2 \alpha_{x}^2 \left((\delta_x)_1 + (\delta_x)_2 \right) =  -2 \alpha_x^2 \Bigl( \left(\left(\gamma_x\right)_1\right)^2 + \left(\left(\gamma_x\right)_2\right)^2 +\left(\gamma_x\right)_1 \left(\gamma_x\right)_2\Bigr)\\
  - 4\alpha_x \sum_{j=2}^n  \left(P_x\right)_{j,1} \Bigl( \left(\partial_j\gamma_{x}\right)_1 +  \left(\partial_j\gamma_{x}\right)_2 \Bigr) \\
 +  \Psi_{2} (x,0)  \,  \left(\gamma_x\right)_1 + \Psi_3 (x,0)  \,  \left(\gamma_x\right)_2  + \tilde{k}(x) + \bar{\epsilon},
\end{multline*}
where we have used $\Psi(x,0)=0$. From these conditions, we must have $(\gamma_x)_2 = -(\gamma_x)_1$ for all $x\in \T^n$. This implies that $(\partial_j\gamma_{x})_1 + (\partial_j \gamma_{x})_2 =0$ for all $j=2, \ldots,n$. Thus, if we further impose $(\delta_x)_2=0$, then $(\delta_x)_1$ must be defined as 
\begin{eqnarray}\label{22avril1}
2 \alpha_{x}^2 (\delta_x)_1    =  - 2 \alpha_x^2 \left(\left(\gamma_x\right)_1\right)^2 
 +  \left( \Psi_{2} (x,0)  - \Psi_3 (x,0)\right) \left(\gamma_x\right)_1  + \tilde{k}(x) + \bar{\epsilon},
\end{eqnarray}
and it must satisfy $\overline{\left(\delta_x\right)_1 } =0$. Since $\int_0^1 \cos^2 (2\pi t) \, dt=1/2$, the function  
\[
(\gamma_x)_1(t) := \frac{\sqrt{\tilde{k}(x)+\bar{\epsilon}}}{\alpha_x} \, \cos (2\pi t) \qquad \forall x  \in \T^n, \, \forall t \in \R
\]
ensures that $(\delta_x)_1$, as defined in (\ref{22avril1}), satisfies $\overline{\left(\delta_x\right)_1 } =0$. The result then follows by Proposition~\ref{PROPmixedapp} and by considering the metric on $\T^n$ obtained from $\tilde{g}$ via the quotient operation. 
\end{proof}

\subsection{Prescribing scalar curvature on thick tori}\label{ThickTori}

The following result will plays a key role in the proof of Theorem \ref{THMScalar}. Its proof is simply an adaption of the proof of Proposition \ref{PROPTnGen}.

\begin{proposition}\label{PROPThickTori}
Let $n,d$ be two integers with $n\geq 3$ and $1 \leq d \leq n-1$. Let $\delta>0$, and let 
\[
\Phi : (-\delta,1+\delta) \times B^{n-d-1}(0,\delta) \times \T^d  \longrightarrow V := \mbox{\rm Im}(\Phi)  \subset \R^n,
\]
be a smooth diffeomorphism. Let $h, h_0$ be two smooth Riemannian metrics on the open set $V\subset \R^n$. Then, for any $\nu>0$, any smooth function $s: V \rightarrow [0,\infty)$, and any compact subset $C$ of $V$, there exists a smooth Riemannian metric $h_{\nu}$ on $V$ satisfying the following properties:
\begin{itemize}
\item[(i)] $\| h_{\nu}-h_0\|_{C^0_h} < \nu$ on $C$.
\item[(ii)] For every $z\in C$, $-s(z)^2 - \nu < \mbox{\rm Scal}^{h_{\nu}}_z - \mbox{\rm Scal}^{h_0}_z < -s(z)^2 + \nu$.
\item[(iii)] For every $z\in V$, $s(z)=0 \Longrightarrow (h_{\nu})_{z}= (h_0)_{z}$.
\end{itemize}
\end{proposition}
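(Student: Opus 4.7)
The plan is to pull back the problem to the periodic slab $W := (-\delta,1+\delta) \times B^{n-d-1}(0,\delta) \times \T^d$ via $\Phi$ and then adapt the proof of Proposition~\ref{PROPTnGen} with two modifications: the loop amplitudes are built from $\tilde s$ itself (so the perturbation vanishes exactly where $s=0$), and a smooth cutoff localizes the perturbation to a compact subset so that the resulting metric extends smoothly to all of $V$ by equaling $h_0$ outside a compact piece.

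Set $\tilde h_0 := \Phi^* h_0$, $\tilde h := \Phi^* h$, $\tilde s := s \circ \Phi$, and $\tilde C := \Phi^{-1}(C)$, which is compact in $W$. It suffices to construct a smooth Riemannian metric $\tilde g$ on $W$ satisfying the analogs of (i)--(iii) and then push forward by $\Phi$. Fix a compact set $K \subset (-\delta,1+\delta) \times B^{n-d-1}(0,\delta)$ whose interior contains the projection of $\tilde C$ onto the first $n-d$ coordinates, together with a smooth function $\bar\chi:\R^{n-d}\to[0,1]$ that is supported in the interior of $K$ and identically $1$ on a neighborhood of this projection. As in Proposition~\ref{PROPTnGen}, build an $\tilde h_0$-orthonormal frame $X^1,\ldots,X^n$ on $W$ satisfying $\alpha_x := \langle X^1(x),e_1\rangle > 0$ and $\langle X^j(x), e_1\rangle = 0$ for $j \geq 2$, form $P_x$ with columns $X^i(x)$ and $Q_x := P_x^{-1}$, and seek a perturbation of the form
\[
\tilde g_x := Q_x^{\mathsf{tr}} D(x) Q_x, \qquad D(x) := \mathrm{diag}\bigl(1, e^{2F_1(x)}, e^{2F_2(x)}, 1, \ldots, 1\bigr),
\]
for some smooth $F = (F_1, F_2) : W \to \R^2$ to be determined.

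The analog of Lemma~\ref{PROPTnGenLEM} expresses $\mathrm{Scal}^{\tilde g}_x - \mathrm{Scal}^{\tilde h_0}_x$ as a semilinear-in-$x_1$ expression in $J^2_F(x)$ of the same structural form. On $K \times \T^d$, this reduces the problem to finding $F$ with $\|F\|_{C^0}$ small and $J^2_F(x)$ lying in a suitable thickening of the semilinear PDR whose target curvature change is $-\bar\chi(x')^2\,\tilde s(x)^2$, where $x':=(x_1,\ldots,x_{n-d})$. Take
\[
(\gamma_x)_1(t) := \bar\chi(x') \, \frac{\tilde s(x)}{\alpha_x} \cos(2\pi t), \qquad (\gamma_x)_2 := -(\gamma_x)_1, \qquad (\delta_x)_2 := 0,
\]
and define $(\delta_x)_1$ by the analog of equation~(\ref{22avril1}) with target $\bar\chi^2 \tilde s^2$ in place of $\tilde k + \bar\epsilon$. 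The identity $\int_0^1 \cos^2(2\pi t)\,dt = 1/2$ yields $\overline{(\delta_x)_1} = 0$, and smoothness of the loops is automatic because $\tilde s \geq 0$ is itself smooth and $\alpha_x > 0$. Applying Proposition~\ref{PROPmixedapp} with $f \equiv 0$ produces, for $N$ large, the desired $F$; since $\bar\chi$ is compactly supported in $x'$, both loops vanish outside $\mathrm{supp}(\bar\chi)\times\T^d$, so $F$ extends smoothly by zero to $W$, yielding a smooth metric $\tilde g$ on $W$ which we push forward via $\Phi$ to obtain $h_\nu$.

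Properties (i) and (ii) then follow from Proposition~\ref{PROPmixedapp}(a),(b) applied on $\tilde C$, together with the $C^0$-continuity of $(F_1, F_2) \mapsto Q^{\mathsf{tr}} D Q$ at the origin and the fact that $\bar\chi \equiv 1$ on a neighborhood of the projection of $\tilde C$. For (iii), note that at any $x \in W$ with $\tilde s(x) = 0$ both the amplitude of $\gamma_x$ and the constant term $\bar\chi(x')^2 \tilde s(x)^2$ in the equation defining $(\delta_x)_1$ vanish, so $\gamma_x \equiv \delta_x \equiv 0$; hence $F(x) = 0$ by formula~(\ref{PROPmixedEQ}), making $D(x) = \mathrm{Id}_n$ and $\tilde g_x = Q_x^{\mathsf{tr}} Q_x = (\tilde h_0)_x$. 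I expect the main subtlety to lie precisely in condition (iii): unlike in Proposition~\ref{PROPTnGen}, one cannot add a strictly positive slack inside the square root defining the loop amplitude, since the loops must vanish wherever $s$ vanishes. The resolution is to take the curvature target to be exactly $-\tilde s^2$; this choice makes the required amplitude $\tilde s/\alpha_x$ smooth (because $\tilde s \geq 0$ is smooth) and automatically enforces (iii).
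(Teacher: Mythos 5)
Your proof is correct and takes essentially the same route as the paper: pull back via $\Phi$ to the periodic slab, use the $\tilde h_0$-orthonormal frame and the $Q_x^{\mathsf{tr}}D(x)Q_x$ deformation, apply Proposition~\ref{PROPmixedapp} with $f\equiv 0$, and — crucially — take the loop amplitude to be $\tilde s/\alpha_x$ with no additive slack, in contrast to the $\sqrt{\tilde k+\bar\epsilon}/\alpha_x$ of Proposition~\ref{PROPTnGen}; you correctly identify this as the mechanism enforcing property (iii). The cutoff $\bar\chi$ you insert is not in the paper's argument: the paper simply notes that the explicit formulas for $\gamma_x$ and $\delta_x$ are already globally defined on $\tilde V$ (because $\tilde s$ and $\alpha$ are), so $F$ and the perturbed metric extend automatically, with the quantitative conclusions of Proposition~\ref{PROPmixedapp} invoked only on the compact periodic lift of $C$. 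Your cutoff is harmless — since $\bar\chi\equiv 1$ on a neighborhood of the projection of $\tilde C$, properties (i) and (ii) are unchanged, and replacing the target $\tilde s^2$ by $\bar\chi^2\tilde s^2$ still makes $\gamma_x$ and $\delta_x$ vanish wherever $\tilde s$ does, so (iii) survives; you just pay a small amount of extra bookkeeping to re-verify $\overline{(\delta_x)_1}=0$, which still works because the $\bar\chi^2$ factor cancels in the average.
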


\begin{proof}[Proof of Proposition \ref{PROPThickTori}]
Let $\bar{h}, \bar{h}_0$ be the Riemannian metrics on 
\[
\bar{V}:=(-\delta,1+\delta) \times B^{n-d-1}(0,\delta) \times \T^d
\]
defined as the pullbacks of $h$ and $h_0$ by $\Phi$, respectively. Those metrics, along with the function $s$, can be lifted to metrics $\tilde{h}$, $\tilde{h}_0$, and a function $\tilde{s}$, on 
\[
\tilde{V}:=(-\delta,1+\delta) \times B^{n-d-1}(0,\delta)\times \R^d,
\]
with coordinates, $(x_1, \cdots, x_n)$, which are $\Z^d$-periodic in the last $d$ variables. Consider a smooth orthonormal family (with respect to $\tilde{h}_0$) of vector fields $X^1, \ldots, X^n$ on $\tilde{V}$ which is $\Z^d$-periodic in the last $d$ variables and satisfies
\begin{eqnarray*}
\alpha_x:=\langle X^1(x),e_1\rangle  >0 \quad \mbox{and} \quad \langle X^2(x),e_1\rangle= \cdots = \langle X^n(x),e_1\rangle=0 \qquad \forall x \in \tilde{V},
\end{eqnarray*}
where $e_1, \ldots, e_n$ and $\langle \cdot, \cdot \rangle$ stand respectively for the canonical basis and the canonical scalar product in $\R^n$. As in the proof of Proposition \ref{PROPTnGen},  for every $x\in \tilde{V}$ we denote by $P_x=((P_x)_{ij})$ the $n\times n$ matrix with columns $X^1(x), \ldots, X^n(x)$ and we define a perturbed metric $\hat{h}$ on $\tilde{V}$ by 
\begin{eqnarray}\label{gtildeformula}
\hat{h}_x = Q_x^{\mathsf{tr}}D(x)Q_x \quad \mbox{with} \quad Q_x:=P_x^{-1}   \qquad \forall x \in \tilde{V},
\end{eqnarray}
where $D$ is a diagonal matrix with coefficients $(1, e^{2h_2}, e^{2h_3},1, \ldots,1)$ where $h_2, h_3 : \tilde{V} \rightarrow \R$ are two smooth functions that are $\Z^d$-periodic in the last $d$ variables. Let $\tilde{C}$ be the lift of $\bar{C}:=\Phi^{-1}(C)\subset \bar{V}$ in $\R^n$. The desired result will follow, by setting $h_2(x)=F_1(x)$ and $h_3(x)=F_2(x)$ for all $x\in E:= \tilde{C} \subset \tilde{V}$, if we show that, for any $\nu>0$ there is a smooth function $F=(F_1,F_2) \in C^{\infty}_{per,d}(E,\R^m)$  such that 
\[
\left\| F\right\|_{C^0} < \nu \quad \mbox{and} \quad  J^2_F(x) \in \mathcal{R}_{\nu},
\]
where $\mathcal{R}\subset J^2(E,\R^2)$ is the semilinear partial differential relation  defined by 
\[
\mathcal{R} := \Bigl\{  \sigma \in J^2(E,\R^2) \, \vert \,  L\left(\sigma_0\right)\cdot \sigma_{11}^2 +   R\left(\sigma_0,\check{\sigma},\sigma_1^1, \sigma^2_{1*}\right)=0 \Bigr\}.
\]
with 
\[
L  \left(\sigma_0\right) = -2 \alpha_{\sigma_0}^2 (1,1)
\]
and
\begin{multline*}
R\left( \sigma_0,\check{\sigma},\sigma^1_1, \sigma^2_{1*}\right) =  - 2 \alpha_{\sigma_0}^2 \Bigl( \left(\sigma^1_1\right)_1^2 + \left(\sigma^1_1\right)_2^2 + \left(\sigma^1_1\right)_1 \left(\sigma^1_1\right)_2\Bigr) \\
- 4\alpha_{\sigma_0} \sum_{j=2}^n  \left(P_x\right)_{j,1}  \Bigl( \left(\sigma^2_{1j}\right)_1 + \left( \sigma^2_{1j}\right)_2 \Bigr)\\
+ \Psi \left(\check{\sigma}\right) + \Psi_{2}  \left(\check{\sigma}\right)  \,  \left(\sigma^1_1\right)_1 + \Psi_3 \left(\check{\sigma}\right)  \,  \left(\sigma^1_1\right)_2 + \tilde{s}(\sigma_0)^2,
\end{multline*}
where the maps $\Psi$ and $\Psi_i$ come from Lemma~\ref{PROPTnGenLEM}. To apply Proposition \ref{PROPmixedapp} with $f\equiv 0$, we need to construct two well-adapted smooth families of loops $\{\gamma_x\}_{x\in E}$ and $\{\delta_x\}_{x\in E}$ valued in $\R^2$, such that, for every $x\in E$, the following conditions hold:
\[
\overline{\gamma}_x=\overline{\delta}_x=0, \quad (\gamma_x)_1 + (\gamma_x)_2 = 0,
\]
and
\begin{multline*}
2 \alpha_{\sigma_0}^2\left( (\delta_x)_1 + (\delta_x)_2\right)  =  - 2 \alpha_{\sigma_0}^2 \Bigl( \left(\left(\gamma_x\right)_1\right)^2 + \left(\left(\gamma_x\right)_2\right)^2 +\left(\gamma_x\right)_1 \left(\gamma_x\right)_2\Bigr) \\
- 4 \alpha_{\sigma_0} \sum_{j=2}^n  \left(P_x\right)_{j,1} \Bigl( \left(\partial_j\gamma_{x}\right)_1 +  \left(\partial_j\gamma_{x}\right)_2 \Bigr) \\
 +  \Psi_{2} (x,0)  \,  \left(\gamma_x\right)_1 + \Psi_3 (x,0)  \,  \left(\gamma_x\right)_2  + \tilde{s}(x)^2,
\end{multline*}
where we have used $\Psi(x,0)=0$. From these conditions, we must have $(\gamma_x)_2 = -(\gamma_x)_1$ for all $x\in E$. This implies that $(\partial_j\gamma_{x})_1 + (\partial_j \gamma_{x})_2 =0$ for all $j=2, \ldots,n$. Thus, if we further impose $(\delta_x)_2=0$, then $(\delta_x)_1$ must be defined as 
\begin{eqnarray}\label{deltax1thick}
2 \alpha_{x}^2 (\delta_x)_1  =  - 2\alpha_x^2 \left(\left(\gamma_x\right)_1\right)^2 
 +  \Psi_{2} (x,0)  \,  \left(\gamma_x\right)_1 + \Psi_3 (x,0)  \,  \left(\gamma_x\right)_2  + \tilde{s}(x)^2,
\end{eqnarray}
and it must satisfy $\overline{\left(\delta_x\right)_1 } =0$. Since $\int_0^1 \cos^2 (2\pi t) \, dt=1/2$, the function  
\[
(\gamma_x)_1(t) := \frac{\tilde{s}(x)}{\alpha_x} \, \cos (2\pi t) \qquad \forall x  \in E, \, \forall t \in \R
\]
ensures that $(\delta_x)_1$, as defined in (\ref{deltax1thick}), satisfies $\overline{\left(\delta_x\right)_1 } =0$. By construction, if $\tilde{s}(x)=0$ then $\delta_x=\gamma_x\equiv 0$ and as a consequence $\hat{h}_x=(\tilde{h}_0)_x$. Therefore, by Proposition \ref{PROPmixedapp}, the above discussion shows that for every $\nu>0$, there exists a metric $\tilde{h}_{\nu}$ on $\tilde{V}$, that is periodic in the last $d$ variables, such that 
\begin{eqnarray*}
\bigl\| \tilde{h}_{\nu}-\tilde{h}_0\bigr\|_{C^0_{\tilde{h}}} < \nu,
\end{eqnarray*}
where the $C^0$-norm is taken over $E$, 
\begin{eqnarray*}
-\tilde{s}(x)^2 - \nu < \mbox{\rm Scal}^{\tilde{h}_{\nu}}_x - \mbox{\rm Scal}^{\tilde{h}_0}_x < -\tilde{s}(x)^2 + \nu \qquad \forall x \in E,
\end{eqnarray*}
and
\begin{eqnarray*}
\tilde{s}(x) = 0 \quad \Longrightarrow \quad (\tilde{h}_{\nu})_x=(\tilde{h}_0)_x \qquad \forall x \in \tilde{V}.
\end{eqnarray*}
The result then follows by Proposition \ref{PROPmixedapp} and by considering the metric $h_{\nu}$ obtained by pushforward of $\tilde{h}_{\nu}$ by the quotient map and the map $\Phi$.
\end{proof}

\begin{remark}\label{REMPoint}
The above proof easily demonstrate that a Riemannian metric can be perturbed into a Riemannian metric with controlled scalar curvature at a given point. More specifically, let $h$ and $h_0$ be two smooth Riemannian metrics on an open neighborhood $V$ of the origin in $\R^n$. Then, for any $\nu>0$, any smooth function $s: V \rightarrow [0,\infty)$, and any compact subset $C$ of $V$, there exists a smooth Riemannian metric $h_{\nu}$ on $V$ satisfying the following properties:
\begin{itemize}
\item[(i)] $\| h_{\nu}-h_0\|_{C^0_h} < \epsilon$ on $C$.
\item[(ii)] For every $z\in C$, $-s(z)^2 - \nu < \mbox{\rm Scal}^{h_{\nu}}_z - \mbox{\rm Scal}^{h_0}_z < -s(z)^2 + \nu$.
\item[(iii)] For every $z\in V$, $s(z)=0 \Longrightarrow (h_{\nu})_{z}= (h_0)_{z}$.
\end{itemize}
Of course, by considering a suitable chart, this result can be extended to manifolds.  
\end{remark}

\section{Proof of Theorem \ref{THMScalar}}\label{SEC4}

The proof of Theorem \ref{THMScalar} consists of equipping the manifold $M$ with a triangulation and solving the relation simplex by simplex, starting from the $0$-dimensional simplices and proceeding up to the $n$-dimensional simplices. Before presenting the proof, we recall some basic facts about simplicial complexes and triangulations, referring the reader to \cite{munkres66} for further details.\\ 

Let $k$ a positive integer be fixed. A simplex of dimension $d$ in $\R^k$, with $0\leq d \leq k$, is a subset $\sigma$ of $\R^k$ defined as the convex hull of $d+1$ affinely independent points, called its vertices. More precisely, these are $(d+1)$ points $v_0, \ldots, v_d\in \R^k$ such that $v_1-v_0, \ldots, v_d-v_0$ are linearly independent. A face of a simplex $\sigma$ is any simplex obtained by taking the convex hull of a subset of the vertices of $\sigma$. A simplicial complex $\mathcal{K}$ in $\R^k$ is a collection of simplices in $\R^k$ that satisfy the following natural compatibility conditions: 
\begin{itemize}
\item[(1)] Every face of a simplex in $\mathcal{K}$ is itself an element of $\mathcal{K}$.
\item[(2)] For any two simplices $\sigma_1, \sigma_2 \in \mathcal{K}$, if their intersection $\sigma_1 \cap \sigma_2$ is non-empty then, it is a face of both $\sigma_1$ and $\sigma_2$.
\item[(3)] The set 
\[
|\mathcal{K}|:=\bigcup_{\sigma\in \mathcal{K}}\sigma,
\] 
called the geometric realization of $\mathcal{K}$, has the property that every point in it has a neighborhood that intersects only finitely many simplices in $\mathcal{K}$.
\end{itemize}
The dimension of the simplicial complex $\mathcal{K}$ is defined as the maximum of the dimensions of its simplices. A smooth triangulation of a smooth manifold $N$ is a pair $(\mathcal{K},\varphi_{\mathcal{K}})$, where $\mathcal{K}$ is a simplical complex in some Euclidean space, and $\varphi_{\mathcal{K}}: |\mathcal{K}| \rightarrow N$ is a homeomorphism with the additional property that the restriction of $\varphi_{\mathcal{K}}$ to each simplex in $\mathcal{K}$ is a smooth embedding. The celebrated Whitehead Theorem asserts that every smooth manifold admits a smooth triangulation. Moreover, for a smooth manifold with boundary, any triangulation of the boundary can be extended to a triangulation of the entire manifold.\\

To prove Theorem \ref{THMScalar} we begin by recalling that $M$ is equipped with a smooth Riemannian metric $g$. Additionally, we consider a smooth Riemannian metric $g_0$ on $M$, a smooth function $k: M \rightarrow (0,+\infty)$, and $\epsilon >0$. Since the smooth manifold $M$ admits a smooth triangulation, Theorem \ref{THMScalar} will follow from the following property for $d=n$:\\

\noindent $(P_d)$ Let $\mathcal{K}$ be a simplicial complex of dimension $d$, and let $\varphi_{\mathcal{K}}: |\mathcal{K}| \rightarrow M$ be a continuous map that is a homeomorphism onto its image, with the property that its restriction to each simplex of $\mathcal{K}$ is a smooth embedding and the image of each simplex lies within a local chart of $M$. Let $\mathcal{L}$ be a simplicial subcomplex of $\mathcal{K}$, meaning it is a simplicial complex whose simplices are contained within $\mathcal{K}$, which may be empty. Let  $g_{\mathcal{L}}$ be a smooth Riemannian metric on $M$, $U_{\mathcal{L}}$ be an open subset of $M$ containing $\varphi_{\mathcal{K}}(|\mathcal{L}|)$, and let $\alpha \in (0,1)$ such that the following properties are satisfied:
\begin{itemize}
\item[$(P_d1)$] $\| g_{\mathcal{L}} -g_0\|_{C^0_g} < \epsilon$ on $M$.
\item[$(P_d2)$] $ \mbox{\rm Scal}^{g_{\mathcal{L}}}<\mbox{\rm Scal}^{g_{0}}-k- \alpha  \epsilon$ on $U_{\mathcal{L}}$.
\item[$(P_d3)$] $\mbox{\rm Scal}^{g_{\mathcal{L}}}> \mbox{\rm Scal}^{g_{0}}-k - \epsilon $ on $M$.
\end{itemize}
Then, for every $\beta \in (0,\alpha)$, there exists a smooth Riemannian metric $g_{\mathcal{K}}$ on $M$ that satisfies: 
\begin{itemize}
\item[$(P_d4)$] $\| g_{\mathcal{K}} -g_0\|_{C^0_g} < \epsilon$ on $M$.
\item[$(P_d5)$] $ \mbox{\rm Scal}^{g_{\mathcal{K}}}<\mbox{\rm Scal}^{g_{0}}-k- \beta  \epsilon$ on $\varphi_{\mathcal{K}}(|\mathcal{K}|)$.
\item[$(P_d6)$] $\mbox{\rm Scal}^{g_{\mathcal{K}}}> \mbox{\rm Scal}^{g_{0}}-k - \epsilon $ on $M$.
\item[$(P_d7)$] $g_{\mathcal{K}} = g_{\mathcal{L}}$ on $U_{\mathcal{L}}$.
\end{itemize}
\bigskip

For $d=n$, a triangulation $(\mathcal{K},\varphi_{\mathcal{K}})$ of $M$, $\mathcal{L}=\emptyset$, and $\alpha=1/2$, property $(P_d)$ provides a metric $g_{\epsilon}=g_{\mathcal{K}}$ satisfying the conclusion of Theorem \ref{THMScalar}. We now proceed to prove $(P_d)$ by induction for every integer $d \in [0,n]$. \\

The property $(P_0)$ follows from the fact that the geometric realization of a simplicial complex of dimension $0$ is a locally finite union of points and Remark \ref{REMPoint}.\\

Assume that $(P_{d'})$ holds for any $d'\in \{0, \ldots, d\}$ with $d\in \{0, \ldots, n-1\}$ and let us prove $(P_{d+1})$. Let $\mathcal{K}$ be a simplicial complex of dimension $d+1$, and let $\varphi_{\mathcal{K}}: |\mathcal{K}| \rightarrow M$ be a continuous map that is a homeomorphism onto its image, with the property that its restriction to each simplex of $\mathcal{K}$ is a smooth embedding and the image of each simplex lies within a local chart of $M$. Let $\mathcal{L}$ be a simplicial subcomplex of $\mathcal{K}$, let $g_{\mathcal{L}}$ be a smooth Riemannian metric on $M$, and let $U_{\mathcal{L}}$ be an open subset of $M$ containing $\varphi_{\mathcal{K}}(|\mathcal{L}|)$. Fix $\alpha, \beta \in (0,1)$ such that $\beta<\alpha$, and assume that $(P_d1)$-$(P_d3)$ hold. Define $\mathcal{M}$ to be the simplicial complex consisting of all simplices of $\mathcal{K}$ of dimension less than or equal to $d$, together with all simplices of $\mathcal{L}$. We claim that, by the property $(P_d)$ given by the induction hypothesis, there exists a smooth Riemannian metric $g_{\mathcal{M}}$ on $M$ satisfying the following properties:
\begin{itemize}
\item[(a)] $\| g_{\mathcal{M}} -g_0\|_{C^0_g} < \epsilon$ on $M$.
\item[(b)] $ \mbox{\rm Scal}^{g_{\mathcal{M}}}<\mbox{\rm Scal}^{g_{0}}-k-(\beta + \alpha)\epsilon/2$ on $\varphi_{\mathcal{K}}(|\mathcal{M}|)$.
\item[(c)] $\mbox{\rm Scal}^{g_{\mathcal{M}}}> \mbox{\rm Scal}^{g_{0}}-k - \epsilon $ on $M$.
\item[(d)] $g_{\mathcal{M}} = g_{\mathcal{L}}$ on $U_{\mathcal{L}}$. 
\end{itemize}
To establish this, let $\mathcal{L}_d$ and $\mathcal{M}_d$ denote the simplicial complexes consisting of all simplices of $\mathcal{L}$ and $\mathcal{M}$, respectively, that have  dimension less than or equal to $d$. By property $(P_{d})$, since $\mathcal{M}_d$ has dimension $d$, $\mathcal{L}_d \subset \mathcal{M}_d,$ $\varphi_{\mathcal{K}}(\mathcal{L}_d) \subset \varphi_{\mathcal{K}}(\mathcal{L}) \subset U_{\mathcal{L}}$, and $(\beta+\alpha)/2<\alpha$, there exists a smooth Riemannian metric $g_{\mathcal{M}_d}$ on $M$ satisfying
\[
\left\| g_{\mathcal{M}_d} -g_0\right\|_{C^0_g} < \epsilon, \quad \mbox{\rm Scal}^{g_{\mathcal{M}_d}}<\mbox{\rm Scal}^{g_{0}}-k -\frac{(\beta+\alpha)\epsilon}{2}\, \mbox{ on } \, \varphi_{\mathcal{K}}(|\mathcal{M}_d|),
\]
\[
\mbox{\rm Scal}^{g_{\mathcal{M}_d}} > \mbox{\rm Scal}^{g_{0}}-k - \epsilon  \, \mbox{ on } \, M, \quad \mbox{and} \quad g_{\mathcal{M}_d} = g_{\mathcal{L}}  \, \mbox{ on } \, \mathcal{U}_{\mathcal{L}}.
\] 
Since $\varphi_{\mathcal{K}}(|\mathcal{M}|)=\varphi_{\mathcal{K}}(|\mathcal{M}_d|)\cup \varphi_{\mathcal{K}}(|\mathcal{L}|)$, $(P_d2)$ holds, and $(\beta+\alpha)/2<\alpha$,  the metric  $g_{\mathcal{M}}=g_{\mathcal{M}_d}$  satisfies the required properties (a)-(d).   

It remains to achieve the construction of $g_{\mathcal{K}}$ on the images by $\varphi_{\mathcal{K}}$ of simplices of $\mathcal{K}$ which are not in $\mathcal{M}$. These simplices have dimension $d+1$, and their boundary is contained in $\varphi_{\mathcal{K}}(|\mathcal{M}|)$. We need the following lemma whose proof is postponed to Appendix \ref{AppProofLEMToricDiscs}:
%(for every positive integer $k$, $D^k$ denotes the closed unit disc in $\R^k$ and $\partial D^k$ represents its boundary; additionally, for any set $S\subset \R^k$, $\mbox{Int}(S)$ stands for the interior of $S$ and for any $\nu>0$, $S_{\nu}$ refers to the $\nu$-neighborhood of $S$ defined as $S_{\nu}:=S+\nu \mbox{Int}(D^k)$)\ludovic{J'ai ajouté ici des notations en réponse à une remarque de Mélanie.}:

\begin{lemma}\label{LEMToricDiscs}
For every integer $k\in [2,n]$, there exists a compact set $\Sigma^k \subset D^k$, the closed unit disc in $\R^k$, with the following properties:
\begin{itemize}
\item[(i)] There exists an open smooth submanifold $\tilde{\Sigma}^k$ of $\R^k$ with codimension $2$ such that $\Sigma^k=\tilde{\Sigma}^k \cap D^k$, and $\tilde{\Sigma}^k$ is transverse to $\partial D^k$. 
\item[(ii)] For every open neighborhood $\mathcal{N}$ of $\Sigma^k\cup \partial D^k$, there exists a smooth map $\Phi: \T^{k-1} \times [0,1] \rightarrow \mbox{Int}(D^k\setminus \Sigma^k)$ such that $\Phi$ is a diffeomorphism onto its image and $\Phi( \T^{k-1} \times \{0,1\})\subset \mathcal{N}$. 
\end{itemize}
\end{lemma}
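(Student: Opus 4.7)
The plan is to take $\Sigma^k$ to be the standard coordinate $(k-2)$-disc inside $D^k$ and use the resulting $S^1$-symmetry to reduce the construction of $\Phi$ to a codimension-$0$ embedding inside an open $(k-1)$-ball. Write coordinates on $\R^k$ as $(x, y) \in \R^{k-2} \times \R^2$, set $\tilde{\Sigma}^k := \R^{k-2} \times \{(0, 0)\}$, and $\Sigma^k := \tilde{\Sigma}^k \cap D^k$. Property (i) is immediate: $\tilde{\Sigma}^k$ is a linear subspace, hence a boundaryless smooth submanifold of codimension $2$, and at any intersection point $(x, 0, 0)$ with $|x| = 1$ the outward normal $(x, 0, 0)$ to $\partial D^k$ belongs to $\tilde{\Sigma}^k$, so $T\tilde{\Sigma}^k + T\partial D^k$ already spans $T\R^k$.

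For (ii), introduce polar coordinates $y = r(\cos\theta, \sin\theta)$ on $\R^2 \setminus \{0\}$; this yields an identification
\[
\mbox{\rm Int}(D^k) \setminus \Sigma^k \;\cong\; W \times S^1, \qquad W := \bigl\{(x, r) \in \R^{k-2} \times (0, \infty) : |x|^2 + r^2 < 1\bigr\},
\]
under which $\Sigma^k \cup \partial D^k$ corresponds to the topological boundary $\partial W$ of the open convex body $W \subset \R^{k-1}$. Since $\Sigma^k \cup \partial D^k$ is compact and invariant under the $SO(2)$-action on the last two coordinates, an averaging argument shows that any open neighborhood $\mathcal{N}$ of it in $\R^k$ contains an $SO(2)$-invariant one, whose intersection with $W \times S^1$ takes the form $\mathcal{U} \times S^1$ for some open neighborhood $\mathcal{U}$ of $\partial W$ in $W$. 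It thus suffices to construct a smooth embedding
\[
\phi : T^{k-2} \times [0, 1] \longrightarrow W \quad \text{with} \quad \phi\bigl(T^{k-2} \times \{0, 1\}\bigr) \subset \mathcal{U},
\]
after which $\Phi(\tau, \sigma, t) := (\phi(\tau, t), \sigma)$, using the decomposition $T^{k-1} = T^{k-2} \times S^1$ to carry the $S^1$-factor onto the $\theta$-direction of $W \times S^1$, provides the required smooth embedding.

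To construct $\phi$, observe that $\mathcal{U}$ is a nonempty open subset of $W$, hence contains a Euclidean open ball $B$; it therefore suffices to smoothly embed $T^{k-2} \times [0, 1]$ inside $B$. For $k = 2$ this is trivial (embed $[0, 1]$ into the open interval $B$). For $k \geq 3$, smoothly embed the compact ``solid torus'' $S := D^2 \times T^{k-3}$ inside $B$ (this is possible because $S$ is a tubular neighborhood of the standardly embedded $T^{k-3} \subset \R^{k-2} \subset \R^{k-1}$ and can be rescaled to fit in any prescribed ball), then form the concentric shrinking $S' := D^2_{1/2} \times T^{k-3} \subset \mbox{\rm Int}(S)$ by contracting only the $D^2$-factor. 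The closed complement
\[
\overline{S \setminus S'} \;=\; \bigl(D^2 \setminus \mbox{\rm Int}(D^2_{1/2})\bigr) \times T^{k-3} \;\cong\; S^1 \times [0, 1] \times T^{k-3} \;\cong\; T^{k-2} \times [0, 1]
\]
is a smoothly embedded thick $(k-2)$-torus sitting inside $B \subset \mathcal{U}$, and the underlying diffeomorphism provides $\phi$.

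The main delicacy is in checking that the cobordism $\overline{S \setminus S'}$ between its two boundary tori is a genuine smooth product $T^{k-2} \times [0, 1]$ rather than merely a $T^{k-2}$-bundle over an interval with possibly nontrivial monodromy. This is automatic from the choice of $S'$ as a concentric shrinking purely in the $D^2$-factor, which turns the complement into the explicit product $\bigl(D^2 \setminus \mbox{\rm Int}(D^2_{1/2})\bigr) \times T^{k-3}$. All remaining ingredients ($SO(2)$-averaging of $\mathcal{N}$, the inductive embedding of $T^{k-3}$ in $\R^{k-2}$ used to place $S$ standardly in $\R^{k-1}$, and the rescaling of $S$ into $B$) are classical.
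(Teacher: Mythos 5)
Your construction does satisfy the lemma exactly as it is written, but the written statement is weaker than what the paper actually uses, and this is precisely why the discrepancy matters. As stated, condition (ii) only asks that $\Phi(\T^{k-1}\times\{0,1\})\subset\mathcal{N}$, which is essentially vacuous: your solution embeds the \emph{entire} thick torus in a tiny ball inside $\mathcal{N}$, and in fact one could take $\Sigma^k=\emptyset$. What the proof of Theorem~\ref{THMScalar} in Section~\ref{SEC4} implicitly needs when it derives condition (f) is that $\varphi_{\mathcal{K}}(\sigma)\subset U'\cup C$, with $C$ a compact neighborhood of $T=\Psi(\T^d\times[0,1])$ sitting in the tubular neighborhood $V$; translated back through $F$, this requires the covering property
\[
D^k\setminus\mathcal{N}\;\subset\;\Phi\bigl(\T^{k-1}\times[0,1]\bigr),
\]
which is the real content of the lemma (and which the paper's construction plainly achieves, e.g.\ the $k=2$ base case takes the thick circle to be the genuine annulus $D^2\setminus(\Sigma^2_{\nu/2}\cup(\partial D^2)_{\nu/2})$, whose complement in $D^2$ lies in $\mathcal{N}$). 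Your construction, placing the whole image in a small ball inside $\mathcal{N}$, fails this by a maximal margin.

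Your opening reduction is, however, the right start and matches the paper's: both pass through the $SO(2)$-symmetry map $F$ and reduce the problem on $D^{k+1}$ to one on the half-disc $W\cong D^k_+$. The point you miss is that after this reduction the residual task — embedding a thick torus $T^{k-2}\times[0,1]$ in $W$ whose complement in $W$ lies inside the boundary collar $\mathcal{U}$ — is exactly as hard as the original problem one dimension lower (note $W$ is diffeomorphic to a $(k-1)$-disc and $\mathcal{U}$ to a neighborhood of $\partial D^{k-1}\cup\Sigma^{k-1}$). This is why the paper's $\Sigma^{k+1}$ is not the flat disc $D^{k-1}\times\{0\}$ but the more elaborate set $L^{k+1}\cup F(\S^1\times\tilde{\Sigma}^k_+)$: the extra codimension-$2$ piece $F(\S^1\times\tilde{\Sigma}^k_+)$ is precisely what encodes, after the $SO(2)$-reduction, the lower-dimensional $\Sigma^k$ needed to make the inductive covering work. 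The bulk of the paper's proof (the maps $\varphi_R$, $\Psi_R$, the smoothing of $\tau_+$) is devoted to implementing this induction, and none of it is dispensable once the covering requirement is taken into account. So you should regard your proof not as an alternative route but as evidence that the lemma statement is missing the covering clause that its application demands.
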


\begin{figure}[H]
\begin{center}
\includegraphics[width=4cm]{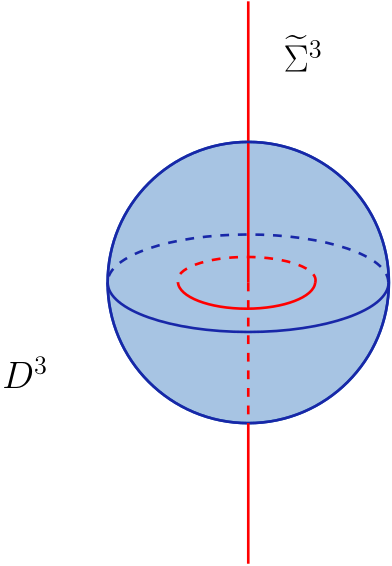}
\caption{The codimension $2$ manifold $\tilde{\Sigma}^3$ associated with $D^3$, shown in red, is the union of a vertical segment containing the origin and a planar circle centered at the origin\label{fig2}}
\end{center}
\end{figure}

Let us describe how to construct $g_{\mathcal{K}}$ on $\varphi_{\mathcal{K}}(\sigma)$, where $\sigma$ is a $(d+1)$-dimensional simplex in $\mathcal{K}$ that does not belong to $\mathcal{M}$. Since all faces of $\sigma$ with dimension at most $d$ are contained in $\mathcal{M}_d$, property (b) above provides control over the scalar curvature of $g_{\mathcal{M}}$ on the topological boundary $\partial \varphi_{\mathcal{K}}(\sigma)$ of $\varphi_{\mathcal{K}}(\sigma)$ in $M$. Consequently, there exists an open set $U\subset M$ containing $\partial \varphi_{\mathcal{K}}(\sigma)$ such that
\[
U_{\mathcal{L}} \cup \partial \varphi_{\mathcal{K}}(\sigma) \subset U \quad \mbox{and} \quad \mbox{\rm Scal}^{g_{\mathcal{M}}}<\mbox{\rm Scal}^{g_{0}}-k - \frac{(\beta+\alpha)\epsilon}{2} \, \mbox{ on } \, U.
\]
Since $\varphi_{\mathcal{K}}(\sigma)$ lies within a local chart of $M$, we assume, without loss of generality, that we are working in $\R^n$. That is, we assume the existence of an open set $W\subset \R^n$ containing both $U$ and $\varphi_{\mathcal{K}}(\sigma)$, and our goal is to construct a smooth Riemannian metric $g_{\mathcal{K}}$ on $W$ satisfying the following conditions: 
\begin{itemize}
\item[(e)] $\| g_{\mathcal{K}} -g_0\|_{C^0_g} < \epsilon$ on $W$.
\item[(f)] $ \mbox{\rm Scal}^{g_{\mathcal{K}}}<\mbox{\rm Scal}^{g_{0}}-k- \beta  \epsilon$ on $\varphi_{\mathcal{K}}(\sigma)$.
\item[(g)] $\mbox{\rm Scal}^{g_{\mathcal{K}}}> \mbox{\rm Scal}^{g_{0}}-k - \epsilon $ on $W$.
\item[(h)] $g_{\mathcal{K}} = g_{\mathcal{L}}$ on $U_{\mathcal{L}}$.
\end{itemize}
Consider a smooth diffeomorphism $F:\R^{d+1} \rightarrow \R^{d+1}$ that maps the $(d+1)$-dimensional closed unit disc $D^{d+1}$ to a set $D$ satisfying the following properties: 
\[
D \subset \mbox{Int} \left(\varphi_{\mathcal{K}}(\sigma)\right), \quad \varphi_{\mathcal{K}}(\sigma) \subset U \cup D, \quad \mbox{and} \quad \partial D \subset U.
\] 
Next, apply Lemma \ref{LEMToricDiscs} with $k=d+1$ and define
\[
\Sigma := F \left(\Sigma^{d+1}\right).
\]
By construction, specifically by property (i) of Lemma \ref{LEMToricDiscs}, $\Sigma$ is a smooth submanifold with boundary of dimension $d-1$ in $D=F(D^{d+1})$, and its boundary  $\partial \Sigma$ is contained within the boundary $\partial D$ of $D$. By Whitehead's Theorem, $\Sigma$ admits a triangulation $(\mathcal{S},\varphi_{\mathcal{S}})$ induced by a triangulation of $\partial \Sigma$. In other words, there exists a simplicial subcomplex $\mathcal{P}$ of $\mathcal{S}$ such that $\varphi_{\mathcal{S}} (|\mathcal{P}|)=\partial \Sigma$. Since $\partial \Sigma \subset \partial D \subset U$, property $(P_{d-1})$ from the induction hypothesis (applied with $\mathcal{K}=\mathcal{S}$, $\varphi_{\mathcal{K}}=\varphi_S$, $\mathcal{L}=\mathcal{P}$, $g_{\mathcal{L}}=g_{\mathcal{M}}$, $U_{\mathcal{L}}=U$, $\alpha=(\beta+\alpha)/2$, and $\beta=(3\beta+\alpha)/4$) yields a metric $g_{\mathcal{S}}$ on $W$ satisfying
\[
\left\| g_{\mathcal{S}} -g_0\right\|_{C^0_g} < \epsilon, \quad  \mbox{\rm Scal}^{g_{\mathcal{S}}}<\mbox{\rm Scal}^{g_{0}}-k - \frac{(3\beta+\alpha)\epsilon}{4} \, \mbox{ on } \, \varphi_{\mathcal{S}}(|\mathcal{S}|),
\]
\[
\mbox{\rm Scal}^{g_{\mathcal{S}}}> \mbox{\rm Scal}^{g_{0}}-k - \epsilon  \, \mbox{ on } \, M, \quad \mbox{and} \quad g_{\mathcal{S}} = g_{\mathcal{M}}  \, \mbox{ on } \, U.
\] 
Then, define the smooth Riemannian metric $g_{\mathcal{M}}'$ on $W$ by 
\[
\left\{
\begin{array}{rcl}
g_{\mathcal{M}}' & = & g_{\mathcal{S}} \mbox{ on } \varphi_{\mathcal{K}} (\sigma)\\
g_{\mathcal{M}}' & = & g_{\mathcal{M}} \mbox{ on } W \setminus \varphi_{\mathcal{K}} (\sigma).
\end{array}
\right.
\]
Since $g_{\mathcal{S}}$ coincides with $g_{\mathcal{M}}$ on the open set $U$ that contains $\partial \varphi_{\mathcal{K}} (\sigma)$, $g_{\mathcal{M}}'$ is smooth, and moreover,  the control of its scalar curvature on $\varphi_{\mathcal{S}}(|\mathcal{S}|)=\Sigma$ can be extended to an open set $U'$ containing $U$ and $\Sigma$. Therefore, $g_{\mathcal{M}}'$ satisfies:
\begin{itemize}
\item[(i)] $\| g_{\mathcal{M}}' -g_0\|_{C^0_g} < \epsilon$ on $W$.
\item[(j)] $\mbox{\rm Scal}^{g_{\mathcal{M}}'}<\mbox{\rm Scal}^{g_{0}}-k -(3\beta+\alpha)\epsilon/4$ on $U'$.
\item[(k)] $\mbox{\rm Scal}^{g_{\mathcal{M}}'}> \mbox{\rm Scal}^{g_{0}}-k - \epsilon $ on $W$.
\item[(l)] $g_{\mathcal{M}}' = g_{\mathcal{L}}$ on $U_{\mathcal{L}}\subset U \subset U'$. 
\end{itemize}
By assertion (ii) of Lemma \ref{LEMToricDiscs} and using the diffeomorphism $F$, there exists a smooth map 
\[
\Psi : \T^d \times [0,1] \longrightarrow \mbox{Int}(D\setminus \Sigma) \subset D \subset \varphi_{\mathcal{K}}(\sigma) \subset W \subset \R^n
\]
which is a diffeomorphism onto its image and satisfies $\Phi( \T^{d-1} \times \{0,1\})\subset U'$. This immersion $\Psi$ can be smoothly extended, for some $\delta >0$, to a diffeomorphism 
\[
\Phi : (-\delta,1+\delta) \times B^{n-d-1}(0,\delta) \times \T^d  \longrightarrow V := \Phi \left( (-\delta,1+\delta)  \times B^{n-d-1}(0,\delta)  \times \T^d \right) \subset W
\]
satisfying 
\[
T:= \Phi \left([0,1]\times \{0\}\times \T^d\right) = \Psi\left(\T^d \times [0,1]\right).
\]

\begin{figure}[H]
\begin{center}
\includegraphics[width=5cm]{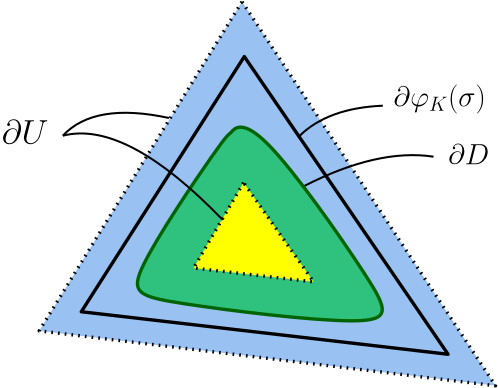}
\caption{The union of $U$ (in blue) and $D$ (in yellow) covers the simplex $\varphi_{\mathcal{K}}(\sigma)$. The green region corresponds to points belonging to both $U$ and $D$\label{fig2}}
\end{center}
\end{figure}

Fix a compact neighborhood $C\subset V$ of $T$. By properties (i) and (k), there exists 
\begin{eqnarray}\label{1avrileq1}
\nu\in \left(0,\min \left\{ \epsilon,\frac{(\alpha-\beta)\epsilon}{8}\right\}\right)
\end{eqnarray}
such that
\begin{eqnarray}\label{1avrileq3}
\| g_{\mathcal{M}}' -g_0\|_{C^0_g}+\nu < \epsilon
\end{eqnarray}
and
\begin{eqnarray}\label{1avrileq2}
\mbox{\rm Scal}^{g_{\mathcal{M}}'}> \mbox{\rm Scal}^{g_{0}}-k - \epsilon +2\nu \, \mbox{ on } \, C.
\end{eqnarray}

\begin{figure}[H]
\begin{center}
\includegraphics[width=5cm]{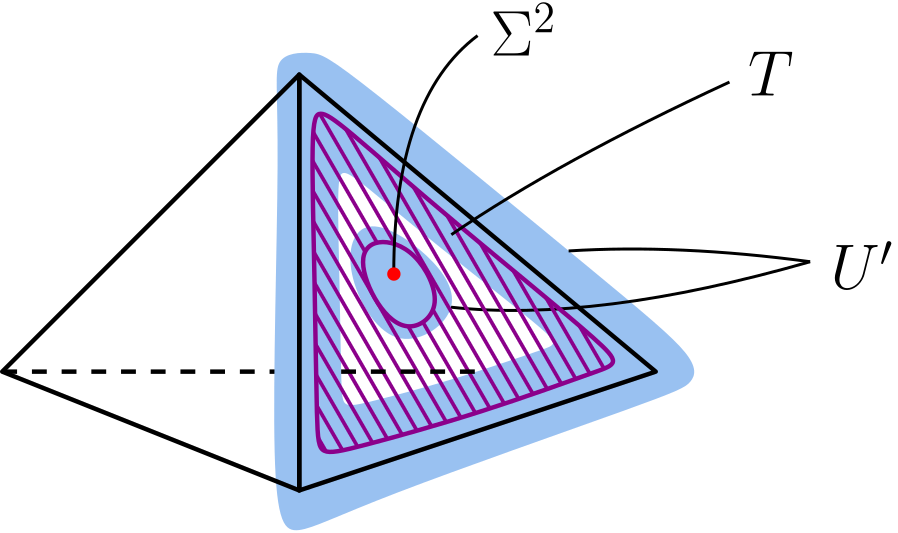}
\caption{The sets $U'$, $\Sigma^2$, and $T$ in the case where $\varphi_{\mathcal{K}}(\sigma)$ is a $2$-dimensional simplex in ambient dimension $3$\label{fig4}}
\end{center}
\end{figure}

Next, consider a smooth function $s:V \rightarrow [0,\infty)$  satisfying the following conditions: 
\begin{itemize}
\item[(m)] $s=0$ on $U' \cup (V\setminus C)$. 
\item[(n)] $s^2 - \nu > \mbox{\rm Scal}^{g_{\mathcal{M}}'} - \mbox{\rm Scal}^{g_0} + k + \beta \epsilon$ on $C$.
\item[(o)] $s^2 + \nu < \mbox{\rm Scal}^{g_{\mathcal{M}}'} - \mbox{\rm Scal}^{g_0} + k + \epsilon$ on $C$.
\end{itemize}
Such a function exists because 
\begin{multline*}
\mbox{\rm Scal}^{g_{\mathcal{M}}'} - \mbox{\rm Scal}^{g_0} + k + \beta \epsilon + \nu \\
= \mbox{\rm Scal}^{g_{\mathcal{M}}'} - \mbox{\rm Scal}^{g_0} + k +  \frac{(3\beta+\alpha)\epsilon}{4} + \nu -  \frac{(\alpha-\beta)\epsilon}{4} < \nu -  \frac{(\alpha-\beta)\epsilon}{4}  < -  \frac{(\alpha-\beta)\epsilon}{8} \, \mbox{ on } \, U'
\end{multline*}
by (j) and (\ref{1avrileq1}),  
\[
\mbox{\rm Scal}^{g_{\mathcal{M}}'} - \mbox{\rm Scal}^{g_0} + k + \epsilon - \nu > \nu  \, \mbox{ on } \, C
\]
from (\ref{1avrileq1})-(\ref{1avrileq2}), and , since $2\nu < (\alpha-\beta)\epsilon/4<(1-\beta)\epsilon/2$ (using (\ref{1avrileq1})), 
\[
\mbox{\rm Scal}^{g_{\mathcal{M}}'} - \mbox{\rm Scal}^{g_0} + k + \beta \epsilon + \nu < \mbox{\rm Scal}^{g_{\mathcal{M}}'} - \mbox{\rm Scal}^{g_0} + k +  \epsilon - \nu \, \mbox{ on } \, V.
\]
By applying Proposition \ref{PROPThickTori} with $h=g$, $h_0=g_{\mathcal{M}}'$, $\nu$, $s$, and $C$, we obtain a metric $h_{\nu}$ on $V$ satisfying:
\begin{itemize}
\item[(p)] $\| h_{\nu}-g_{\mathcal{M}}'\|_{C^0_h} < \nu$ on $C$.
\item[(q)] For every $z\in C$, $-s(z)^2 - \nu < \mbox{\rm Scal}^{h_{\nu}}_z - \mbox{\rm Scal}^{g_{\mathcal{M}}'}_z < -s(z)^2 + \nu$.
\item[(r)] For every $z\in V$, $s(z)=0 \Longrightarrow (h_{\nu})_{z}= (g_{\mathcal{M}}')_{z}$.
\end{itemize}
As a consequence, combining (q) with (n)-(o), we deduce
\begin{multline}\label{eq1avril4}
 \mbox{\rm Scal}^{g_0}  -k - \epsilon <  \mbox{\rm Scal}^{g_{\mathcal{M}}'}  -s^2 - \nu < \mbox{\rm Scal}^{h_{\nu}} \\
 < \mbox{\rm Scal}^{g_{\mathcal{M}}'}  -s^2 + \nu < \mbox{\rm Scal}^{g_0}  -k - \beta \epsilon   \, \mbox{ on } \, C.
\end{multline}
Moreover, since $h_{\nu}$ coincides with $g_{\mathcal{M}}'$ on the open set $V\setminus C$ by (m) and (r),  these metrics have the same scalar curvature on $V\setminus C$. Using $(k)$, we obtain
\begin{eqnarray}\label{eq1avril5}
\mbox{\rm Scal}^{h_{\nu}} = \mbox{\rm Scal}^{g_{\mathcal{M}}'}  > \mbox{\rm Scal}^{g_0}  -k - \epsilon   \, \mbox{ on } \, V\setminus C.
\end{eqnarray}
We conclude by defining $g_{\mathcal{K}}$ on $W$ as follows:
\[
\left\{
\begin{array}{rcl}
g_{\mathcal{K}} & = & h_{\nu} \mbox{ on } V\\
g_{\mathcal{K}} & = & g_{\mathcal{M}}' \mbox{ on } W\setminus V.
\end{array}
\right.
\]
By construction, $g_{\mathcal{K}}$ is smooth, property (e) follows from (\ref{1avrileq3}) and (p), (f) follows from (\ref{eq1avril4}) and $T\subset C$, (g) follows from (\ref{eq1avril4})-(\ref{eq1avril5}) and (k), and finally (h) follows from  (l) and (m).

\appendix
\section{Ricci and scalar curvatures in Riemannian geometry}\label{APPRiemFormulas}

The aim of this section is to recall the definitions of Ricci and scalar curvatures and to provide formulas to compute them in local coordinates. We adopt the notations of the textbook by Gallot, Hulin and Lafontaine \cite{ghl04}.

\subsection{Main definitions}\label{APP1DEF}

We condider a smooth compact manifold $M$ of dimension $n\geq 2$ equipped with a smooth Riemannian metric $g$. The Levi-Civita connection $\nabla^g$ of the metric $g$ is the
unique connection which is torsion-free and consistent with $g$
where the first property means that ($\Gamma(TM)$ stands for the set
of smooth vector fields on $M$)
\begin{eqnarray*}%\label{PROPtorsion}
\nabla^g_XY-\nabla^g_YX=[X,Y] \qquad \forall X,Y\in \Gamma(TM)
\end{eqnarray*}
and the latter
\begin{eqnarray*}%\label{PROPconsistent}
X\cdot g(Y,Z)=g\left(\nabla^g_XY,Z\right) +
g\left(Y,\nabla^g_XZ\right)\qquad \forall X,Y,Z\in \Gamma(TM).
\end{eqnarray*}
The Riemann curvature endomorphism $R^g:\Gamma(TM)\times \Gamma(TM) \times
\Gamma(TM) \rightarrow \Gamma(TM)$ of $g$ is the $(1,3)$-tensor
defined by
\begin{eqnarray*}
R^g(X,Y)Z :=
\nabla^g_{Y}\left(\nabla^g_XZ\right)-\nabla^g_{X}\left(\nabla^g_YZ\right)+\nabla^g_{[X,Y]}Z
\qquad \forall X,Y,Z \in \Gamma(TM)
\end{eqnarray*}
and the Riemann curvature tensor $R^g:\Gamma(TM)\times \Gamma(TM) \times \Gamma(TM) \times
\Gamma(TM) \rightarrow \R$  of $g$, also denoted by $R^g$,  is the $(0,4)$-tensor given by
\begin{eqnarray*}
R^g(X,Y,Z,T) := g\left(R^g(X,Y)Z, T \right) \qquad \forall X,Y,Z, T \in \Gamma(TM).
\end{eqnarray*}
The Ricci curvature is the $(0,2)$-tensor defined as the trace with respect to $g$ of the endomorphism $v \in T_mM\mapsto R^g(x,v)z\in T_mM$ (with $x,z\in T_mM$) and the scalar curvature is given by the trace of the Ricci curvature with respect to the metric $g$. In other words, if  $(e_i)_{i=1,\ldots,n}$ is an orthonomal basis with respect to $g_m$ at some point $m\in M$, then we have 
\[
\mbox{Ric}_m^g(x,y) = \sum_{i=1}^n g_m\left(R^g(x,e_i)y,e_i\right) = \sum_{i=1}^n R_m^g\left(x,e_i,y,e_i\right)
\qquad \forall x,y \in T_mM,
\] 
and
\[
 \mbox{Scal}^g_m = \sum_{i,j=1}^n R_m^g\left(e_i,e_j,e_i,e_j\right).
\]
So, if $x=y=e_i$ then $\mbox{Ric}^g(x,x)$ corresponds to the sum of sectional curvatures of all planes with bases $(e_i,e_j)$ for $j\in \{1, \ldots, n\} \setminus \{i\}$ and if $n=2$, we have $\mbox{\rm Ric}^g = \kappa \, g$ where
$\kappa$ is the Gaussian curvature on $M$.

Assume now that we work on a neighborhood of a point $m\in M$ with local coordinates $(x_1, \ldots, x_n)$ and denote by $g_{ij}$ and $g^{ij}$ the metric $g$ and its inverse $g^{-1}$ in these coordinates.  If two vector fields $X,Y\in \Gamma(TM)$ are given in local
coordinates by
\[
X= \sum_{i=1}^n X^i \partial_i, \quad Y= \sum_{i=1}^n Y^i
\partial_i,
\]
then $\nabla^g_XY$ is given by
\begin{eqnarray*}%\label{PROPconnection}
\nabla^g_XY= \sum_{i=1}^n \left( \sum_{j=1}^n X^j \partial_j Y^i +
\sum_{j,k=1}^n \Gamma_{jk}^i X^jY^k\right) \partial_i,
\end{eqnarray*}
where the Christoffel symbols $\Gamma_{ij}^l$, for $i,j,l$ in
$\{1,\ldots,n\}$, are defined as
\begin{eqnarray*}%\label{PROPchrist}
\Gamma_{ij}^l = \frac{1}{2} \sum_{p=1}^n \Bigl( g^{lp} \left(
\partial_i g_{jp} + \partial_j g_{ip} - \partial_p g_{ij}
\right)\Bigr).
\end{eqnarray*}
Thus, if  $X=\partial_i, Y=\partial_j, Z=\partial_k$, then we
have
\[
\nabla_X^gZ=  \sum_{l=1}^n \left(  \Gamma_{ik}^l \right) \partial_l
\quad \mbox{and} \quad \nabla_Y^gZ=  \sum_{l=1}^n \left(
\Gamma_{jk}^l \right) \partial_l,
\]
so that
\begin{eqnarray*}
R^g(X,Y)Z & = & \nabla^g_{Y}\left(\nabla^g_XZ\right)-\nabla^g_{X}\left(\nabla^g_YZ\right)+\nabla^g_{[X,Y]}Z \\
& = &  \nabla^g_{Y}\left(\nabla^g_XZ\right)-\nabla^g_{X}\left(\nabla^g_YZ\right)\\
&= &  \sum_{p=1}^n \left( \partial_j \Gamma_{ik}^p +  \sum_{r=1}^n  \Gamma_{jr}^p \Gamma_{ik}^r\right) \partial_p -  \sum_{p=1}^n \left( \partial_i \Gamma_{jk}^p +  \sum_{r=1}^n  \Gamma_{ir}^p \Gamma_{jk}^r\right) \partial_p \\
& = & \sum_{p=1}^n R_{ijk}^p \partial_p,
\end{eqnarray*}
with
\[
R_{ijk}^p =    \partial_j \Gamma_{ik}^p - \partial_i
\Gamma_{jk}^p+  \sum_{r=1}^n \Bigl( \Gamma_{jr}^p \Gamma_{ik}^r -
\Gamma_{ir}^p \Gamma_{jk}^r \Bigr).
\]
In conclusion, if $X,Y$ are two vector fields with $X=x=\partial_i, Z=z=\partial_k$ at $m$, then we have
\begin{eqnarray}\label{RICCIformula}
\mbox{Ric}^g_{ik} :=\mbox{Ric}^g(x,z) = \sum_{j=1}^n R_{ijk}^j = \sum_{j=1}^n \left(
\partial_j \Gamma_{ik}^j - \partial_i \Gamma_{jk}^j+  \sum_{r=1}^n
\left( \Gamma_{jr}^j \Gamma_{ik}^r -   \Gamma_{ir}^j \Gamma_{jk}^r
\right)\right)
\end{eqnarray}
for all $i,k=1, \ldots,n$ 
and moreover the scalar curvature is given by 
\begin{eqnarray}\label{SCALARformula}
\mbox{Scal}^g :=  \sum_{i,j=1}^n g^{ij}\mbox{Ric}^g_{ij}.
\end{eqnarray}

\subsection{Ricci and scalar curvatures in the general case}\label{APP1GEN}

Keeping the notations of the previous section we can decompose the Ricci tensor as the sum of two terms, one which is composed of derivatives of second order of $g$ and the other gathering terms with derivatives of order one. 
 
\begin{proposition}\label{prop:coeff_de_Ricci}
Each term of the Ricci tensor can be written
for any $i,k\in \{1, \ldots,n\}$ as
\[
\mbox{\rm Ric}_{ik}^{g}=\mathcal{R}_{ik}^{g,2}+\mathcal{R}_{ik}^{g,1},
\]
with
\begin{equation*}
\mathcal{R}_{ik}^{g,2}=\frac{1}{2}\sum_{j,p=1}^{n}g^{jp}\left(\partial_{jk}g_{ip}-\partial_{ik}g_{jp}-\partial_{jp}g_{ik}+\partial_{ip}g_{jk}\right)
\end{equation*}
and
\begin{align*}\label{FormRicci1}
\mathcal{R}^{g,1}_{ik}  &=\frac{1}{4}   \sum_{j,l,p,q=1}^n g^{jl}
g^{pq} \bigg[- \partial_j g_{lq} \left(\partial_{i} g_{kp} +
\partial_{k} g_{ip}- \partial_{p} g_{ik}\right) +  \partial_i g_{lq} \left(\partial_{j} g_{kp} + \partial_{k} g_{jp} - \partial_{p} g_{jk}\right)\\
 &+   \left(\partial_{q} g_{jl} - \partial_{l} g_{jq}\right)
\left(\partial_{i} g_{kp} + \partial_{k} g_{ip} - \partial_{p}
g_{ik}\right) -    \left( \partial_{q} g_{il} - \partial_{l}
g_{iq}\right) \left(\partial_{j} g_{kp} + \partial_{k} g_{jp} -
\partial_{p} g_{jk}\right)\bigg].
\end{align*}
\end{proposition}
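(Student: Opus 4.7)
The proof is a direct calculation starting from formula~\eqref{RICCIformula} and the definition of the Christoffel symbols. The plan is to apply the Leibniz rule to $\partial_j\Gamma_{ik}^j$ and $\partial_i\Gamma_{jk}^j$, regroup all contributions according to whether they involve second-order or first-order partial derivatives of $g$, and then rearrange using the symmetries of $g$ and $g^{-1}$; the quadratic Christoffel terms in~\eqref{RICCIformula} contribute only to the first-order part.

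For the second-order part, the Leibniz rule applied to each of $\partial_j\Gamma_{ik}^j$ and $\partial_i\Gamma_{jk}^j$ produces a piece in which the outer derivative hits the parenthesis $(\partial_i g_{kp}+\partial_k g_{ip}-\partial_p g_{ik})$ (respectively its analogue) while leaving $g^{jp}$ untouched. Subtracting, one obtains
\[
\frac{1}{2}\sum_{j,p} g^{jp}\Bigl[(\partial_{ji}g_{kp}+\partial_{jk}g_{ip}-\partial_{jp}g_{ik})-(\partial_{ij}g_{kp}+\partial_{ik}g_{jp}-\partial_{ip}g_{jk})\Bigr].
\]
The terms $\partial_{ji}g_{kp}$ and $\partial_{ij}g_{kp}$ cancel by the equality of mixed partials, yielding exactly $\mathcal{R}_{ik}^{g,2}$.

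For the first-order part, I would collect three contributions. First, differentiating the inverse metric in $\partial_j\Gamma_{ik}^j-\partial_i\Gamma_{jk}^j$ by means of the identity $\partial_l g^{jp}=-\sum_{m,q}g^{jm}g^{pq}\partial_l g_{mq}$ produces
\[
\frac{1}{2}\sum_{j,l,p,q}g^{jl}g^{pq}\Bigl[-\partial_j g_{lq}(\partial_i g_{kp}+\partial_k g_{ip}-\partial_p g_{ik})+\partial_i g_{lq}(\partial_j g_{kp}+\partial_k g_{jp}-\partial_p g_{jk})\Bigr].
\]
Second, swapping the dummy indices $j\leftrightarrow p$ shows that the two outer terms of $\sum_j\Gamma_{jr}^j$ cancel, giving $\sum_j\Gamma_{jr}^j=\tfrac{1}{2}\sum_{j,p}g^{jp}\partial_r g_{jp}$, so after a convenient relabelling $\sum_{j,r}\Gamma_{jr}^j\Gamma_{ik}^r$ contributes $\tfrac{1}{4}\sum g^{jl}g^{pq}\partial_q g_{jl}(\partial_i g_{kp}+\partial_k g_{ip}-\partial_p g_{ik})$. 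Third, the term $-\sum_{j,r}\Gamma_{ir}^j\Gamma_{jk}^r$, after expanding the product and relabelling the dummy indices so that the second factor becomes $(\partial_j g_{kp}+\partial_k g_{jp}-\partial_p g_{jk})$ and the common pattern $g^{jl}g^{pq}$ appears, turns into
\[
\frac{1}{4}\sum_{j,l,p,q}g^{jl}g^{pq}(-\partial_i g_{lq}-\partial_q g_{il}+\partial_l g_{iq})(\partial_j g_{kp}+\partial_k g_{jp}-\partial_p g_{jk}).
\]

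To match the stated form of $\mathcal{R}_{ik}^{g,1}$, I would then use the symmetry $g^{jl}=g^{lj}$ to rewrite $\sum_{j,l}g^{jl}\partial_j g_{lq}$ as $\tfrac{1}{2}\sum_{j,l}g^{jl}(\partial_j g_{lq}+\partial_l g_{jq})$; this converts the $\tfrac{1}{2}$ coefficient of the first $\partial g^{-1}$ contribution into $\tfrac{1}{4}$ and, combined with the $\Gamma_{jr}^j\Gamma_{ik}^r$ contribution, produces precisely the factor $-\partial_j g_{lq}+(\partial_q g_{jl}-\partial_l g_{jq})$ multiplying $(\partial_i g_{kp}+\partial_k g_{ip}-\partial_p g_{ik})$. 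For the $\partial_i g_{lq}$ contribution no such symmetrization is needed: the coefficient $\tfrac{1}{2}$ from $\partial g^{-1}$ combines with the $-\partial_i g_{lq}$ coming from $\Gamma_{ir}^j\Gamma_{jk}^r$ to give $\tfrac{1}{4}\partial_i g_{lq}$, while the remaining $-\partial_q g_{il}+\partial_l g_{iq}$ is exactly $-(\partial_q g_{il}-\partial_l g_{iq})$. The main obstacle is the bookkeeping in the third contribution: the product $\Gamma_{ir}^j\Gamma_{jk}^r$ expands into nine monomials whose index structure is not visibly aligned with the target, and one must carefully permute the dummy indices using the symmetries of $g$ and $g^{-1}$ to reach the compact form displayed in the proposition.
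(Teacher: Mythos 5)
Your proposal is correct and follows essentially the same route as the paper's own proof: expand $\partial_j\Gamma_{ik}^j-\partial_i\Gamma_{jk}^j$ by the Leibniz rule, isolate the undifferentiated-$g^{jp}$ pieces to obtain $\mathcal{R}_{ik}^{g,2}$ after cancelling the mixed partials, replace $\partial g^{-1}$ via $\partial_l g^{jp}=-\sum g^{jm}g^{pq}\partial_l g_{mq}$, expand the quadratic Christoffel terms, and merge the $\tfrac12$- and $\tfrac14$-coefficient pieces by dummy-index relabelling and the symmetry of $g$ and $g^{-1}$. The only small organizational deviation is that you pre-simplify $\sum_j\Gamma_{jr}^j=\tfrac12\sum_{j,p}g^{jp}\partial_r g_{jp}$ before forming the product with $\Gamma_{ik}^r$, whereas the paper keeps the unsimplified nine-term product and performs the same cancellation at the very end by interchanging $p$ and $q$; both calculations land on the same coefficients and yield identical formulas.
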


\begin{proof}[Proof of Proposition \ref{prop:coeff_de_Ricci}]
By (\ref{RICCIformula}), we have for any $i,k$ in $\{1, \ldots,n\}$,
\begin{align*}
\mbox{Ric}_{ik}^{g}&=\sum_{j=1}^n\left(\partial_j \Gamma_{ik}^j-\partial_i
\Gamma_{jk}^j+\sum_{r=1}^n\left(\Gamma_{jr}^j\Gamma_{ik}^r-\Gamma_{ir}^j\Gamma_{jk}^r\right)\right)\\
&=\frac{1}{2}\sum_{j,p=1}^n\Bigg[(\partial_j
g^{jp})(\partial_ig_{kp}+\partial_k g_ip-\partial_p
g_{ik})+g^{jp}(\partial_{ji}g_{kp}+\partial_{jk}g_{ip}-\partial_{jp}g_{ik})\\
& \qquad -(\partial_i g^{jp})(\partial_jg_{kp}+\partial_k g_jp-\partial_p
g_{jk})-g^{jp}(\partial_{ij}g_{kp}+\partial_{ik}g_{jp}-\partial_{ip}g_{jk})\Bigg]\\
& \qquad +\sum_{j,r=1}^n\left(\Gamma_{jr}^j\Gamma_{ik}^r-\Gamma_{ir}^j\Gamma_{jk}^r\right) = \mathcal{R}_{ik}^{g,2}+\mathcal{R}_{ik}^{g,1},
\end{align*}
where $\mathcal{R}_{ik}^{g,2}, \mathcal{R}_{ik}^{g,1}$ are defined by
\begin{equation*}
\mathcal{R}_{ik}^{g,2}=\frac{1}{2}\sum_{j,p=1}^{n}g^{jp}\left(\partial_{jk}g_{ip}-\partial_{ik}g_{jp}-\partial_{jp}g_{ik}+\partial_{ip}g_{jk}\right),
\end{equation*}
and
\begin{multline}\label{R1}
\mathcal{R}_{ik}^{g,1}=\frac{1}{2}\sum_{j,p=1}^n\Bigg[(\partial_j
g^{jp})(\partial_ig_{kp}+\partial_k g_ip-\partial_p
g_{ik})-(\partial_i g^{jp})(\partial_jg_{kp}+\partial_k
g_jp-\partial_p
g_{jk})\Bigg]\\
+\sum_{j,r=1}^n\left(\Gamma_{jr}^j\Gamma_{ik}^r-\Gamma_{ir}^j\Gamma_{jk}^r\right).
\end{multline}
Now, we want to express the derivatives of coefficients of $g^{-1}$
in term of coefficients of $g$ and their derivatives. Writing that
$g^{-1}$ is the inverse of $g$, we obtain
\[
\delta_{ij} = \sum_{k=1}^n g^{ik}g_{kj}
\]
for any $i,j=1, \ldots,n$. Taking a partial derivative $\partial_l$
in the above expression yields for any $i,j, l=1, \ldots,n$
\[
\sum_{k=1}^n  \Bigl( g_{kj} \partial_l g^{ik} + g^{ik} \partial_l
g_{kj}\Bigr) =0,
\]
then multiplying with $g^{jm}$, we have
\[
\sum_{k=1}^n \Bigl(  g^{jm} g_{kj} \partial_l g^{ik} + g^{jm}g^{ik}\partial_l g_{kj}\Bigr)=0
\]
which gives
\[
-  \sum_{j,k=1}^n  \Bigl(g^{jm}g^{ik} \partial_l g_{kj}
\Bigr)=\sum_{j,k=1}^n \Bigl(g^{jm} g_{kj} \partial_l
g^{ik}\Bigr)=\sum_{k=1}^n \Bigl(\delta_{mk} \partial_l g^{ik}\Bigr)
=
\partial_l g^{im}.
\]
Coming back to \eqref{R1}, we obtain
\begin{align*}
\mathcal{R}_{ik}^{g,1}=&\frac{1}{2}\sum_{j,l,p,q}g^{jl}g^{pq}\bigg[-\partial_j
g_{lq}\Big(\partial_ig_{kp}+\partial_k g_{ip}-\partial_p
g_{ik}\Big)+\partial_i g_{lq}\Big(\partial_jg_{kp}+\partial_k
g_{jp}-\partial_p g_{jk}\Big)\bigg]\\
& \qquad + \frac{1}{4}\sum_{j,l,p,q}g^{jl}g^{pq}\bigg[\big(\partial_j
g_{pl}+\partial_pg_{jl}-\partial_l g_{jp}\big)\big(\partial_i
g_{kq}+\partial_kg_{iq}-\partial_q g_{ik}\big)\\
& \qquad \qquad -\big(\partial_i g_{pl}+\partial_pg_{il}-\partial_l
g_{ip}\big)\big(\partial_j g_{kq}+\partial_kg_{jq}-\partial_q
g_{jk}\big)\bigg].
\end{align*}
The required formula follows by interchanging $p$ and $q$ in the last sum and simplifying with the first sum.
\end{proof}

Applying the above formulas to (\ref{SCALARformula}) gives

\begin{proposition}\label{prop:coeff_scalaire}
The Scalar curvature of a metric $g$ can be written as,
\[
\mbox{\rm Scal}^g:= \mathcal{S}^{g,2} + \mathcal{S}^{g,1},
\]
with
\begin{equation*}
\mathcal{S}^{g,2}=\sum_{i,k,j,p=1}^{n}\left(g^{ik}g^{jp}-g^{ip}g^{jk}\right)\partial_{jk}g_{ip}
\end{equation*}
and
\begin{multline*}
\mathcal{S}^{g,1}\\
=\frac{1}{4}\sum_{i,k,j,l,p,q=1}^ng^{ik}g^{jl}
g^{pq}\Bigg(-4\partial_{j} g_{lq} \partial_{i} g_{kp}+4\partial_{j}
g_{lq} \partial_{p} g_{ik}+3\partial_{i} g_{lq} \partial_{k}
g_{jp}-\partial_{q} g_{jl}
\partial_{p} g_{ik}-2\partial_{q}
g_{il} \partial_{j} g_{kp}\Bigg).
\end{multline*}
\end{proposition}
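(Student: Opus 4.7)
The plan is to start from the scalar curvature formula \eqref{SCALARformula}, namely $\mbox{\rm Scal}^g = \sum_{i,k}g^{ik}\mbox{\rm Ric}^g_{ik}$, and to substitute the decomposition $\mbox{\rm Ric}^g_{ik} = \mathcal{R}^{g,2}_{ik} + \mathcal{R}^{g,1}_{ik}$ provided by Proposition~\ref{prop:coeff_de_Ricci}. Setting $\mathcal{S}^{g,r} := \sum_{i,k}g^{ik}\mathcal{R}^{g,r}_{ik}$ for $r=1,2$, the statement then reduces to two algebraic identities, which I would establish separately by systematically exploiting the symmetries $g^{ab}=g^{ba}$, $g_{ab}=g_{ba}$, and $\partial_{ab}=\partial_{ba}$, together with suitable relabelings of the dummy summation indices.

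For the second-order part, expanding the definition of $\mathcal{R}^{g,2}_{ik}$ gives
\[
\mathcal{S}^{g,2} = \frac{1}{2}\sum_{i,k,j,p}g^{ik}g^{jp}\bigl(\partial_{jk}g_{ip} - \partial_{ik}g_{jp} - \partial_{jp}g_{ik} + \partial_{ip}g_{jk}\bigr).
\]
By renaming the dummy pair $(i,k) \leftrightarrow (j,p)$ one checks that the first and fourth monomials inside the bracket produce the same sum, and likewise for the second and third. Thus the factor $1/2$ is absorbed and
\[
\mathcal{S}^{g,2} = \sum_{i,k,j,p}g^{ik}g^{jp}\partial_{jk}g_{ip} - \sum_{i,k,j,p}g^{ik}g^{jp}\partial_{ik}g_{jp}.
\]
A further relabeling in the second sum (sending the derivative onto $\partial_{jk}g_{ip}$) transforms the coefficient $g^{ik}g^{jp}$ into $g^{ip}g^{jk}$, yielding the claimed identity $\mathcal{S}^{g,2} = \sum(g^{ik}g^{jp}-g^{ip}g^{jk})\partial_{jk}g_{ip}$.

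For the first-order part, the contraction gives
\[
\mathcal{S}^{g,1} = \frac{1}{4}\sum_{i,k,j,l,p,q}g^{ik}g^{jl}g^{pq}\,\Xi_{ikjlpq},
\]
where $\Xi_{ikjlpq}$ denotes the bracket appearing in Proposition~\ref{prop:coeff_de_Ricci}. I would fully expand $\Xi$ into its constituent monomials of shape $\partial g\cdot \partial g$, group them according to the pattern of their derivative indices, and then systematically relabel the six summation indices --- using that each of the factors $g^{ik},g^{jl},g^{pq}$ is symmetric and that the triple of inverse metrics can be freely permuted in the sum --- in order to collapse like terms. The final integer coefficients $-4,4,3,-1,-2$ then emerge from counting, for each of the five target monomials, how many of the expanded terms contribute after reduction to a common form.

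The main obstacle is precisely the bookkeeping in this last step: the bracket $\Xi$ contains roughly eighteen monomials of closely related shape, and an honest proof requires a disciplined symmetrization. A clean way to organize the computation is to fix in advance a canonical labeling for each monomial (for instance, by ordering the indices of the three inverse metric factors and then adjusting the indices of the $\partial g$ factors accordingly), reduce every expanded term to this canonical form, and only then sum the contributions. Once this normalization is performed the coefficients match those in the statement, which completes the proof.
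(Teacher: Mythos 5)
Your plan is exactly the paper's: contract the Ricci decomposition of Proposition~\ref{prop:coeff_de_Ricci} against $g^{ik}$, handle $\mathcal{S}^{g,2}$ by the $(i,k)\leftrightarrow(j,p)$ relabeling followed by an $i\leftrightarrow j$ swap in the second sum, and reduce $\mathcal{S}^{g,1}$ by symmetrizing dummy indices so that the eighteen-or-so monomials of shape $\partial g\cdot\partial g$ collapse to five. Your derivation of $\mathcal{S}^{g,2}$ is complete and correct and matches the paper's.

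For $\mathcal{S}^{g,1}$, however, you have described the method (canonicalize each monomial, count contributions) without actually executing it, and you acknowledge this is the hard part. That step is essentially the entire content of this proposition: the coefficients $-4,4,3,-1,-2$ are not obvious and are precisely what must be checked. The paper carries out this bookkeeping concretely by splitting the contracted bracket into four sums $A,B,C,D$ (one per parenthesized block in $\mathcal{R}^{g,1}_{ik}$), simplifying each one separately using the symmetries $g^{ik}\partial_i g_{kp}=g^{ik}\partial_k g_{ip}$, exchange of the pairs $(j,l)\leftrightarrow(p,q)$, and exchange of $(i,k)\leftrightarrow(j,l)$ combined with $p\leftrightarrow q$, and then recombining: $A$ reduces to $g^{ik}g^{jl}g^{pq}(-2\partial_j g_{lq}\partial_i g_{kp}+\partial_j g_{lq}\partial_p g_{ik})$, $B$ to a single monomial $g^{ik}g^{jl}g^{pq}\partial_i g_{lq}\partial_k g_{jp}$, and similarly for $C$ and $D$. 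As written, your proposal would be accepted as a correct strategy but not as a proof; you should supply the explicit reductions of the four blocks (or an equivalent normalization table) so the reader can verify the final coefficients without redoing the symmetrization from scratch.
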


\begin{proof}[Proof of Proposition \ref{prop:coeff_scalaire}]
One has, $$Scal^g:=\sum_{i,k}g^{ik}Ric_{ik}^g.$$ We can write
$$Scal^g:=\mathcal{S}^{g,1}+\mathcal{S}^{g,2},$$
where
\begin{equation}\label{Formscal2}
\mathcal{S}^{g,2}=\sum_{i,k}g^{ik}\mathcal{R}_{ik}^{g,2}=
\frac{1}{2}\sum_{i,k,j,p=1}^{n}g^{ik}g^{jp}\left(\partial_{jk}g_{ip}-\partial_{ik}g_{jp}-\partial_{jp}g_{ik}+\partial_{ip}g_{jk}\right),
\end{equation}
and
\begin{align}\label{Formscal1}
\mathcal{S}^{g,1}
&=\sum_{i,k}g^{ik}\mathcal{R}_{ik}^{g,1}\nonumber\\
&=\frac{1}{4} \sum_{i,k,j,l,p,q=1}^n g^{ik}g^{jl} g^{pq} \bigg[-
\partial_j g_{lq} \left(\partial_{i} g_{kp} +
\partial_{k} g_{ip}- \partial_{p} g_{ik}\right) +  \partial_i g_{lq} \left(\partial_{j} g_{kp} + \partial_{k} g_{jp} - \partial_{p} g_{jk}\right)\nonumber\\
 &+   \left(\partial_{q} g_{jl} - \partial_{l} g_{jq}\right)
\left(\partial_{i} g_{kp} + \partial_{k} g_{ip} - \partial_{p}
g_{ik}\right) -    \left( \partial_{q} g_{il} - \partial_{l}
g_{iq}\right) \left(\partial_{j} g_{kp} + \partial_{k} g_{jp} -
\partial_{p} g_{jk}\right)\bigg].
\end{align}
To simplify the relation \eqref{Formscal1}, let us set
$$\mathcal{S}^{g,1}:=\frac{1}{4}(A+B+C+D),$$
where
\begin{eqnarray*}
A &:=& \sum_{i,k,j,l,p,q=1}^n
g^{ik}g^{jl} g^{pq} \Big(- \partial_j g_{lq} \left(\partial_{i}
g_{kp} + \partial_{k} g_{ip}- \partial_{p} g_{ik}\right)\Big)\\
B &:=& \sum_{i,k,j,l,p,q=1}^n
g^{ik}g^{jl} g^{pq}\Big(\partial_i g_{lq} \left(\partial_{j} g_{kp}
+ \partial_{k} g_{jp} - \partial_{p} g_{jk}\right)\Big)\\
C &:=& \sum_{i,k,j,l,p,q=1}^ng^{ik}g^{jl} g^{pq}\Big(\left(\partial_{q} g_{jl} - \partial_{l} g_{jq}\right)
\left(\partial_{i} g_{kp} + \partial_{k} g_{ip} - \partial_{p}
g_{ik}\right)  \Big)\\
D &:=& \sum_{i,k,j,l,p,q=1}^ng^{ik}g^{jl} g^{pq} \Big(-\left( \partial_{q} g_{il} - \partial_{l}
g_{iq}\right) \left(\partial_{j} g_{kp} + \partial_{k} g_{jp} -
\partial_{p} g_{jk}\right)\Big).
\end{eqnarray*}
As $g$ and $g^{-1}$ are symmetric, we have
\[
\sum_{i,k=1}^n g^{ik} \partial_ig_{kp} = \sum_{i,k=1} g^{ik}
\partial_kg_{ip}\]
then we get
$$A=\sum_{i,k,j,l,p,q=1}^n
g^{ik}g^{jl} g^{pq} \Big(- \partial_j g_{lq} \left(2\partial_{i}
g_{kp}- \partial_{p} g_{ik}\right)\Big).$$ Using the same idea to
exchange $(jl)$ and $(pq)$, we obtain
\[
\sum_{j,l,p,q=1}^n g^{jl} g^{pq} \partial_i g_{lq}\partial_jg_{kp} =
\sum_{j,l,p,q=1}^n g^{jl} g^{pq} \partial_i g_{lq}\partial_pg_{jk}\]
one has
$$B=\sum_{i,k,j,l,p,q=1}^n
g^{ik}g^{jl} g^{pq}\Big(\partial_i g_{lq}
\partial_{k} g_{jp} \Big).$$
Now exchanging $(ik)$ to $(jl)$ and $p$ to $q$, we have
\[\sum_{i,j,k,l,p,q}g^{ik}g^{jl} g^{pq} \partial_j g_{lq}\partial_pg_{ik} =
\sum_{i,j,k,l,p,q}g^{ik}g^{jl} g^{pq} \partial_i
g_{kp}\partial_qg_{jl},\quad \sum_{i,k=1}^n g^{ik} \partial_ig_{kp}
= \sum_{i,k=1} g^{ik}
\partial_kg_{ip}\]
we have
$$C= \sum_{i,k,j,l,p,q=1}^ng^{ik}g^{jl} g^{pq}\Big(2\partial_{j}g_{lq}\partial_{p} g_{ik} - \partial_{q} g_{jl}\partial_{p} g_{ik} -\partial_l g_{jq}\left( 2\partial_{i} g_{kp} - \partial_{p}
g_{ik}\right)  \Big).$$ Finally, by

\[ \sum_{i,k j,l=1}^n g^{ik} g^{jl}  \partial_q
g_{il}\partial_jg_{kp} = \sum_{j,l,p,q=1}^n g^{ik} g^{jl} \partial_q
g_{il}\partial_kg_{jp},
\]
\[
\sum_{i,k,p,q=1}^n g^{ik} g^{pq} \partial_l g_{iq}\partial_kg_{jp} =
\sum_{i,k,p,q=1}^n g^{ik} g^{pq} \partial_l g_{iq}\partial_pg_{jk},
\]
\[
\sum_{i,k,p,q=1}^n g^{ik} g^{pq} \partial_q
g_{il}\partial_pg_{jk}=\sum_{i,k,p,q=1}^n g^{ik} g^{pq} \partial_i
g_{lq}\partial_kg_{jp}
\]
and
\[\sum_{i,k,j,l=1}^n g^{ik} g^{jl} \partial_l
g_{iq}\partial_jg_{kp}= \sum_{i,k,j,l=1}^n g^{ik} g^{jl} \partial_i
g_{lq}\partial_kg_{jp} ,
\]
we have
$$D=   2\sum_{i,k,j,l,p,q=1}^ng^{ik}g^{jl} g^{pq} \Big(-\partial_{q} g_{il}\partial_{j} g_{kp} + \partial_{i}
g_{lq} \partial_{k} g_{jp}\Big).$$ Substituting the formula $A,B,C$
and $D$ in $\mathcal{S}^{g,1}$, we deduce by the use of the
following relation
 \[\sum_{j,l=1}  g^{jl}  \partial_j g_{lq} = \sum_{j,l=1}
g^{jl}  \partial_l g_{jq},
\] that
\begin{multline*}
\mathcal{S}^{g,1}=\\
\frac{1}{4}\sum_{i,k,j,l,p,q=1}^ng^{ik}g^{jl} g^{pq}\Bigg(-4\partial_{j}
g_{lq} \partial_{i} g_{kp}+4\partial_{j} g_{lq} \partial_{p}
g_{ik}+3\partial_{i} g_{lq} \partial_{k} g_{jp}-\partial_{q} g_{jl}
\partial_{p} g_{ik}-2\partial_{q}
g_{il} \partial_{j} g_{kp}\Bigg). 
\end{multline*}
Returning to the formula of $\mathcal{S}^{g,2}$, we use the following identifications
\[\sum_{i,k,j,p} g^{ik}g^{jp}\partial_{ik}g_{jp}=\sum_{i,k,j,p} g^{ik}g^{jp}\partial_{jp}g_{ik},\quad
\sum_{i,k,j,p} g^{ik}g^{jp}\partial_{jk}g_{ip}=\sum_{i,k,j,p}
g^{ik}g^{ip}\partial_{ip}g_{jk}.\] So, relation \eqref{Formscal2}
can be written as
$$\mathcal{S}^{g,2}=\sum_{i,k,j,p=1}^{n}g^{ik}g^{jp}\left(\partial_{jk}g_{ip}-\partial_{jp}g_{ik}\right),$$
from which we obtain
$$\mathcal{S}^{g,2}=\sum_{i,k,j,p=1}^{n}\left(g^{ik}g^{jp}-g^{ip}g^{jk}\right)\partial_{jk}g_{ip}.$$
\end{proof}

\subsection{Ricci and scalar curvature of diagonal metrics}\label{APPgdiag}

The purpose of this section is to specify the formulas of Ricci and scalar curvatures in the case of a diagonal metric. So we assume that there are smooth functions $f_1, \ldots, f_n: M \rightarrow \R$ such that 
\begin{eqnarray}\label{gdiag}
\begin{array}{l}
g_{ij}\equiv 0 \quad\forall i, j \in \{1, \ldots,n\} \mbox{ with } i \neq j,\\
g_{ii} = e^{2f_i} \quad \forall i=1, \ldots, n.
\end{array}
\end{eqnarray}
 By noting that
\[
\partial_k g_{ii} = 2 e^{2f_i} \partial_k f_i \quad \mbox{and} \quad \partial_{kl} g_{ii} = 2 e^{2f_i} \partial_{kl} f_i + 4 e^{2f_i} \partial_{k} f_i \partial_{l} f_i,
\]
for all $i,k,l$, Proposition \ref{prop:coeff_de_Ricci} yields:

\begin{proposition}
Assume that $g$ is diagonal of the form (\ref{gdiag}) then we have for any $i,k\in \{1, \ldots,n\}$
\[
\mbox{\rm Ric}_{ik}^{g}=\tilde{\mathcal{R}}_{ik}^{g,2}+\tilde{\mathcal{R}}_{ik}^{g,1},
\]
where 
\begin{eqnarray*}
\tilde{\mathcal{R}}_{ii}^{g,2} = - \sum_{j=1, j \neq i}^{n}\left(\partial_{ii}f_{j}+e^{2(f_i-f_j)}\partial_{jj}f_i\right)
\end{eqnarray*}
for all $i= 1, \ldots,n$,
\begin{eqnarray*}
\tilde{\mathcal{R}}_{ik}^{g,2} = - \sum_{j=1, j \notin \{i,k\}}^{n} \partial_{ik}f_{j} 
\end{eqnarray*}
for all $i,k\in \{1, \ldots,n\}$ with $i\neq k$,
\begin{multline*}
\tilde{\mathcal{R}}_{ii}^{g,1}=   \sum_{j=1, j\neq i}^n e^{2(f_i-f_j)} \partial_j f_{j} \partial_{j} f_{i} - \sum_{j=1, j\neq i}^n  (\partial_i f_{j})^2 +   \sum_{j=1, j\neq i}^n \partial_{i} f_{j}  \partial_{i} f_{i}\\
-   \sum_{j,p=1, j\neq i, p\notin \{i,j\}}^n e^{2(f_i-f_p)} \partial_{p} f_{j}  \partial_{p}f_{i} -  \sum_{j=1, j\neq i}^n e^{2(f_i-f_j)}  (\partial_{j} f_{i})^2
\end{multline*}
for all $i= 1, \ldots,n$, and 
\[
\tilde{\mathcal{R}}^{g,1}_{ik} = - \sum_{j=1, j\notin \{i,k\}}^n   \partial_i f_{j} \partial_{k} f_{j} +   \sum_{j=1, j\notin \{i,k\}}^n \partial_{k} f_{j} \partial_{i} f_{k} + \sum_{j=1, j\notin \{i,k\}}^n \partial_{i} f_{j} \partial_{k} f_{i} 
\]
for all $i,k \in \{1, \ldots,n\}$ with $i\neq k$. 
\end{proposition}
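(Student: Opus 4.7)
My plan is to specialize the general formulas of Proposition \ref{prop:coeff_de_Ricci} to the diagonal case, carefully tracking which terms produce pure second derivatives of the $f_i$'s (to land in $\tilde{\mathcal{R}}^{g,2}$) and which produce products of first derivatives (which must be collected with $\mathcal{R}^{g,1}$ to form $\tilde{\mathcal{R}}^{g,1}$). Since $g^{jp}=0$ unless $j=p$, and $g^{ii}=e^{-2f_i}$, every double sum of the form $\sum_{j,p}g^{jp}(\cdots)$ collapses to $\sum_j e^{-2f_j}(\cdots)|_{p=j}$, and similarly each sextuple sum in the formula for $\mathcal{R}^{g,1}_{ik}$ reduces to a triple sum indexed by $j$, $l=j$, $p=q$ (with an additional collapsing due to the fact that only diagonal components of $g$ are non-zero).

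First I would treat the second-order part. Substituting $\partial_{kl}g_{ii}=2e^{2f_i}\partial_{kl}f_i+4e^{2f_i}\partial_kf_i\partial_lf_i$ into $\mathcal{R}^{g,2}_{ik}$ and separating the two additive pieces, the pure-second-derivative contribution is exactly the sum appearing in $\tilde{\mathcal{R}}^{g,2}_{ik}$, while the $\partial_kf_i\partial_lf_i$ contribution is a quadratic form in first derivatives that I set aside for later. For the diagonal case $i=k$, the two terms $\partial_{ik}g_{jp}$ and $\partial_{jp}g_{ik}$ each produce an $e^{2(f_i-f_j)}\partial_{jj}f_i$-type term after multiplication by $g^{jj}$, whereas the off-diagonal ($i\neq k$) case kills the diagonal-in-$j$ contribution that would otherwise produce a $\partial_{ik}f_j$ for $j\in\{i,k\}$ (because then $g_{ij}$ or $g_{jk}$ vanishes); hence only $j\notin\{i,k\}$ survives, yielding the stated expression for $\tilde{\mathcal{R}}^{g,2}_{ik}$.

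Next I would tackle the first-order part. Using $g^{jl}=e^{-2f_j}\delta_{jl}$ and $g^{pq}=e^{-2f_p}\delta_{pq}$, the formula for $\mathcal{R}^{g,1}_{ik}$ reduces to a sum in $(j,p)$, and each factor $\partial_\star g_{\star\star}$ that survives has $\star\star$ diagonal and is rewritten as $2e^{2f_\star}\partial_\star f_\star$. The exponentials collect according to the pattern of indices, producing factors of the form $e^{2(f_i-f_j)}$, $e^{2(f_i-f_p)}$, etc. I then add the leftover first-derivative piece originating from $\mathcal{R}^{g,2}_{ik}$ (the $\partial_kf_i\partial_lf_i$-type contributions noted above). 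After grouping by exponential prefactor and relabelling dummy indices, the terms should organize themselves into the five families listed in $\tilde{\mathcal{R}}^{g,1}_{ii}$ (for $i=k$) and the three families listed in $\tilde{\mathcal{R}}^{g,1}_{ik}$ (for $i\neq k$).

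The main obstacle will be the bookkeeping in the first-order computation: there are many sums of products of three exponentials times two derivative factors, and several cancellations among them depend on interchanging dummy indices $(j,p)$ and using $g^{jp}=g^{pj}$ and the symmetry $\partial_jf_i\partial_pf_l=\partial_pf_l\partial_jf_i$. In particular, the terms involving $\sum_{j,p:\;j\neq i,\;p\notin\{i,j\}}$ in $\tilde{\mathcal{R}}^{g,1}_{ii}$ arise precisely by isolating the cases $p=i$, $p=j$ and $p\notin\{i,j\}$ within the collapsed sextuple sum; the same index case-analysis yields the compact form in the off-diagonal case. Once this dissection is performed, the identification with the stated formulas is a matter of regrouping, which I would verify family by family.
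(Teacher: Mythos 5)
Your plan is exactly the computation the paper leaves implicit: the paper prefaces the proposition only with the observation that $\partial_k g_{ii}=2e^{2f_i}\partial_k f_i$ and $\partial_{kl}g_{ii}=2e^{2f_i}\partial_{kl}f_i+4e^{2f_i}\partial_k f_i\,\partial_l f_i$ and then states ``Proposition \ref{prop:coeff_de_Ricci} yields,'' so the intended proof is precisely the substitution-and-bookkeeping you describe. Your decomposition is also correctly organised: the pure second derivatives from $\partial_{kl}g_{ii}$ populate $\tilde{\mathcal{R}}^{g,2}$, while the residual $4e^{2f_i}\partial_k f_i\partial_l f_i$ piece from $\mathcal{R}^{g,2}_{ik}$ must be absorbed into $\tilde{\mathcal{R}}^{g,1}_{ik}$ along with the collapsed $\mathcal{R}^{g,1}_{ik}$, which matches how the paper's $\tilde{\mathcal{R}}^{g,1}$ and $\tilde{\mathcal{R}}^{g,2}$ are actually arranged.

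One small imprecision worth correcting before you carry out the bookkeeping: in the off-diagonal case $i\neq k$, the reason $j\in\{i,k\}$ contributes nothing is \emph{not} that $g_{ij}$ or $g_{jk}$ vanishes. For $j=i$ we have $g_{ij}=g_{ii}\neq 0$ and the two nonzero terms $\frac12 e^{-2f_i}\partial_{ik}g_{ii}$ (from $\partial_{jk}g_{ip}$) and $-\frac12 e^{-2f_i}\partial_{ik}g_{jj}$ cancel each other; the symmetric cancellation, via $\partial_{ip}g_{jk}$, handles $j=k$. Only for $j\notin\{i,k\}$ do $g_{ij}$, $g_{jk}$ and $g_{ik}$ all vanish, leaving a single nonzero contribution $-\frac12 e^{-2f_j}\partial_{ik}g_{jj}$. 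Your conclusion is right, but the mechanism is a cancellation between surviving terms rather than an individual vanishing, and getting this right matters when you later track the quadratic first-derivative residues.
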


Morever, Proposition \ref{prop:coeff_scalaire} gives:

\begin{proposition}
Assume that $g$ is diagonal of the form (\ref{gdiag}) then we have
\[
\mbox{\rm Scal}^{g}=\tilde{\mathcal{S}}^{g,2}+\tilde{\mathcal{S}}^{g,1},
\]
with
\begin{eqnarray*}
\tilde{\mathcal{S}}^{g,2} =  - 2 \sum_{i,j=1, j\neq i}^n e^{-2f_i}\partial_{ii}f_{j}
\end{eqnarray*}
and
\begin{eqnarray*}
\tilde{\mathcal{S}}^{g,1} =  2 \sum_{i, j=1, i\neq j}^n e^{-2f_i} \partial_i f_{i} \partial_{i} f_{j} - 2 \sum_{i, j=1, j\neq i}^n e^{-2f_i} (\partial_i f_{j})^2 
-   \sum_{i, j,p=1, j\neq i, p\notin \{i,j\}}^n e^{-2f_p} \partial_{p} f_{j}  \partial_{p}f_{i}.
\end{eqnarray*}
\end{proposition}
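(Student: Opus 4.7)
The plan is to prove this proposition by reducing it to the diagonal Ricci formula already established just above in the paper, rather than substituting directly into the six-fold sum of Proposition \ref{prop:coeff_scalaire}. The key observation is that when $g$ is diagonal, the inverse matrix $g^{-1}$ is also diagonal with $g^{ii}=e^{-2f_i}$ and $g^{ik}=0$ for $i\neq k$. Consequently, the general formula $\mbox{Scal}^g=\sum_{i,k=1}^n g^{ik}\mbox{Ric}_{ik}^g$ collapses to
\[
\mbox{Scal}^g=\sum_{i=1}^n e^{-2f_i}\mbox{Ric}_{ii}^g=\sum_{i=1}^n e^{-2f_i}\bigl(\tilde{\mathcal{R}}_{ii}^{g,2}+\tilde{\mathcal{R}}_{ii}^{g,1}\bigr),
\]
with the two contributions $\tilde{\mathcal{R}}_{ii}^{g,2}$ and $\tilde{\mathcal{R}}_{ii}^{g,1}$ provided by the previous proposition. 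Notice that only the diagonal Ricci components enter, which makes the off-diagonal formulas irrelevant here.

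First, I would treat the second-order part. Substituting the expression for $\tilde{\mathcal{R}}_{ii}^{g,2}$ gives
\[
\sum_{i=1}^n e^{-2f_i}\tilde{\mathcal{R}}_{ii}^{g,2}=-\sum_{\substack{i,j=1\\ j\neq i}}^n e^{-2f_i}\partial_{ii}f_j-\sum_{\substack{i,j=1\\ j\neq i}}^n e^{-2f_i}e^{2(f_i-f_j)}\partial_{jj}f_i.
\]
Swapping the dummy indices $i\leftrightarrow j$ in the second sum, the exponential factor becomes $e^{-2f_i}$, and the sum equals the first one. Adding them yields exactly $\tilde{\mathcal{S}}^{g,2}$.

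Next, I would handle the first-order part. Multiplying $\tilde{\mathcal{R}}_{ii}^{g,1}$ by $e^{-2f_i}$ systematically simplifies the exponentials: factors $e^{-2f_i}\cdot e^{2(f_i-f_j)}$ become $e^{-2f_j}$, and $e^{-2f_i}\cdot e^{2(f_i-f_p)}$ becomes $e^{-2f_p}$. The five terms in $\tilde{\mathcal{R}}_{ii}^{g,1}$ then reorganize as follows, after routine renaming of indices: the first term ($\sum_{j\neq i}e^{2(f_i-f_j)}\partial_jf_j\,\partial_jf_i$) and the third term ($\sum_{j\neq i}\partial_if_j\,\partial_if_i$) each produce $\sum_{i\neq j}e^{-2f_i}\partial_if_i\,\partial_if_j$ and so combine with coefficient $2$; the second and fifth terms likewise combine into $-2\sum_{i\neq j}e^{-2f_i}(\partial_if_j)^2$; the fourth term directly gives the triple sum $-\sum_{j\neq i,\, p\notin\{i,j\}}e^{-2f_p}\partial_pf_j\,\partial_pf_i$. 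Summing these three pieces reproduces $\tilde{\mathcal{S}}^{g,1}$ verbatim.

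The only delicate point is the symmetrization in Step 2 and the matching of index sets in Step 3: one must check that each pair of terms produced by relabeling really runs over the same set $\{i,j : i\neq j\}$ (and similarly for the triple-index constraint $j\neq i$, $p\notin\{i,j\}$ in the fourth term). This is purely a bookkeeping exercise with no geometric obstacle, and should fit in a short display-by-display verification once the reduction $\mbox{Scal}^g=\sum_i e^{-2f_i}\mbox{Ric}_{ii}^g$ has been noted.
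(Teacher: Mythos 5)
Your proposal is correct and takes a genuinely different route from the paper. The paper obtains the diagonal scalar curvature formula by substituting the first- and second-order derivatives of the diagonal metric directly into the general six-index expression for $\mathcal{S}^{g,2}$ and $\mathcal{S}^{g,1}$ given in Proposition \ref{prop:coeff_scalaire}. You instead observe that a diagonal metric has diagonal inverse $g^{ii}=e^{-2f_i}$, so that
\[
\mbox{Scal}^g=\sum_{i=1}^n e^{-2f_i}\,\mbox{Ric}^g_{ii},
\]
and then plug in the already-established diagonal Ricci components $\tilde{\mathcal{R}}_{ii}^{g,2}$ and $\tilde{\mathcal{R}}_{ii}^{g,1}$. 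This is cleaner: it avoids re-deriving the diagonal simplifications from the raw six-fold sums, involves only sums with at most three indices, and makes the logical dependence on the preceding proposition explicit instead of parallel. Your index bookkeeping checks out: in the second-order part, the swap $i\leftrightarrow j$ turns $e^{-2f_i}e^{2(f_i-f_j)}\partial_{jj}f_i$ into $e^{-2f_i}\partial_{ii}f_j$, giving the factor of $2$; in the first-order part, terms $1$ and $3$ combine to $2\sum_{i\neq j}e^{-2f_i}\partial_if_i\,\partial_if_j$, terms $2$ and $5$ combine (after the same swap in term $5$) to $-2\sum_{i\neq j}e^{-2f_i}(\partial_if_j)^2$, and term $4$ gives the triple sum unchanged since $e^{-2f_i}e^{2(f_i-f_p)}=e^{-2f_p}$ and the constraint $p\notin\{i,j\}$, $j\neq i$ is symmetric in the relevant way. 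The result matches $\tilde{\mathcal{S}}^{g,1}$ exactly, so the argument is complete and sound.
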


\subsection{Scalar curvature of a metric with diagonal perturbation}\label{APPPertMetric}
Given a Riemannian metric $\bar{g}=(\bar{g}_{ij})$ in $\R^n$ with $n\geq 3$ of the form 
\begin{eqnarray}\label{gorthonormal}
\bar{g} = \bar{A}^\mathsf{tr}\bar{A} \quad \mbox{with} \quad \bar{A}_{\bar{x}} = \mbox{Id}_n,
\end{eqnarray}
where $\bar{x}\in \R^n$ is fixed and $x\in \R^n\mapsto \bar{A}_x\in GL_n(\R)$ is a smooth mapping, we consider a perturbation $\hat{g}$ of $\bar{g}$ on a neighborhood of $\bar{x}$ of the form 
\begin{eqnarray}\label{bargperturb}
\hat{g} = \bar{A}^\mathsf{tr}D\bar{A}
\end{eqnarray}
where $D$ is a $n\times n$ diagonal matrix with coefficients $(e^{2f_1}, \ldots,e^{2f_n})$ and $f_1, \ldots, f_n$ are smooth functions on a neighborhood of $\bar{x}$ valued in a neighborhood of $0\in \R$ (in such a way that $\hat{g}$ remains positive definite near $\bar{x}$). Using that (we denote the coefficientss of $\bar{A}$ by $(\bar{a}_{ij})$)
\[
\hat{g}_{ij} = \bar{g}_{ij} + \sum_{r=1}^n \left(e^{2f_r}-1\right) \bar{a}_{ri}\bar{a}_{rj}  \qquad \forall i=j=1, \ldots,n,
\]
we verify that the first and second-order partial derivatives of $\hat{g}$ are given by 
\begin{multline}\label{7Mai0}
\partial_k \hat{g}_{ij} =  \partial_k \bar{g}_{ij} + 2 \sum_{r=1}^n \Bigl(e^{2f_r} \left(\partial_{k} f_r\right) \bar{a}_{ri}\bar{a}_{rj}\Bigr) + \sum_{r=1}^n \Bigl(\left(e^{2f_r}-1\right) \left[ \left(\partial_k \bar{a}_{ri}\right) \bar{a}_{rj} + \bar{a}_{ri} \left( \partial_k \bar{a}_{rj}\right) \right] \Bigr)
\end{multline}
and
\begin{multline}\label{7Mai1}
\partial_{kl} \hat{g}_{ij} = \partial_{kl} \bar{g}_{ij} + 2 \sum_{r=1}^n \Bigl(e^{2f_r} \left(\partial_{kl} f_r\right) \bar{a}_{ri}\bar{a}_{rj}\Bigr) + 4 \sum_{r=1}^n \Bigl(e^{2f_r} \left(\partial_{k} f_r\right)  \left(\partial_{l} f_r\right) \bar{a}_{ri} \bar{a}_{rj}\Bigr) \\
+ 2 \sum_{r=1}^n \Bigl(e^{2f_r} \left(\partial_{k} f_r\right)  \left[ \left(\partial_l \bar{a}_{ri}\right) \bar{a}_{rj} + \bar{a}_{ri} \left( \partial_l \bar{a}_{rj}\right) \right]\Bigr) + 2 \sum_{r=1}^n \Bigl(e^{2f_r} \left(\partial_{l} f_r\right)  \left[ \left(\partial_k \bar{a}_{ri}\right) \bar{a}_{rj} + \bar{a}_{ri} \left( \partial_k \bar{a}_{rj}\right) \right]\Bigr)\\
+ \sum_{r=1}^n \Bigl(\left(e^{2f_r}-1\right) \left[ \left(\partial_{kl} \bar{a}_{ri}\right) \bar{a}_{rj} + \left(\partial_k \bar{a}_{ri}\right) \left( \partial_l \bar{a}_{rj}\right) + \left( \partial_l \bar{a}_{ri}\right) \left( \partial_k \bar{a}_{rj}\right)+ \bar{a}_{ri} \left( \partial_{kl} \bar{a}_{rj}\right) \right] \Bigr),
\end{multline}
for all $i,j,k,l=1, \ldots,n$. Then, by applying the scalar curvature formulas given in Proposition \ref{prop:coeff_scalaire} at $\bar{x}$ where $\bar{A}_{\bar{x}} = \mbox{Id}_n$ and $\hat{g}_{\bar{x}}=D(\bar{x})$, we obtain:

\begin{proposition}\label{PROPgorthonormal}
Assume that $g$ is diagonal of the form (\ref{gorthonormal}) then we have 
\begin{eqnarray}\label{Scalhatg}
\mbox{Scal}^{\hat{g}}_{\bar{x}} = \mbox{Scal}^{\bar{g}}_{\bar{x}} + B^{\hat{g}}_{\bar{x}} + Q^{\hat{g}}_{\bar{x}} +  E^{\hat{g}}_{\bar{x}},
\end{eqnarray}
where $B^{\hat{g}}_{\bar{x}}$, which is linear in the second-order derivatives of $f_1, \ldots, f_n$ at $\bar{x}$ is given by
\begin{eqnarray}\label{ScalhatgB}
B^{\hat{g}}_{\bar{x}} = - 2\sum_{i,j=1, i\neq j}^n \Bigl(  e^{-2f_i(\bar{x})} \partial_{i}^2 f_j(\bar{x}) \Bigr)
\end{eqnarray}
where $Q^{\hat{g}}_{\bar{x}}$, which is quadratic in the first-order derivatives of $f_1, \ldots, f_n$ at $\bar{x}$, is given by
\begin{multline}\label{ScalhatgQ}
Q^{\hat{g}}_{\bar{x}} = - 2\sum_{i,j=1, i\neq j}^n \Bigl(  e^{-2f_i(\bar{x})} \left(\partial_{i} f_j(\bar{x}) \right)^2 \Bigr)\\
-  \sum_{i,j,k=1, i\neq j}^n \Bigl( e^{-2f_k(\bar{x})}  \partial_k f_{i}(\bar{x})  \partial_{k} f_{j}(\bar{x}) \Bigr)  + 4 \sum_{i,j=1,i\neq j}^n  \Bigl( e^{-2f_i(\bar{x})} \partial_{i} f_{i}(\bar{x}) \partial_i f_{j} (\bar{x}) \Bigr),
\end{multline}
and where $E^{\hat{g}}$ is an error term which can be written as 
\begin{multline}\label{ScalhatgE}
E^{\hat{g}}_{\bar{x}} = \Phi_0 \left(f_1(\bar{x}), \cdots, f_n(\bar{x}), \partial^1 \bar{A}(\bar{x}), \partial^2 \bar{A}(\bar{x})\right) \\
+ \sum_{r=1}^n \sum_{k=1}^n \Bigl( \Phi_{i,k} \left(f_1(\bar{x}),\cdots, f_n(\bar{x}), \partial^1 \bar{A}(\bar{x}), \partial^2 \bar{A}(\bar{x})\right) \left(\partial_kf_r(\bar{x})\right)\Bigr)
\end{multline}
where $ \Phi_{0}$ and $\Phi_{1,k}, \Phi_{2,k}$ with $k=1, \ldots,n$, are smooth function in the variables $f_2, f_3, \bar{a}_{ij}$, $\partial^1_k \bar{a}_{ij}$ and $\partial^2_{kl} \bar{a}_{ij}$ such that  $\Phi_0 (0,0,\cdot, \cdot, \cdot)=0$. 
\end{proposition}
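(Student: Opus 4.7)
The plan is to substitute the expressions \eqref{7Mai0} and \eqref{7Mai1} for the first- and second-order derivatives of $\hat{g}$ into the formula $\mbox{Scal}^{\hat{g}} = \mathcal{S}^{\hat{g},2} + \mathcal{S}^{\hat{g},1}$ from Proposition \ref{prop:coeff_scalaire}, evaluate the result at $\bar{x}$, and collect the resulting terms according to their degree in the derivatives of the $f_r$'s. The key simplification comes from $\bar{A}_{\bar{x}} = \mbox{Id}_n$, which forces $\bar{a}_{ri}(\bar{x}) = \delta_{ri}$, hence $\hat{g}_{\bar{x}} = D(\bar{x})$ and its inverse is diagonal with entries $\hat{g}^{ij}(\bar{x}) = \delta_{ij} e^{-2f_i(\bar{x})}$. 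This collapses the four- and six-fold contractions appearing in $\mathcal{S}^{\hat{g},2}$ and $\mathcal{S}^{\hat{g},1}$ to expressions involving only a few surviving indices.

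First I would isolate $B^{\hat{g}}_{\bar{x}}$: the only source of $\partial^2 f$ is the summand $2\sum_r e^{2f_r}(\partial_{kl}f_r)\bar{a}_{ri}\bar{a}_{rj}$ of \eqref{7Mai1}, which contributes only through $\mathcal{S}^{\hat{g},2}$ since $\mathcal{S}^{\hat{g},1}$ contains no second derivatives of $\hat{g}$. At $\bar{x}$ this piece becomes $2\delta_{ij}e^{2f_i(\bar{x})}\partial_{kl}f_i(\bar{x})$, and plugging it into $\sum_{i,k,j,p}(\hat{g}^{ik}\hat{g}^{jp}-\hat{g}^{ip}\hat{g}^{jk})\partial_{jk}\hat{g}_{ip}(\bar{x})$ together with the diagonal inverse yields \eqref{ScalhatgB} after straightforward simplification. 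Next I would isolate $Q^{\hat{g}}_{\bar{x}}$: the quadratic-in-$\partial f$ terms arise from two sources, namely the summand $4\sum_r e^{2f_r}(\partial_k f_r)(\partial_l f_r)\bar{a}_{ri}\bar{a}_{rj}$ of \eqref{7Mai1} feeding through $\mathcal{S}^{\hat{g},2}$, and the self-product of the linear-in-$\partial f$ piece $2\sum_r e^{2f_r}(\partial_k f_r)\bar{a}_{ri}\bar{a}_{rj}$ of \eqref{7Mai0} feeding through each of the five quadratic monomials making up $\mathcal{S}^{\hat{g},1}$. After specialization at $\bar{x}$, both sources reduce to linear combinations of scalar expressions of the form $e^{-2f_k(\bar{x})}\partial_k f_i(\bar{x})\,\partial_k f_j(\bar{x})$; adding the two contributions and regrouping according to which indices among $i,j,k$ coincide will produce the three families appearing in \eqref{ScalhatgQ}.

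All remaining pieces --- the pure $\bar{A}$ contributions coming from $\partial_k\bar{g}_{ij}$, $\partial_{kl}\bar{g}_{ij}$ and the $(e^{2f_r}-1)[\cdots]$ corrections in \eqref{7Mai0}--\eqref{7Mai1}, together with the cross terms linear in a single $\partial_k f_r$ and multiplied by derivatives of $\bar{A}$ --- constitute $E^{\hat{g}}_{\bar{x}}$. After substitution they are manifestly smooth in $f_1(\bar{x}), \ldots, f_n(\bar{x})$ and polynomial in $\bar{a}_{ij}(\bar{x})$, $\partial_k\bar{a}_{ij}(\bar{x})$, $\partial^2_{kl}\bar{a}_{ij}(\bar{x})$, yielding the decomposition \eqref{ScalhatgE}. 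The relation $\Phi_0(0,\ldots,0,\cdot,\cdot,\cdot) = 0$ then follows from the sanity check that when $f_1 \equiv \cdots \equiv f_n \equiv 0$ one has $\hat{g} = \bar{g}$, so $B = Q = 0$, the linear-in-$\partial f$ part of $E^{\hat{g}}_{\bar{x}}$ also vanishes, and \eqref{Scalhatg} reduces to $\Phi_0(0,\ldots,0,\partial^1\bar{A}(\bar{x}),\partial^2\bar{A}(\bar{x})) = 0$ for every admissible $\bar{A}$. The main obstacle will be the algebraic bookkeeping for $Q^{\hat{g}}_{\bar{x}}$: the bilinear piece from \eqref{7Mai0} must be inserted into each of the five quadratic monomials of $\mathcal{S}^{g,1}$ and combined with the $\mathcal{S}^{\hat{g},2}$ contribution, requiring several reindexings and careful use of the symmetries $\hat{g}^{ij}(\bar{x}) = \hat{g}^{ji}(\bar{x})$ in order to match the precise coefficients $-2$, $-1$, $+4$ appearing in \eqref{ScalhatgQ}.
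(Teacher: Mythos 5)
Your proposal is correct and follows essentially the same route as the paper: substitute \eqref{7Mai0}--\eqref{7Mai1} into the scalar-curvature formula of Proposition~\ref{prop:coeff_scalaire}, evaluate at $\bar{x}$ using $\bar{A}_{\bar{x}}=\mathrm{Id}_n$ so that $\hat{g}_{\bar{x}}=D(\bar{x})$ is diagonal with diagonal inverse, and then sort the surviving terms by their degree in the derivatives of the $f_r$'s, isolating $B$, $Q$, and $E$. Your indirect check that $\Phi_0(0,\ldots,0,\cdot,\cdot)=0$ (by taking $f_1\equiv\cdots\equiv f_n\equiv 0$ so that $\hat{g}=\bar{g}$, $B=Q=0$, and the $\partial_k f_r$-linear part of $E$ vanishes, leaving $\Phi_0=\mathrm{Scal}^{\hat{g}}_{\bar{x}}-\mathrm{Scal}^{\bar{g}}_{\bar{x}}=0$ for arbitrary admissible $\bar{A}$) is a clean alternative to direct inspection and is logically sound.
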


\begin{proof}[Proof of Proposition \ref{PROPgorthonormal}]
Formulas of Proposition \ref{prop:coeff_scalaire}, along with $\hat{g}_{\bar{x}}=D(\bar{x})$, yield 
\[
\mbox{\rm Scal}_{\bar{x}}^{\hat{g}}:= \mathcal{S}_{\bar{x}}^{\hat{g},2} + \mathcal{S}_{\bar{x}}^{\hat{g},1},
\]
with
\begin{eqnarray*}
\mathcal{S}^{\hat{g},2}_{\bar{x}} & = & \sum_{i,k,j,p=1}^{n}\left(\hat{g}^{ik}_{\bar{x}}\hat{g}_{\bar{x}}^{jp}- \hat{g}_{\bar{x}}^{ip} \hat{g}_{\bar{x}}^{jk}\right)\partial_{jk} \hat{g}_{ip} (\bar{x})\\
& = & \sum_{i,j=1}^{n} \hat{g}^{i}_{\bar{x}}\hat{g}_{\bar{x}}^{j} \partial_{ij} \hat{g}_{ij} (\bar{x}) - \sum_{i,j=1}^{n} \hat{g}_{\bar{x}}^{i} \hat{g}_{\bar{x}}^{j}\partial_{jj} \hat{g}_{ii} (\bar{x}) \\
& = & \sum_{i,j=1, i\neq j}^{n} e^{-2f_i(\bar{x})} e^{-2f_j(\bar{x})} \Bigl(  \partial_{ij} \hat{g}_{ij}(\bar{x})  - \partial_{jj} \hat{g}_{ii}(\bar{x})  \Bigr)
\end{eqnarray*}
and (from now on we omit $\bar{x}$)
\begin{multline*}
\mathcal{S}_{\bar{x}}^{\hat{g},1} =\frac{1}{4}\sum_{i,j,p=1}^n
e^{-2f_i}e^{-2f_j}e^{-2f_p}
\Bigg(-4\partial_{j} \hat{g}_{jp} \partial_{i} \hat{g}_{ip}
+4\partial_{j} \hat{g}_{jp} \partial_{p} \hat{g}_{ii}
+3\partial_{i} \hat{g}_{jp} \partial_{i} \hat{g}_{jp}\\
-\partial_{p} \hat{g}_{jj}\partial_{p} \hat{g}_{ii}
-2\partial_{p} \hat{g}_{ij} \partial_{j} \hat{g}_{ip}\Bigg).
\end{multline*}
The first term $\mathcal{S}^{\hat{g},2}_{\bar{x}}$ can be written as 
\begin{multline*}
\mathcal{S}^{\hat{g},2}_{\bar{x}}  =  \sum_{i,j=1, i\neq j} \Bigl(  \partial_{ij} \bar{g}_{ij}  - \partial_{jj} \bar{g}_{ii}  \Bigr) \\
 + \sum_{i,j=1, i\neq j}^{n} e^{-2f_i} e^{-2f_j} \Bigl(  \partial_{ij} \hat{g}_{ij}  - \partial_{jj} \hat{g}_{ii}  \Bigr) - \sum_{i,j=1, i\neq j} \Bigl(  \partial_{ij} \bar{g}_{ij}  - \partial_{jj} \bar{g}_{ii}  \Bigr),
\end{multline*}
which by using  (\ref{7Mai1}) is equal to
\begin{eqnarray*}
\mathcal{S}^{\hat{g},2}_{\bar{x}}  =  \sum_{i,j=1, i\neq j} \Bigg( \Bigl(  \partial_{ij} \bar{g}_{ij}  - \partial_{jj} \bar{g}_{ii}  \Bigr) - 2   \Bigl(  e^{-2f_j} \partial_{j}^2 f_i \Bigr) - 4 \Bigl(e^{-2f_j} \left(\partial_{j} f_i\right)  \left(\partial_{j} f_i\right)\Bigr) \Bigg) + \Delta_2
\end{eqnarray*}
with

\begin{multline*}
\Delta_2 = \sum_{i,j=1, i\neq j}^{n} \left( e^{-2f_i} e^{-2f_j}-1\right) \Bigl(  \partial_{ij} \bar{g}_{ij}  - \partial_{jj} \bar{g}_{ii}  \Bigr) \\
+  \sum_{i,j=1, i\neq j} \sum_{r=i,j} e^{-2f_i} e^{-2f_j} \Bigg( 2  e^{2f_r} \partial_{i} f_r\left[ \partial_j \bar{a}_{ri} \, \delta_{rj} + \delta_{ri} \, \partial_j \bar{a}_{rj} \right] + 2  e^{2f_r} \partial_{j} f_r  \left[ \partial_i \bar{a}_{ri}\, \delta_{rj} + \delta_{ri} \, \partial_i \bar{a}_{rj} \right]\\
+ \left(e^{2f_r}-1\right) \left[ \partial_{ij} \bar{a}_{ri}\, \delta_{rj} + \partial_i \bar{a}_{ri}\,  \partial_j \bar{a}_{rj} + \partial_j \bar{a}_{ri}\, \partial_i \bar{a}_{rj}+ \delta_{ri} \, \partial_{ij} \bar{a}_{rj} \right] \Bigr)  \Bigg)\\
-  \sum_{i,j=1, i\neq j} \Bigg( 2 e^{2f_i} \partial_{j} f_i  \left[ \partial_j \bar{a}_{ii}  +   \partial_j \bar{a}_{ii} \right] + 2 e^{2f_i} \partial_{j} f_i  \left[ \partial_j \bar{a}_{ii} +   \partial_j \bar{a}_{ii} \right]\\
+ \left(e^{2f_i}-1\right) \left[ \partial_{j}^2 \bar{a}_{ii} + \partial_j \bar{a}_{ii}\,  \partial_j \bar{a}_{ii} +  \partial_j \bar{a}_{ii}\,  \partial_j \bar{a}_{ii}+   \partial_{jj}^2 \bar{a}_{ii} \right] \Bigg),
\end{multline*}
where for all $i,j,k,l,$ $\partial_{kl} \bar{g}_{ij}$ can be written as
\[
\partial_{kl} \bar{g}_{ij} =  \sum_{r=1}^n \Bigl( \partial_{kl} \bar{a}_{ri}\, \delta_{rj} + \partial_k \bar{a}_{ri}\, \partial_l \bar{a}_{rj} + \partial_l \bar{a}_{ri}\, \partial_k \bar{a}_{rj}+ \delta_{ri} \, \partial_{kl} \bar{a}_{rj}  \Bigr).
\]

Hence by pluging the above formula for $\partial_{kl} \hat{g}_{ij}$, we obtain
\begin{equation*}
\mathcal{S}^{g,2}=\sum_{i,k,j,p=1}^{n}\left(\hat{g}^{ik}\hat{g}^{jp}-\hat{g}^{ip}\hat{g}^{jk}\right)\partial_{jk}g_{ip}
\end{equation*}
\end{proof}

\section{Proof of Lemma \ref{PROPTnGenLEM}}\label{PROPTnGenLEMProof}
Let $x\in \R^n$ be fixed. The pullback $\bar{g}$ of $\tilde{g}_0$ by the affine diffeomorphism $\Psi: \R^n \rightarrow \R^n$ defined by 
\[
\Psi(z) := x + P_{x}z \qquad \forall z \in \R^n,
\]
on a neighborhood of $0=\Psi^{-1}(x)$ is given by (using (\ref{5fev1}))
\[
\bar{g}_z = P_{x}^\mathsf{tr} \left(\tilde{g}_0\right)_{\Psi^{x}(z)} P_{x} = P_{x}^\mathsf{tr}\left(Q_{\Psi(z)}^\mathsf{tr}Q_{\Psi(z)}\right) P_{x} = \left(Q_{\Psi(z)}P_{x}\right)^\mathsf{tr} \left(Q_{\Psi(z)}P_{x}\right) \mbox{ where } P_{x}Q_{\Psi(0)}=\mbox{Id}_n,
\]
and the pullback $\hat{g}$ of $\tilde{g}$ by $\Psi$ by (using (\ref{gtildeformula}))
\begin{eqnarray*}
\hat{g}_z = P_{x}^\mathsf{tr}\tilde{g}_{\Psi(z)} P_{x} & = & P_{x}^\mathsf{tr} Q_{\Psi(z)}^\mathsf{tr} D \left(\Psi(z)\right) Q_{\Psi(z)}P_{x}\\
& = & \left(Q_{\Psi(z)}P_{x}\right)^\mathsf{tr} D \left(x+ P_{x} z\right) \left(Q_{\Psi(z)}P_{x}\right).
\end{eqnarray*}
So, by setting $A(z):=P_{x}Q_{\Psi(z)}$ which verifies $A(0)=\mbox{Id}_n $ we can apply the formula of scalar curvature provided in Appendix \ref{APPPertMetric}. By noting that $D (x+  P_{x}z) $ is a diagonal matrix with diagonal coefficients $(1, e^{2f_2}, e^{2f_3},1, \ldots,1)$ where
\begin{eqnarray}\label{formfTn}
f_i(z) = h_i  \left(x+ P_{x}z\right) \qquad \forall i=2,3
\end{eqnarray}
for $z$ on a neighborhood of the origin $0$, (\ref{Scalhatg})-(\ref{ScalhatgE})  yield (we set $f_k=0$ for $k\neq 2,3$)
\begin{eqnarray}\label{ScalIdproof}
\mbox{Scal}^{\tilde{g}}_{\bar{x}}  = \mbox{Scal}^{\hat{g}}_{0} = \mbox{Scal}^{\bar{g}}_{0} + B^{\hat{g}}_{0} +  Q^{\hat{g}}_{0} +  E^{\hat{g}}_{0} = \mbox{Scal}^{\tilde{g}_0}_{x} + B^{\hat{g}}_{0} +  Q^{\hat{g}}_{0} +  E^{\hat{g}}_{0},
\end{eqnarray}
with
\begin{eqnarray}\label{Bghat}
B^{\hat{g}}_{0} = - 2\sum_{k=1, k\neq 2}^n \Bigl(  e^{-2f_k(0)} \partial_{k}^2 f_2(0)\Bigr) - 2\sum_{k=1, k\neq 3}^n \Bigl(  e^{-2f_k(0)} \partial_{k}^2 f_3(0)\Bigr),
\end{eqnarray}
\begin{multline}\label{ScalhatgQ}
Q^{\hat{g}}_{0} = - 2\sum_{k=1, k\neq 2}^n  \Bigl( e^{-2f_k(0)} \left(\partial_{k} f_2(0) \right)^2\Bigr) - 2\sum_{k=1, k\neq 3}^n  \Bigl( e^{-2f_k(0)} \left(\partial_{k} f_3(0) \right)^2 \Bigr)\\
- 2 \sum_{k=1, k\notin\{2,3\}}^n \Bigl(  \partial_k f_{2}(0)  \partial_{k} f_{3}(0)\Bigr) \\
 + 2  e^{-2f_2(0)} \partial_{2} f_{2}(0) \partial_2 f_{3} (0) + 2 e^{-2f_3(0)} \partial_{3} f_{3}(0) \partial_3 f_{2} (0),
\end{multline}
 and $E^{\hat{g}}$ is an error term that can be written as 
\begin{multline*}
E^{\hat{g}}_0 = \Phi_0 \left(f_2(0),f_3(0), \partial^1 A(0), \partial^2 A(0)\right) \\
+ \sum_{k=1}^n \Bigl(\Phi_{2,k} \left(f_2(0),f_3(0), \partial^1 A(0), \partial^2 A(0)\right) \left(\partial_kf_2(0)\right)\Bigr) \\
+ \sum_{k=1}^n \Bigl(\Phi_{3,k} \left(f_2(0),f_3(0), \partial^1 A(0), \partial^2 A(0)\right) \left(\partial_kf_3(0)\right)\Bigr),
\end{multline*}
where $ \Phi_{0}$ and $\Phi_{2,k}, \Phi_{3,k}$, with $k=1, \ldots,n$, are smooth functions in the variables $f_2(0), f_3(0)$, $\partial^1_k a_{ij}(0)$ and $\partial^2_{kl} a_{ij}(0)$ such that  $\Phi_0 (0,0,\cdot, \cdot)=0$. By (\ref{formfTn}), we have
\begin{eqnarray*}
\partial_k f_i(0) = d_xh_i \Big( X^k(x) \Big) \quad \mbox{and} \quad \partial_k^2 f_i(0) = d^2_xh_i \Big( X^k(x), X^k(x) \Big) \qquad \forall k=1, \ldots,n, \, \forall i=2,3. 
\end{eqnarray*}
Hence, by (\ref{5fev2}), for every $i=2,3$, we have
\[
\partial_1 f_i(0)  =  \alpha_x \partial_1 h_i(x) + \sum_{j=2}^n  \left(P_x\right)_{j,1} \partial_j h_i (x),
\]
\[
\partial_1^2 f_i(0) = \alpha_x^2 \partial^2_1 h_i (x) + 2 \alpha_x \sum_{j=2}^n  \left(P_x\right)_{j,1} \partial^2_{1j} h_i(x) +  \sum_{j,k=2}^n  \left(P_x\right)_{j,1} \left(P_x\right)_{k,1}  \partial^2_{jk} h_i(x),
\]
and  for every $i=2,3$ and $k=2, \ldots,n$,
\[
\partial_k f_i(0) = \sum_{j=2}^n  \left(P_x\right)_{j,k} \partial_j h_i(x), \quad 
\partial_k^2 f_i(0) =  \sum_{j,l=2}^n  \left(P_x\right)_{j,k} \left(P_x\right)_{l,k}  \partial^2_{jl} h_i(x).
\]
As a result, $B^{\hat{g}}_{0}$ in (\ref{Bghat}) can be expressed as 
\begin{multline*}
B^{\hat{g}}_0 = -2 \alpha_x^2 \Bigl( \partial_1^2 h_2(x) + \partial_1^2 h_3(x) \Bigr) -4 \alpha_x  \sum_{j=2}^n  \left(P_x\right)_{j,1} \Bigl( \partial^2_{1j} h_2(x) +  \partial^2_{1j} h_3(x)\Bigr) \\
- 2 \sum_{j,k=2}^n  \left(P_x\right)_{j,1}  \left(P_x\right)_{k,1} \Bigl( \partial^2_{jk} h_2(x) + \partial^2_{jk} h_3(x)\Bigr)\\
-2 \sum_{k=3}^n \sum_{j,l=2}^n \Bigl(  e^{-2f_k(0)}  \left(P_x\right)_{j,k} \left(P_x\right)_{l,k}  \partial^2_{jl} h_2(x) \Bigr) - 2\sum_{k=2, k\neq 3}^n \sum_{j,l=2}^n\Bigl(  e^{-2f_k(0)}  \left(P_x\right)_{j,k} \left(P_x\right)_{l,k}  \partial^2_{jl} h_3(x)\Bigr)
\end{multline*}
with the two last lines depend only on $\check{J}^2_h$ and $x$.  The term $Q^{\hat{g}}_{0}$ in (\ref{ScalhatgQ}) can be written as
\begin{multline*}
Q^{\hat{g}}_{0} = - 2 \alpha_x^2   \Bigl(  \left(\partial_{1} h_2(x) \right)^2 + \left(\partial_{1} h_3(x) \right)^2 +  \partial_1 h_{2}(x)  \partial_{1} h_{3}(x)\Bigr)\\
- 2 \alpha_x \left( \sum_{j=2}^n  \left(P_x\right)_{j,1}  \Bigl( 2 \partial_j h_2 (x) + \partial_j h_3 (x)\Bigr)\right) \partial_1h_2(x) \\
- 2 \alpha_x \left( \sum_{j=2}^n  \left(P_x\right)_{j,1}  \Bigl( \partial_j h_2 (x) + 2 \partial_j h_3 (x)\Bigr)\right) \partial_1h_3(x)\\
- 2 \left( \sum_{j=2}^n  \left(P_x\right)_{j,1} \partial_j h_2 (x)\right)\left( \sum_{j=2}^n  \left(P_x\right)_{j,1} \partial_j h_3 (x)\right)\\
 - 2 \left( \sum_{j=2}^n  \left(P_x\right)_{j,1} \partial_j h_2 (x)\right)^2 - 2 \left( \sum_{j=2}^n  \left(P_x\right)_{j,1} \partial_j h_3 (x)\right)^2\\
-2 \sum_{k=3}^n \left( e^{-2f_k(0)} \left(  \sum_{j=2}^n  \left(P_x\right)_{j,k} \partial_j h_2(x)\right)^2 \right) \\
-2 \sum_{k=2, k\neq 3}^n \left( e^{-2f_k(0)} \left(  \sum_{j=2}^n  \left(P_x\right)_{j,k} \partial_j h_3(x)\right)^2 \right)\\
-2  \sum_{k=4}^n  \left(  \sum_{j=2}^n  \left(P_x\right)_{j,k} \partial_j h_2(x)\right)  \left(  \sum_{j=2}^n  \left(P_x\right)_{j,k} \partial_j h_3(x)\right) \\
+ 2 e^{-2h_2(x)}  \left(  \sum_{j=2}^n  \left(P_x\right)_{j,2} \partial_j h_2(x)\right)  \left(  \sum_{j=2}^n  \left(P_x\right)_{j,2} \partial_j h_3(x)\right) \\
+ 2 e^{-2h_3(x)}  \left(  \sum_{j=2}^n  \left(P_x\right)_{j,3} \partial_j h_3(x)\right)  \left(  \sum_{j=2}^n  \left(P_x\right)_{j,3} \partial_j h_2(x)\right),
\end{multline*}
where the terms multiplying $\partial_1h_2(x)$ and $\partial_1h_3(x)$) appearing on the second and third lines, respectively, and all other remaining terms depend only on $\check{J}^2_h$ and $x$. Finally, the above formulas for $\partial_kf_2(0)$ and $\partial_kf_3(0)$ also allow to express $E_0^{\hat{g}}$ as desired. The conclusion readily follows from (\ref{ScalIdproof}).

\section{Proof of Lemma \ref{LEMToricDiscs}}\label{AppProofLEMToricDiscs}
For the proof, we introduce the following notation: for any set $S\subset \R^k$ and for any $\nu>0$, $S_{\nu}$ refers to the $\nu$-neighborhood of $S$ defined as $S_{\nu}:=S+\nu \mbox{Int}(D^k)$. \\
We prove the result by induction. For $k=2$, we define $\Sigma^2$ as the center of the disc $D^2$ and set $\tilde{\Sigma}^2:=\Sigma^2$. By construction, $\tilde{\Sigma}^2$ is a smooth  submanifold  of $\R^2$ with codimension $2$ that does not intersect $\partial D^2$. Consequently, $\tilde{\Sigma}^2$ is tranverse to $\partial D^2$, which proves (i). Next, let $\mathcal{N}$ be an open neighborhood of $\Sigma^2 \cup \partial D^2$. Then, there exists $\nu >0$ such that  $\Sigma^2_{\nu} \cup (\partial D^2)_{\nu} \subset \mathcal{N}$. The set $D^2 \setminus (\Sigma^2_{\nu/2}\cup   (\partial D^2)_{\nu/2}) $ is an annulus, diffeomorphic to $\S^1 \times [0,1]$, which proves (ii). 

Now, assume that the result holds for $k\geq 2$, and proceed to prove it for $k+1$. Set 
\[
\R^k_+ := \Bigl\{ \left(x_1, \cdots, x_k\right) \in \R^k \, \vert \, x_1 > 0\Bigr\},
\]
consider the smooth map 
\[
\begin{array}{rcl}
F \, : \, \S^1 \times \R^k_+ & \longrightarrow & \R^{k+1}\\
\left(\theta,\left(x_1, \cdots, x_k\right)\right) & \longmapsto & \left( x_1 \cos \theta, x_1 \sin \theta, x_2, \cdots, x_{k} \right),
\end{array}
\]
and define the sets $D^k_+\subset \R^k_+$ and $L^{k+1} \subset \R^{k+1}$ as follows:
\[
D_+^k := D^k \cap \R^k_+ \quad \mbox{and} \quad L^{k+1} := \Bigl\{ \left(y_1, \cdots, y_{k+1}\right) \in \R^{k+1} \, \vert \, y_1 = y_2= 0\Bigr\}.
\]
The map $F$ is a smooth diffeomorphism onto its image, and we have:
\begin{eqnarray}\label{LEMToricDiscsEQ1}
F \left( \S^1 \times D_+^k \right) = D^{k+1} \setminus L^{k+1}. 
\end{eqnarray}
By induction hypothesis, there exist sets $\Sigma^k\subset D^k$ and $\tilde{\Sigma}^k \subset \R^k$ satisfying properties (i)-(ii). Since $\tilde{\Sigma}^k$ has codimension $2$ and is transverse to the sphere $\partial D^k$, we may assume, without loss of generality, by rotating and shrinking $\tilde{\Sigma}^k$ outside $D^k$, if necessary, that there exists $\delta >0$ such that $\tilde{\Sigma}^k$ does not intersect the closed ball $\bar{B}^k(\bar{x},\delta)$ where $\bar{x}:=(-1, 0, \cdots, 0)$. For every $R>0$, let $x_R := (R, 0, \cdots, 0)$, and consider the affine diffeomorphism $\varphi_R: \R^k \rightarrow \R^k$ by $\varphi_R(x):= x_R + (R+1)x$. Using this map, define the sets:
\[
D_R^k:= \varphi_R\left( D^k\right), \quad 
\tilde{\Sigma}^k_R :=  \varphi_R\left( \tilde{\Sigma}^k\right), \quad 
\Sigma^k_R :=  \varphi_R\left( \Sigma^k\right), \quad 
\mathcal{B}_R := \varphi_R\left(\bar{B}^k(\bar{x},\delta) \right).
\]
By construction, properties (i)-(ii) are satisfied by replacing $D^k, \Sigma^k, \tilde{\Sigma}^k$ with $D^k_R, \Sigma^k_R, \tilde{\Sigma}^k_R$, the set $\mathcal{B}_R$ coincides with the closed ball $\bar{B}^k(\bar{x},(R+1)\delta)$, and $ \tilde{\Sigma}^k_R$ does not intersect $\mathcal{B}_R$.
Furthermore, there exists $\bar{R}>0$ such that for any $R \geq \bar{R}$, $D^k_R$ contains the closure of $D^k_+$, denoted $\overline{D^k_+}$, in its interior. Let $\hat{x} := (1/2, 0 \cdots, 0) \in D^k_+$, and for each $v\in \S^{k-1}$, let $\tau_+(v)$ (resp. $\tau_R(v)$ for $R\geq \bar{R}$) denote the length of the line segment formed by the intersection of the half-line $\hat{x} + [0,+\infty)v$ with $D_+^k$  (resp. $D_R^k$).
For each $R \geq \bar{R}$, the map $\tau_R :\S^{k-1} \rightarrow (0,+\infty)$ is smooth, while $\tau_+ :\S^{k-1} \rightarrow (0,+\infty)$ is Lipschitz and smooth outside the set of $v\in \S^{k-1}$ such that the half-line $\hat{x} + [0,+\infty)v$ intersects the sphere $\S^{k-1} \cap \{x_1=0\}$.
Moreover, we have $\tau_R > \tau_+$ for all $R \geq \bar{R}$. Choose a smooth, nondecreasing function $\mu: [0,+\infty) \rightarrow [0,1]$ such that $\mu(1/8)=0$ and $\mu(1/4)=1$. For every $R\geq \bar{R}$, define the map $\Psi_R : \R^k\rightarrow \R^k$ by
\[
\Psi_R(x) := \left\{
\begin{array}{ccl}
\left( 1 - \mu \left(\left| x-\hat{x}\right|\right) \right)\, x + \mu \left(\left| x-\hat{x}\right|\right)  \left[ \hat{x} + \frac{\tau_R\left( \frac{x-\hat{x}}{|x-\hat{x}|}\right)}{\tau_+\left( \frac{x-\hat{x}}{|x-\hat{x}|}\right)} \, \left( x-\hat{x} \right)\right] &\mbox{ if } & x \neq \hat{x} \\
\hat{x} & \mbox{ if } & x = \hat{x}.
\end{array}
\right.
\]
By construction, for every $R\geq \bar{R}$, $\Psi_R$ is a Lipschitz homeomorphism satisfying 
\[
\Psi_R \left( \overline{D^k_+}\right) =D^k_R.
\]
Furthermore, consider the compact set $S_R \subset D^k_R$, defined as the convex hull of the union of line segments formed by the intersection of $D_R^k$ with all half-lines of the form $\hat{x} + [0,+\infty)v$, with $v\in \S^{k-1}$, that intersect the sphere $\S^{k-1} \cap \{x_1=0\}$.
The map $\Psi_R^{-1}$ is smooth outside $S_R$ and any $y\in \partial D^k_R$ such that $\Psi_R^{-1}(y) \in \{x_1=0\}$ lies in $S_R$. Finally, note that, for every $R \geq \bar{R}$, the set $S_R$ is contained in the closed ball centered at $\bar{x}$ with radius $3$. 

To conclude the proof, we fix $R\geq \bar{R}$ such that $(R+1)\delta >3$, and define 
\[
\tilde{\Sigma}^k_+ := \Psi_R^{-1} \left( \tilde{\Sigma}^{k}_R\right).
\]
By construction, $\tilde{\Sigma}^{k}_R$ does not intersect the set $\mathcal{B}_R= \bar{B}^k(\bar{x},(R+1)\delta)$, which contains the set $S_R$, as $(R+1)\delta > 3$. Consequently, $\tilde{\Sigma}^k_+$ is an open smooth submanifold of $\R^k$ with codimension $2$. If it intersects the boundary of $D^k_+$, this occur at points in $\partial D^k \cap \R^k_+$, and the intersection is transverse. In particular, by shrinking $\tilde{\Sigma}^k$ outside $D^k$ if necessary, we may assume that $\tilde{\Sigma}^k_+ $ is contained in $\R^{k}_+$. Then, we define
\[
\Sigma^{k+1} := \tilde{\Sigma}^{k+1} \cap D^{k+1}, \quad \mbox{where} \quad \tilde{\Sigma}^{k+1} :=   L^{k+1}  \cup F\bigl( \S^1 \times \tilde{\Sigma}^k_+\bigr).
\]
We now address the verification of properties (i)-(ii) for $k+1$. The set $L^{k+1}$ is a codimension $2$ vector space that is transverse to $\partial D^{k+1}$. Additionally, $\tilde{\Sigma}^k_+ $ is an open smooth submanifold of $\R^k$ with codimension $2$, contained in $\R^{k}_+$, and transverse to $D^k_+$.
Since $F$ is a smooth diffeomorphism onto its image, the image of $\S^1 \times \tilde{\Sigma}^k_+$ under $F$ is an open smooth submanifold of $\R^{k+1}$ with codimension $2$, transverse to $\partial D^{k+1}$. Moreover, by (\ref{LEMToricDiscsEQ1}), this image does not intersect $L^{k+1}$, completing the proof of (i). To prove (ii), consider an open neighborhood $\mathcal{N}$ of $\Sigma^{k+1}\cup \partial D^{k+1}$. By the above construction, there exists an open neighborhood $\mathcal{N}^k_+$ of the closed set $\partial \overline{D^k_+}\cup (\tilde{\Sigma}^k_+ \cap D^{k})$ in $\R^k$ such that  
\[
F \bigl( \S^1 \times \left( \mathcal{N}^k_+\cap \R^k_+\right) \bigr) \subset \mathcal{N}
\]
and the set 
\[
\mathcal{N}_R^k := \Psi_R\left(\mathcal{N}^k_+\right)
\]
is an open neighborhood of $\partial D^k_R \cup \Sigma^k_R$. By the induction hypothesis, there exists a smooth map $\Phi^k_R: \T^{k-1} \times [0,1] \rightarrow \mbox{Int}(D^k_R\setminus \Sigma^k_R)$ such that $\Phi^k_R$ is a diffeomorphism onto its image and $\Phi^k_R( \T^{k-1} \times \{0,1\})\subset \mathcal{N}^k_R$. We conclude by approximating $\Psi^k_R$ with a smooth diffeomorphism via regularizing $\tau_+$. Since $\Phi^k_R$ is valued in $\mbox{Int}(D^k_R\setminus \Sigma^k_R)$ and $\Phi^k_R( \T^{k-1} \times \{0,1\})\subset \mathcal{N}^k_R$, there exists $\nu>0$ such that 
\begin{eqnarray}\label{LEMToricDiscsEQ2}
\mbox{dist} \left( \Psi_R^{-1} \left( \Phi^k_R(\alpha,t)\right), \partial D^k_+ \cup \left(\tilde{\Sigma}^k_+ \cap D^k_+\right)\right) > \nu \qquad \forall (\alpha,t) \in \T^{k-1} \times [0,1]
\end{eqnarray}
and
\begin{eqnarray}\label{LEMToricDiscsEQ3}
\Psi_R^{-1} \left( \Phi^k_R(\alpha,t)\right) \subset \left(\mathcal{N}_+^k\right)_{\nu} \qquad \forall (\alpha,t) \in \T^{k-1} \times \{0,1\}.
\end{eqnarray}
Next, consider a smooth approximation $\tau : \S^1 \rightarrow (0,+\infty)$  of $\tau_+$ in the $C^0$-topology, and define the map $\Psi : \R^k\rightarrow \R^k$ by
\[
\Psi(x) := \left\{
\begin{array}{ccl}
\left( 1 - \mu \left(\left| x-\hat{x}\right|\right) \right)\, x + \mu \left(\left| x-\hat{x}\right|\right)  \left[ \hat{x} + \frac{\tau_R\left( \frac{x-\hat{x}}{|x-\hat{x}|}\right)}{\tau\left( \frac{x-\hat{x}}{|x-\hat{x}|}\right)} \, \left( x-\hat{x} \right)\right] &\mbox{ if } & x \neq \hat{x} \\
\hat{x} & \mbox{ if } & x = \hat{x}.
\end{array}
\right.
\]
Since $\tau_R > \tau_+$ and $\Psi$ is defined radially, the map $\Psi$ is a smooth diffeomorphism if $\|\tau-\tau_+\|_{C^0}$ is sufficiently small. Furthermore, $\Psi^{-1}$ converges uniformly to $\Psi_R^{-1}$ on compact sets as $\|\tau-\tau_+\|_{C^0} \rightarrow 0$. 
Therefore, using the fact that $\Phi^k_R$ is valued in $\mbox{Int}(D^k_R\setminus \Sigma^k_R)$, along with (\ref{LEMToricDiscsEQ2})-(\ref{LEMToricDiscsEQ3}), we may assume that $\Psi$ is a smooth diffeomorphism satisfying the following properties:
\[
\Psi^{-1} \left( \Phi^k_R(\alpha,t)\right) \in  \mbox{Int}(D^k_+\setminus \left(\Sigma^k_+ \cap D_+^k\right)) \qquad \forall (\alpha,t) \in \T^{k-1} \times [0,1]
\]
and
\[
\Psi^{-1} \left( \Phi^k_R(\alpha,t)\right) \subset \mathcal{N}_+^k \qquad \forall (\alpha,t) \in \T^{k-1} \times \{0,1\}.
\]
Finally, noting that $\T^k=\S^1 \times \T^{k-1}$, we define $\Phi: \T^{k} \times [0,1]\rightarrow \mbox{Int}(D^{k+1}\setminus \Sigma^{k+1}) $ by 
\[
\Phi \left(\theta, \alpha, t \right) := F\left(\theta,\Psi^{-1}\left( \Phi^k_R (\alpha,t) \right)\right) \qquad \forall (\theta,\alpha,t )\in \S^1 \times \T^{k-1} \times [0,1].
\]
By construction, $\Phi$ is a smooth diffeomorphism onto its image, satisfying property (ii) for $k+1$.

%%%%%%%%%%%%%%%%%%%%%%%%%%% REFERENCES 
%%%%%%%%%%%%%%%%%%%%%%%%%%%
\addcontentsline{toc}{section}{References}

\end{document}